\newtheorem{thm}{Theorem}[section]
\newtheorem{cor}[thm]{Corollary}
\newtheorem{lem}[thm]{Lemma}
\newtheorem{prop}[thm]{Proposition}
\theoremstyle{definition}
\newtheorem{defn}[thm]{Definition}
\newtheorem{rem}[thm]{Remark}
\DeclareMathOperator{\Ker}{Ker}
\newcommand{\Q}{\mathbb Q}
\newcommand{\R}{\mathbb R}
\newcommand{\Z}{\mathbb Z}
\newcommand{\C}{\mathbb C}
\renewcommand{\Im}{\operatorname{Im}}
\newif\ifpdf \pdftrue
\begin{document}

\title{An arithmetic Lefschetz-Riemann-Roch theorem\\With an appendix by Xiaonan Ma}

\author{Shun Tang}

\date{}

\maketitle

\vspace{-10mm}

\hspace{5cm}\hrulefill\hspace{5.5cm} \vspace{5mm}

\textbf{Abstract.} In this article, we consider regular projective arithmetic schemes in the context of Arakelov geometry, any of which is endowed with an action of the diagonalisable group scheme associated to a finite cyclic group and with an equivariant very ample invertible sheaf. For any equivariant morphism between such arithmetic schemes, which is smooth over the generic fibre, we define a direct image map between corresponding higher equivariant arithmetic K-groups and we discuss its transitivity property. Then we use the localization sequence of higher arithmetic K-groups and the higher arithmetic concentration theorem developed in \cite{T3} to prove an arithmetic Lefschetz-Riemann-Roch theorem. This theorem can be viewed as a generalization, to the higher equivariant arithmetic K-theory, of the fixed point formula of Lefschetz type proved by K. K\"{o}hler and D. Roessler in \cite{KR1}.

\textbf{2010 Mathematics Subject Classification:} 14C40, 14G40, 14L30, 19E08, 58J52

\tableofcontents

\section{Introduction}
The aim of this article is to prove an arithmetic Riemann-Roch theorem of Lefschetz type for the higher equivariant arithmetic K-theory of regular arithmetic schemes in the context of Arakelov geometry. This theorem is an arithmetic analogue of a special case of K\"{o}ck's Lefschetz theorem in higher equivariant K-theory (cf. \cite{Ko1}), and it also generalizes K\"{o}hler-Roessler's Lefschetz fixed point formula \cite[Theorem 4.4]{KR1} to the case where higher arithmetic K-groups are concerned. To make things more explicit, let us first recall the study of such Lefschetz-Riemann-Roch problems.

Let $X$ be a smooth projective variety over an algebraically closed field $k$, and suppose that $X$ is endowed with an action of a cyclic group $\langle g\rangle$ of finite order $n$ such that $n$ is prime to the characteristic of $k$. A $\langle g\rangle$-equivariant coherent sheaf on $X$ is a coherent $\mathcal{O}_X$-module $F$ on $X$ together with an automorphism $\varphi: g^*F\to F$ such that $\varphi^n$ is equal to the identity map. Then the classical Lefschetz trace formula gives an expression of the alternating sum of the trace of $H^i(\varphi)$ on the cohomology space $H^i(X,F)$, as a sum of the contributions from the components of the fixed point subvariety $X_g$.
For $k=\mathbb{C}$, the field of complex numbers, such a Lefschetz trace formula was presented via index theory and topological K-theory in \cite[III]{ASe}. While for general $k$, a Grothendieck type generalization to the scheme theoretic algebraic geometry is very natural to expect. Precisely, denote by $K_0(X,g)$ the Grothendieck group of the category of equivariant locally free coherent sheaves on $X$, then $K_0({\rm Pt},g)$ is isomorphic to the group ring $\mathbb{Z}[g]\cong \mathbb{Z}[T]/{(1-T^n)}$ and $K_0(X,g)$ has a natural $K_0({\rm Pt},g)$-algebra structure (${\rm Pt}$ stands for the point ${\rm Spec(k)}$). Let $Y$ be another $\langle g\rangle$-equivariant smooth projective variety, let $f: X\to Y$ be a projective morphism compatible with both $\langle g\rangle$-actions on $X$ and on $Y$, then we have a direct image map $f_*: K_0(X,g)\to K_0(Y,g)$ given by
$$E\mapsto \sum_{i\geq 0}(-1)^iR^if_*(E).$$
Unsurprisingly, the direct image map $f_*$ doesn't commute with the restriction map $\tau: K_0(\cdot,g)\to K_0\big((\cdot)_g,g\big)$ from the equivariant $K_0$-group of an equivariant variety to the equivariant $K_0$-group of its fixed point subvariety. Namely, the restriction map $\tau$ is not a natural transformation between the covariant functors $K_0(\cdot,g)$ and $K_0\big((\cdot)_g,g\big)$. Like the other Riemann-Roch problems, the Lefschetz-Riemann-Roch theorem makes a correction of $\tau$ such that it becomes a natural transformation. In fact, for any $\langle g\rangle$-equivariant smooth projective variety $X$, let $N_{X/{X_g}}$ stand for the normal bundle associated to the regular immersion $X_g\hookrightarrow X$ and let $\lambda_{-1}(N^\vee_{X/{X_g}})$ be the alternating sum $\sum(-1)^j\wedge^jN^\vee_{X/{X_g}}$, then $\lambda_{-1}(N^\vee_{X/{X_g}})$ is an invertible element in $K_0(X_g,g)\otimes_{\mathbb{Z}[g]}\mathcal{R}$ where $\mathcal{R}$ is any $\mathbb{Z}[g]$-algebra in which $1-T^k$ is invertible for $k=1, \ldots, n-1$. We formally define $L_X: K_0(X,g)\to K_0(X_g,g)\otimes_{\mathbb{Z}[g]}\mathcal{R}$ as $\lambda^{-1}_{-1}(N^\vee_{X/{X_g}})\cdot\tau$, the Lefschetz-Riemann-Roch theorem reads: the following diagram
\begin{align}\label{c1}
\xymatrix{ K_0(X,g) \ar[r]^-{L_X} \ar[d]^-{f_*} & K_0(X_g,g)\otimes_{\mathbb{Z}[g]}\mathcal{R} \ar[d]^-{{f_g}_*} \\
K_0(Y,g) \ar[r]^-{L_Y} & K_0(Y_g,g)\otimes_{\mathbb{Z}[g]}\mathcal{R}}
\end{align}
is commutative.

This commutative diagram (\ref{c1}) was presented by P. Donovan in \cite{Do}, and later it was generalized to singular varieties by P. Baum, W. Fulton and G. Quart in \cite{BFQ}. Notice that the settings in \cite{Do} and in \cite{BFQ} are more general than that in this introduction. The reasoning in the first paper runs similarly to the technique used in Borel-Serre's paper \cite{BS}, while the reasoning in the second paper relies on the deformation to the normal cone construction. These two processes are both traditional for producing the Grothendieck type Riemann-Roch theorem.

After Quillen and other mathematicians' work, algebraic K-groups are extended to higher degrees and the higher (equivariant) algebraic K-groups of $X$ are defined as the higher homotopy groups of the K-theory space associated to the category of (equivariant) locally free coherent sheaves on $X$. There are many methods to construct this ``K-theory space", but no matter which construction we choose, the tensor product of locally free coherent sheaves always induces a graded ring structure on $K_\bullet(X,g)$. In particular, each $K_m(X,g)$ is a $K_0(X,g)$-module. Moreover, the functor $K_\bullet(\cdot,g)$ is again covariant with respect to equivariant proper morphisms. Then, for any $m\geq 1$, the following diagram for higher algebraic K-groups which is similar to (\ref{c1}) does make sense:
\begin{align}\label{c2}
\xymatrix{ K_m(X,g) \ar[r]^-{L_X} \ar[d]^-{f_*} & K_m(X_g,g)\otimes_{\mathbb{Z}[g]}\mathcal{R} \ar[d]^-{{f_g}_*} \\
K_m(Y,g) \ar[r]^-{L_Y} & K_m(Y_g,g)\otimes_{\mathbb{Z}[g]}\mathcal{R}.}
\end{align}

The commutativity of diagram (\ref{c2}), which is named the Lefschetz-Riemann-Roch theorem for higher equivariant algebraic K-theory, was proved by B. K\"{o}ck in \cite{Ko1}. The main ingredient is an excess intersection formula whose proof also relies on the deformation to the normal cone construction. Moreover, it's worth indicating that the commutative diagram (\ref{c2}), combined with the Gillet's Riemann-Roch theorem for higher algebraic K-theory (cf. \cite{Gi}), implies a higher Lefschetz trace formula.

In the field of arithmetic geometry, one considers those noetherian and separated schemes $f: X\to {\rm Spec}(\mathbb{Z})$ over the ring of integers (actually over any excellent regular noetherian domain). In this context, it is possible to produce an analogue of the commutative diagram (\ref{c1}), by endowing $X$ with an action of the diagonalisable group scheme $\mu_n={\rm Spec}(\mathbb{Z}[\mathbb{Z}/{n\mathbb{Z}}])$ of $n$-th roots of unity rather than with the action of an automorphism of order $n$. Here, a $\mu_n$-action on $X$ is a morphism of schemes $m_X: \mu_n\times X\to X$ which satisfies the usual associativity property. The reason for this choice is that the fixed point subscheme $X_{\mu_n}$ of a regular scheme $X$ equipped with an action of $\mu_n$ is still regular and the natural inclusion $i_X: X_{\mu_n}\hookrightarrow X$ is a regular immersion, while the fixed point subscheme of a regular scheme under an automorphism of order $n$ can be very singular over the fibres lying above the primes dividing $n$. By a $\mu_n$-equivariant coherent sheaf $F$ on $X$, we understand a coherent $\mathcal{O}_X$-module $F$ together with an isomorphism 
$$m_F: m_X^*F\to {\rm pr}_X^*F$$
of $\mathcal{O}_{\mu_n\times X}$-modules which satisfies the following associativity property:
$$({\rm pr}_{2,3}^*m_F)\circ\big((1\times m_X)^*m_F\big)=(m_{\mu_n}\times 1)^*m_F.$$
Here, $m_{\mu_n}$ denotes the multiplication $\mu_n\times \mu_n\to \mu_n$, ${\rm pr}_X: \mu_n\times X\to X$ and ${\rm pr}_{2,3}: \mu_n\times \mu_n\times X\to \mu_n\times X$ denote the obvious projections. Under this situation, Baum-Fulton-Quart's method still works, so that the commutative diagram (\ref{c1}) holds for regular $\mu_n$-equivariant schemes over $\mathbb{Z}$.

In \cite{Th}, R. W. Thomason used another way to do the same thing and he even got a generalization of the commutative diagram (\ref{c2}) for regular $\mu_n$-equivariant schemes. Thomason's strategy was to use Quillen's localization sequence for higher equivariant algebraic K-groups to show a concentration theorem. This theorem states that, after a suitable localization, the equivariant algebraic K-group $K_m(X_{\mu_n},\mu_n)_\rho$ is isomorphic to $K_m(X,\mu_n)_\rho$ for any
$m\geq 0$, and the inverse map is exactly given by $\lambda^{-1}_{-1}(N^\vee_{X/{X_{\mu_n}}})\cdot i_X^*$. Here, $\rho$ is any prime ideal in $R(\mu_n):=K_0({\rm Spec}\mathbb{Z},\mu_n)\cong \Z[T]/{(1-T^n)}$ which doesn't contain the elements $1-T^k$ for $k=1,\ldots,n-1$. For instance, $\rho$ can be chosen to be the kernel of the natural morphism $\Z[T]/{(1-T^n)}\to \Z[T]/{(\Phi_n)}$ where $\Phi_n$ stands for the $n$-th cyclotomic polynomial. Then the Lefschetz-Riemann-Roch theorem for regular $\mu_n$-equivariant schemes
\begin{align}\label{c3}
\xymatrix{ K_m(X,\mu_n) \ar[r]^-{L_X} \ar[d]^-{f_*} & K_m(X_{\mu_n},\mu_n)_\rho \ar[d]^-{{f_{\mu_n}}_*} \\
K_m(Y,\mu_n) \ar[r]^-{L_Y} & K_m(Y_{\mu_n},\mu_n)_\rho}
\end{align}
follows from the covariant functoriality of $K_\bullet(\cdot,\mu_n)$ with respect to proper morphisms.

Now, let us turn to Arakelov geometry. Let $X$ be an arithmetic scheme over an arithmetic ring $(D, \Sigma, F_\infty)$ in the sense of Gillet-Soul\'{e} (cf. \cite{GS1}), then $X$ is quasi-projective over $D$ with smooth generic fibre. We denote $\mu_n:={\rm Spec}(D[\Z/{n\Z}])$ the diagonalisable group scheme over $D$ associated to a cyclic group $\Z/{n\Z}$. By saying $X$ is $\mu_n$-projective, we understand that $X$ is endowed with a projective $\mu_n$-action. That means $X$ is projective and there exists a very ample invertible $\mu_n$-sheaf on $X$. 

For each regular $\mu_n$-projective arithmetic scheme $X$, K. K\"{o}hler and D. Roessler have defined an equivariant arithmetic K$_0$-group
$\widehat{K_0}(X,\mu_n)$ in \cite{KR1}. This arithmetic $\text{K}_0$-group is a modified Grothendieck group of the category of equivariant hermitian vector bundles on $X$, it contains some smooth form class on $X_{\mu_n}(\C)$ as analytic datum. The same as the algebraic $\text{K}_0$-group $K_0(X,\mu_n)$, $\widehat{K_0}(X,\mu_n)$ has a ring structure and it is an $R(\mu_n)$-algebra. Moreover, direct image maps between equivariant arithmetic K$_0$-groups can be defined for an equivariant morphism which is smooth over the generic fibre, by using Bismut-K\"{o}hler-Ma's analytic torsion forms. Choose a K\"{a}hler metric for $X(\C)$, and let $\overline{N}_{X/{X_{\mu_n}}}$ be the normal bundle endowed with the quotient metric, then the main theorem in \cite{KR1} reads: the element
$\lambda_{-1}(\overline{N}_{X/{X_{\mu_n}}}^\vee)$ is a unit in
$\widehat{K_0}(X_{\mu_n},\mu_n)_\rho$ and the following diagram
\begin{align}\label{c4}
\xymatrix{
\widehat{K_0}(X,\mu_n) \ar[rr]^-{\Lambda_R\cdot \tau} \ar[d]_{f_*} && \widehat{K_0}(X_{\mu_n},\mu_n)_\rho \ar[d]^{{f_{\mu_n}}_*} \\
 \widehat{K_0}(D,\mu_n) \ar[rr]^-{\tau} && \widehat{K_0}(D,\mu_n)_\rho}
\end{align}
is commutative, where $\rho$ is any prime ideal in $R(\mu_n)$ which doesn't contain the elements $1-T^k$ for $k=1,\ldots,n-1$, $\Lambda_R$ is defined as $\big(1-R_g(N_{X/{X_{\mu_n}}})\big)\cdot\lambda_{-1}^{-1}(\overline{N}_{X/{X_{\mu_n}}}^\vee)$, and $R_g(\cdot)$ is the equivariant $R$-genus due to Bismut (see below).

Later, two refinements of (\ref{c4}) were presented by the author in \cite{T1} and in \cite{T2} respectively. In \cite{T1}, $D$ was replaced by a general regular $\mu_n$-projective scheme $Y$. In \cite{T2}, $X$ was allowed to have singularities on its finite fibres. The aim of this article is to show an arakelovian analogue of a special case of (\ref{c3}), in which the higher equivariant algebraic K-groups are replaced by the higher equivariant arithmetic K-groups. Hence, our work is a generalization of K\"{o}hler-Roessler's Lefschetz fixed point formula to the higher equivariant arithmetic K-theory.

Let us describe the main result more precisely. Firstly, notice that we have constructed a group endomorphism
$\otimes\lambda_{-1}(\overline{N}_{X/X_{\mu_n}}^\vee): \widehat{K}_m(X_{\mu_n},\mu_n)\to \widehat{K}_m(X_{\mu_n},\mu_n)$ and its formal inverse $\otimes\lambda^{-1}_{-1}(\overline{N}_{X/X_{\mu_n}}^\vee): \widehat{K}_m(X_{\mu_n},\mu_n)_\rho\to \widehat{K}_m(X_{\mu_n},\mu_n)_\rho$ in \cite[Section 5]{T3}. As what we stated before, $\rho$ is any prime ideal in $R(\mu_n):=K_0({\rm Spec}\mathbb{Z},\mu_n)\cong \Z[T]/{(1-T^n)}$ which doesn't contain the elements $1-T^k$ for $k=1,\ldots,n-1$. For instance, $\rho$ can be chosen to be the kernel of the natural morphism $\Z[T]/{(1-T^n)}\to \Z[T]/{(\Phi_n)}$ where $\Phi_n$ stands for the $n$-th cyclotomic polynomial. In this article, we shall further construct a group endomorphism $R_g(\overline{N}_{X/X_{\mu_n}}): \widehat{K}_m(X_{\mu_n},\mu_n)\to \widehat{K}_m(X_{\mu_n},\mu_n)$ and we shall prove that this endomorphism $R_g(\overline{N}_{X/X_{\mu_n}})$ is independent of the choice of the metric over $N_{X/X_{\mu_n}}$ after tensoring by $\Q$. So the expression $\Lambda_R=\big(1-R_g(N_{X/{X_{\mu_n}}})\big)\cdot\lambda_{-1}^{-1}(\overline{N}_{X/{X_{\mu_n}}}^\vee)$ still makes sense as an endomorphism of $\widehat{K}_m(X_{\mu_n},\mu_n)_\rho\otimes\Q$. Moreover, for any equivariant morphism $f: X\to Y$ between regular $\mu_n$-projective arithmetic schemes, which is smooth over the generic fibre, we shall prove that there exists a reasonable direct image map $f_*: \widehat{K}_m(X,\mu_n)\to \widehat{K}_m(Y,\mu_n)$ with $m\geq1$ and we discuss the transitivity property of the direct image maps up to torsion. Assume that the $\mu_n$-action on $Y$ is trivial and still use the notation $\tau$ to denote the morphism 
\begin{align*}
\widehat{K}_m\big((\cdot),\mu_n\big)&\to \widehat{K}_m\big((\cdot)_{\mu_n},\mu_n\big)_\rho\otimes\Q\\
x&\mapsto \tau(x)\otimes 1,
\end{align*}
Our main theorem reads: the following diagram
\begin{align}\label{c5}
\xymatrix{
\widehat{K}_m(X,\mu_n) \ar[rr]^-{\Lambda_R\cdot \tau} \ar[d]_{f_*} && \widehat{K}_m(X_{\mu_n},\mu_n)_\rho\otimes \Q \ar[d]^{{f_{\mu_n}}_*} \\
\widehat{K}_m(Y,\mu_n) \ar[rr]^-{\tau} && \widehat{K}_m(Y,\mu_n)_\rho\otimes\Q}
\end{align}
is commutative. In such a formulation, the equivariant $R$-genus again plays a crucial role.

To this aim, the definition of higher equivariant arithmetic K-groups and some reasonable technique that can be carried out for higher equivariant arithmetic K-theory should be clarified. We have settled these in \cite{T3}. In fact, we have defined the higher equivariant arithmetic K-groups via the simplicial description of the Beilinson's regulators (cf. \cite{BW}) and we have developed a localization sequence as well as an arithmetic concentration theorem. So, principally, we shall follow Thomason's approach to prove the commutativity of (\ref{c5}), but the fact that the direct image maps are only defined for the morphisms which are smooth over the generic fibres will lead to a big gap comparing with the purely algebraic case. Some highly non-trivial analytic machinery should be involved, such as the transitivity property of analytic torsion forms and the Bismut-Ma's immersion formula.

The K\"{o}hler-Roessler's arithmetic Lefschetz fixed point formula has fruitful applications in number theory and in arithmetic geometry. One important reason is that the equivariant $R$-genus is closely related to the logarithmic derivative of certain $L$-functions. K\"{o}hler-Roessler and Maillot-Roessler have shown in \cite{KR2} and in \cite{MR1} that the Faltings heights and the periods of C.M. abelian varieties can be expressed as a formula in terms of the special value of logarithmic derivative of $L$-functions at $0$. Further, in \cite{MR2}, Maillot-Roessler presented a series of conjectures about the relation between several invariants of arithmetic varieties and the special values of logarithmic derivative of Artin $L$-functions at negative integers. We hope that our Lefschetz-Riemann-Roch theorem for higher equivariant arithmetic K-groups would be helpful to understand these conjectures.

The structure of this article is as follows. In Section 2, we define the direct image maps between higher equivariant arithmetic K-groups. As an opportunity, we recall the analytic torsion for cubes of hermitian vector bundles introduced by D. Roessler in \cite{Roe}, actually our construction is slightly different to Roessler's construction. In Section 3, we discuss certain transitivity property of the direct image maps, the relation of equivariant analytic torsion forms with respect to families of submersions will be presented. In Section 4, we formulate and prove the commutativity of the diagram (\ref{c5}), an accurate computation via the deformation to the normal cone construction is given. In the last section, Section 5, we attach an appendix on some properties of equivariant analytic torsion forms and immersion formula. These purely analytic properties are crucial for the main arguments in this article, the author is very grateful to Prof. Ma Xiaonan for writing this appendix.

\section{Higher equivariant arithmetic K-theory}

\subsection{Bott-Chern forms and arithmetic K-groups}
Suggested by Soul\'{e} (cf. \cite{So}), and also by Deligne (cf. \cite{De}), the higher arithmetic K-groups of an arithmetic scheme $X$ can be defined as the homotopy groups of the homotopy fibre of Beilinson's regulator map so that one obtains a long exact sequence
$$\xymatrix{\cdots \ar[r] & \widehat{K}_m(X) \ar[r] & K_m(X) \ar[r]^-{{\rm ch}} & \bigoplus_{p\geq 0}H_{\mathcal{D}}^{2p-m}\big(X,\R(p)\big) \ar[r] & \widehat{K}_{m-1}(X) \ar[r] & \cdots,}$$
where $H_{\mathcal{D}}^{*}\big(X,\R(p)\big)$ is the real Deligne-Beilinson cohomology and ${\rm ch}$ is the Beilinson's regulator map. In order to do this, a simplicial description of Beilinson's regulator map is necessary. In \cite{BW}, such a simplicial description was given by Burgos and Wang by using the higher Bott-Chern forms. Recently, in \cite{T3}, we followed Burgos-Wang's approach to define the higher equivariant Bott-Chern forms and further the higher equivariant arithmetic K-theory. In this subsection, we shall recall some relevant constructions and definitions, for more details the reader is referred to \cite{BW} and \cite{T3}.

At first, let $X$ be a smooth algebraic variety over $\mathbb{C}$. In this subsection, we shall work with the analytic topology of $X$. Denote by $E_{\log}^*(X)$ the complex of differential forms on $X$ with logarithmic singularities along infinity (cf. \cite[Definition 2.1]{T3}), then $E_{\log}^*(X)$ has a natural bigrading $E_{\log}^n(X)=\bigoplus_{\stackrel{p+q=n}{}}E_{\log}^{p,q}(X)$ and this grading induces a Hodge filtration $F^pE_{\log}^n(X)=\bigoplus_{\stackrel{p'\geq p}{p'+q'=n}}E_{\log}^{p',q'}(X)$. Write $E_{\log,\R}^*(X,p):=(2\pi i)^pE_{\log,\R}^*(X)$ with $E_{\log,\R}^*(X)$ the subcomplex of $E_{\log}^*(X)$ consisting of real forms, then we have a decomposition $E_{\log}^*(X)=E_{\log,\R}^*(X,p)\oplus E_{\log,\R}^*(X,p-1)$ and the projection $\pi_p: E_{\log}^*(X)\to E_{\log,\R}^*(X,p)$ is given by $\pi_p(x)=\frac{1}{2}\big(x+(-1)^p\overline{x}\big)$. Moreover, for any $x\in E_{\log}^n(X)$, we define two filtered functions $$F^{k,k}x=\sum_{\stackrel{l\geq k,l'\geq k}{}}x^{l,l'}\quad\text{and}\quad F^{k}x=\sum_{\stackrel{l\geq k}{}}x^{l,l'}.$$ Then we set $\pi(x):=\pi_{p-1}(F^{n-p+1,n-p+1}x)$.

The main result in \cite[Section 2]{Bu1} states that the following Deligne complex
$$\mathfrak{D}^n\big(E_{\log}(X),p\big)=\left\{
      \begin{array}{ll}
        E_{\log,\R}^{n-1}(X,p-1)\bigcap\bigoplus_{\stackrel{p'+q'=n-1}{p'<p,q'<p}}E_{\log}^{p',q'}(X), & n<2p; \\
        E_{\log,\R}^{n}(X,p)\bigcap\bigoplus_{\stackrel{p'+q'=n}{p'\geq p,q'\geq p}}E_{\log}^{p',q'}(X), & n\geq 2p,
      \end{array}
    \right.$$
with differential
$$d_\mathcal{D}x=\left\{
                   \begin{array}{ll}
                     -\pi(dx), & n<2p-1; \\
                     -2\partial\overline{\partial}x, & n=2p-1; \\
                     dx, & n>2p-1.
                   \end{array}
                 \right.$$
computes the real Deligne-Beilinson cohomology of $X$. Namely, one has $$H_{\mathcal{D}}^n\big(X,\R(p)\big)=H^n\Big(\mathfrak{D}^*\big(E_{\log}(X),p\big)\Big).$$ We shall write $D^*(X,p):=\mathfrak{D}^*\big(E_{\log}(X),p\big)$ for short.

\begin{rem}\label{explain_d}
(i). According to the definition, the real Deligne-Beilinson cohomology of $X$ at degrees $2p$ and $2p-1$ are given by
$$H^{2p}\Big(\mathfrak{D}^*\big(E_{\log}(X),p\big)\Big)=\{x\in E_{\log}^{p,p}(X)\cap E_{\log,\R}^{2p}(X,p)\mid dx=0\}/{\Im(\partial\overline{\partial})}$$
and
$$H^{2p-1}\Big(\mathfrak{D}^*\big(E_{\log}(X),p\big)\Big)=\{x\in E_{\log}^{p-1,p-1}(X)\cap E_{\log,\R}^{2p-2}(X,p-1)\mid \partial\overline{\partial}x=0\}/{(\Im\partial+\Im\overline{\partial})}.$$

(ii). Let $x\in D^n(X,p)$ and $y\in D^m(X,q)$, we write $l=n+m$ and $r=p+q$. Then
$$x\bullet y=\left\{
               \begin{array}{ll}
                 (-1)^nr_p(x)\wedge y+x\wedge r_q(y), & n<2p, m<2q; \\
                 \pi(x\wedge y), & n<2p, m\geq 2q, l<2r; \\
                 F^{r,r}\big(r_p(x)\wedge y\big)+2\pi_r\partial\big((x\wedge y)^{r-1,l-r}\big), & n<2p, m\geq 2q, l\geq 2r; \\
                 x\wedge y, & n\geq 2p, m\geq 2q.
               \end{array}
             \right.$$
induces a product on $\bigoplus_p D^*(X,p)$ which is graded commutative and is associative up to chain homotopy. Here $r_px=2\pi_p(F^p dx)$ if $n\leq 2p-1$ and $r_px=x$ otherwise. At the level of cohomology groups, this product coincides with the product defined by Beilinson. Notice that if $x\in D^{2p}(X,p)$ is a cocycle, then for all $y,z$ we have $x\bullet y=y\bullet x$ and $y\bullet(x\bullet z)=(y\bullet x)\bullet z=x\bullet(y\bullet z)$.
\end{rem}

In order to introduce the higher Bott-Chern form, let us construct a new complex $\widetilde{D}^*(X,p)$ using the cocubical structure of the cartesian product of projective lines $(\mathbb{P}^1)^\cdot$. This complex $\widetilde{D}^*(X,p)$ has the same cohomology groups as $D^*(X,p)$. Firstly one notices that $D^*(X\times(\mathbb{P}^1)^\cdot,p)$ form a cubical complex with face and degeneracy maps
$$d_i^j=({\rm Id}\times d_j^i)^*\quad\text{and}\quad s_i=({\rm Id}\times s^i)^*,$$
where $$d_j^i: (\mathbb{P}^1)^k\to (\mathbb{P}^1)^{k+1},\quad i=1,\cdots,k,j=0,1,$$
$$s^i: (\mathbb{P}^1)^k\to (\mathbb{P}^1)^{k-1},\quad i=1,\cdots,k,$$
which are given by
$$d_0^i(x_1,\cdots,x_k)=(x_1,\cdots,x_{i-1},(0:1),x_i,\cdots,x_k),$$
$$d_1^i(x_1,\cdots,x_k)=(x_1,\cdots,x_{i-1},(1:0),x_i,\cdots,x_k),$$
$$s^i(x_1,\cdots,x_k)=(x_1,\cdots,x_{i-1},x_{i+1},\cdots,x_k)$$
are the coface and the codegeneracy maps of $(\mathbb{P}^1)^\cdot$. Then we write $D_{\mathbb{P}}^{r,k}(X,p)=D^r(X\times (\mathbb{P}^1)^{-k},p)$ and denote by $D_{\mathbb{P}}^{*,*}(X,p)$ the associated double complex with differentials
$$d'=d_{\mathcal{D}}\quad\text{and}\quad d''=\sum(-1)^{i+j-1}d_i^j.$$
Next, let $(x:y)$ be the homogeneous coordinates of $\mathbb{P}^1$, and let $\omega=\partial\overline{\partial}\log\frac{x\overline{x}+y\overline{y}}{x\overline{x}}\in (2\pi i)E_{\mathbb{P}^1,\R}^2$ be a K\"{a}hler form over $\mathbb{P}^1$. We shall write $\omega_i=p_i^*\omega\in E_{\log}^*(X\times(\mathbb{P}^1)^k)$ where $p_i: X\times(\mathbb{P}^1)^k\to \mathbb{P}^1, i=1,\cdots,k$ is the projection over the $i$-th projective line. The complex $\widetilde{D}^*(X,p)$ is constructed by killing the degenerate classes and the classes coming from the projective spaces.

\begin{defn}\label{201}
We define $\widetilde{D}^*(X,p)$ as the associated simple complex of the double complex $\widetilde{D}^{*,*}(X,p)$ which is given by $$\widetilde{D}^{r,k}(X,p)=D_{\mathbb{P}}^{r,k}(X,p)/{\sum_{i=1}^{-k}s_i\big(D_{\mathbb{P}}^{r,k+1}(X,p)\big)\oplus
\omega_i\wedge s_i\big(D_{\mathbb{P}}^{r-2,k+1}(X,p-1)\big)}.$$
The differential of this complex will be denoted by $d$.
\end{defn}

A repetition of the proofs of \cite[Proposition 1.2 and Lemma 1.3]{BW} gives that the natural morphism of complexes $$\iota: D^*(X,p)=\widetilde{D}^{*,0}(X,p)\to \widetilde{D}^*(X,p)$$ is a quasi-isomorphism.

Now, let $X$ be a smooth $\mu_n$-projective variety over $\mathbb{C}$ and denote by $\mathcal{U}:=\widehat{\mathcal{P}}(X,\mu_n)$ the exact category of $\mu_n$-equivariant vector bundles on $X$ equipped with $\mu_n$-invariant smooth hermitian metrics. We consider the exact cubes in the category $\mathcal{U}$. By definition, an exact $k$-cube in $\mathcal{U}$ is a functor $\mathcal{F}$ from $\langle-1,0,1\rangle^k$, the $k$-th power of the ordered set $\langle-1,0,1\rangle$, to $\mathcal{U}$ such that for any $\alpha\in \langle-1,0,1\rangle^{k-1}$ and $1\leq i \leq k$, the $1$-cube $\partial_i^\alpha$ defined by $$\mathcal{F}_{\alpha_1,\cdots,\alpha_{i-1},-1,\alpha_i,\cdots,\alpha_{k-1}}\to \mathcal{F}_{\alpha_1,\cdots,\alpha_{i-1},0,\alpha_i,\cdots,\alpha_{k-1}}\to \mathcal{F}_{\alpha_1,\cdots,\alpha_{i-1},1,\alpha_i,\cdots,\alpha_{k-1}}$$ which is called an edge of $\mathcal{F}$ is a short exact sequence. From now on, we shall write cubes instead of exact cubes for short. Let $\mathcal{F}$ be a $k$-cube in $\mathcal{U}$, for $1\leq i\leq k$ and $j\in \langle-1,0,1\rangle$, the $(k-1)$-cube $\partial_i^j\mathcal{F}$ defined by $(\partial_i^j\mathcal{F})_{\alpha_1,\cdots,\alpha_{k-1}}=\mathcal{F}_{\alpha_1,\cdots,\alpha_{i-1},j,\alpha_i,\cdots,\alpha_{k-1}}$ is called a face of $\mathcal{F}$. On the other hand, for any $1\leq i\leq k+1$, we denote by $S_i^1\mathcal{F}$ the $(k+1)$-cube
$$(S_i^1\mathcal{F})_{\alpha_1,\cdots,\alpha_{k+1}}=\left\{
                                                      \begin{array}{ll}
                                                        0, & \alpha_i=1; \\
                                                        \mathcal{F}_{\alpha_1,\cdots,\alpha_{i-1},\alpha_{i+1},\cdots,\alpha_{k+1}}, & \alpha_i\neq1,
                                                      \end{array}
                                                    \right.$$
such that the morphisms $(S_i^1\mathcal{F})_{\alpha_1,\cdots,\alpha_{i-1},-1,\alpha_{i+1},\cdots,\alpha_{k+1}}\to (S_i^1\mathcal{F})_{\alpha_1,\cdots,\alpha_{i-1},0,\alpha_{i+1},\cdots,\alpha_{k+1}}$ are the identities of $(S_i^1\mathcal{F})_{\alpha_1,\cdots,\alpha_{i-1},\alpha_{i+1},\cdots,\alpha_{k+1}}$. Similarly, we have $(k+1)$-cube $S_i^{-1}\mathcal{F}$.

Denote by $C_k\mathcal{U}$ the set of all $k$-cubes in $\mathcal{U}$, then we have the face maps $\partial_i^j: C_k\mathcal{U}\to C_{k-1}\mathcal{U}$ and the degeneracy maps $S_i^j: C_k\mathcal{U}\to C_{k+1}\mathcal{U}$. The cubes in the image of $S_i^j$ are said to be degenerate. Let $\Z C_k\mathcal{U}$ be the free abelian group generated by $C_k\mathcal{U}$ and $D_k$ be the subgroup of $\Z C_k{\mathcal{U}}$ generated by all degenerate $k$-cubes. Set $\widetilde{\Z}C_k\mathcal{U}=\Z C_k\mathcal{U}/{D_k}$ and
$$d=\sum_{i=1}^k\sum_{j=-1}^1(-1)^{i+j-1}\partial_i^j: \widetilde{\Z}C_k\mathcal{U}\to \widetilde{\Z}C_{k-1}\mathcal{U}.$$
Then $\widetilde{\Z}C_*\mathcal{U}=(\widetilde{\Z}C_k\mathcal{U},d)$ is a homological complex.

Assume that $\overline{E}$ is a hermitian $k$-cube in the category $\mathcal{U}=\widehat{\mathcal{P}}(X,\mu_n)$. If $\overline{E}$ is an emi-cube, namely the metrics on the quotient terms in all edges of $\overline{E}$ are induced by the metrics on the middle terms (cf. \cite[Definition 3.5]{BW}), one can follow \cite[(3.7)]{BW} to associate a hermitian locally free sheaf ${\rm tr}_k(\overline{E})$ on $X\times (\mathbb{P}^1)^k$. This ${\rm tr}_k(\overline{E})$ is called the $k$-transgression bundle of $\overline{E}$. If $k=1$, as an emi-$1$-cube, $\overline{E}$ is a short exact sequence
$$\xymatrix{ 0\ar[r] & \overline{E}_{-1}\ar[r]^-{i} & \overline{E}_0\ar[r] & \overline{E}_1 \ar[r] & 0},$$
where the metric of $\overline{E}_1$ is induced by the metric of $\overline{E}_0$. Then ${\rm tr}_1(\overline{E})$ is the cokernel with quotient metric of the map $E_{-1}\to E_{-1}\otimes\mathcal{O}(1)\oplus E_0\otimes\mathcal{O}(1)$ by the rule
$e_{-1}\mapsto e_{-1}\otimes \sigma_{\infty}\oplus i(e_{-1})\otimes \sigma_0$.
Here $\sigma_0$ (resp. $\sigma_{\infty}$) is the section of the tautological bundle $\mathcal{O}(1)$ on $\mathbb{P}^1$ which vanishes only at $0$ (resp. $\infty$), and $\mathcal{O}(1)$ is endowed with the Fubini-Study metric. If $k>1$, suppose that the transgression bundle is defined for $k-1$. Let ${\rm tr}_1(\overline{E})$ be the emi-$(k-1)$-cube over $X\times \mathbb{P}^1$ given by ${\rm tr}_1(\overline{E})_\alpha={\rm tr}_1\big(\partial^\alpha_1(\overline{E})\big)$, then ${\rm tr}_k(\overline{E})$ is defined as ${\rm tr}_{k-1}\big({\rm tr}_1(\overline{E})\big)$.

Moreover, according to \cite[Proposition 3.6]{BW}, for any hermitian cube $\overline{E}$ in the category $\mathcal{U}$, there is a unique way to change the metrics on $E_\alpha$ for $\alpha\nleq0$ such that the obtained new hermitian cube is emi. In fact, for $i=1,\ldots,k$, define $\lambda_i^1\overline{E}$ to be
$$(\lambda_i^1\overline{E})_\alpha=
\left\{
\begin{array}{ll}
(E_\alpha,h_\alpha), & \text{if } \alpha_i=-1,0; \\
(E_\alpha,h'_\alpha), & \text{if } \alpha_i=1,
\end{array}
\right.$$
where $h'_\alpha$ is the metric induced by $h_{\alpha_1,\ldots,\alpha_{i-1},0,\alpha_{i+1},\ldots,\alpha_k}$. Thus $\lambda_i^1\overline{E}$ has the same locally free sheaves as $\overline{E}$, but the metrics on the face $\partial_i^1E$ are induced by the metrics of the face $\partial_i^0\overline{E}$. To measure the difference between $\overline{E}$ and $\lambda_i^1\overline{E}$, let $\lambda_i^2(\overline{E})$ be the hermitian $k$-cube determined by $\partial_i^{-1}\lambda_i^2(\overline{E})=\partial_i^1\overline{E}$, $\partial_i^0\lambda_i^2(\overline{E})=\partial_i^1\lambda_i^1(\overline{E})$, and $\partial_i^1\lambda_i^2(\overline{E})=0$. Set $\lambda_i=\lambda_i^1+\lambda_i^2$, $\lambda=\lambda_k\circ\cdots\circ\lambda_1$ if $k\geq1$ and $\lambda={\rm Id}$ otherwise. Then the map $\lambda$ induces a morphism of complexes $$\widetilde{\Z}C_*\mathcal{U}\to \widetilde{\Z}C_*^{\rm emi}\mathcal{U}$$ which is the quasi-inverse of the inclusion $\widetilde{\Z}C_*^{\rm emi}\mathcal{U}\hookrightarrow \widetilde{\Z}C_*\mathcal{U}$. To specify the $\mu_n$-equivariant variety $X$, we shall write
$\widetilde{\Z}C_*(X,\mu_n):=\widetilde{\Z}C_*\mathcal{U}$.

\begin{defn}\label{202}
Fix a primitive $n$-th root of unity $\zeta_n$, the restriction of an equivariant hermitian vector bundle $\overline{F}\mid_{X_{\mu_n}}$ over the fixed point subvariety splits into a direct sum $\oplus_{l=1}^n\overline{F}_l$ where $F_l$ is the eigenbundle of $F\mid_{X_{\mu_n}}$ corresponding to the eigenvalue ${\zeta_n}^l$. Let $K_l$ be the curvature form with respect to the unique connection on $\overline{F}_l$ compatible with both the hermitian and the complex structure, the equivariant Chern-Weil form associated to $\overline{F}$ is defined as 
$${\rm ch}_g^0(\overline{F}):=\sum_{l=1}^n{\zeta_n}^l{\rm Tr}\big(\exp(-K_l)\big).$$
Define $R_n=\R$ if $n=1$ and $R_n=\C$ otherwise, denote $V\otimes_{\R}{R_n}$ by $V_{R_n}$ for any real vector space $V$, the equivariant higher Bott-Chern form associated to hermitian $k$-cube $\overline{E}$ is defined as $${\rm ch}_g^k(\overline{E}):={\rm ch}_g^0\Big({\rm tr}_k\big(\lambda(\overline{E})\big)\Big)\in \bigoplus_{p\geq0}\widetilde{D}^*(X_{\mu_n},p)[2p]_{R_n}.$$
\end{defn}

\begin{defn}
Let $\overline{F}\mid_{X_{\mu_n}}=\oplus_{l=1}^n\overline{F}_l$ be the restriction of an equivariant hermitian vector bundle over the fixed point subvariety, where $F_l$ is the eigenbundle of $F\mid_{X_{\mu_n}}$ corresponding to the eigenvalue ${\zeta_n}^l$ and $K_l$ is the curvature form of $\overline{F}_l$. The equivariant Todd form is defined as 
$${\rm Td}_g(\overline{F})={\rm det}\big(\frac{-K_n}{1-e^{K_n}}\big)\prod_{l\neq n}{\rm det}\big(\frac{1}{1-\zeta_n^{-l}e^{K_l}}\big).$$
\end{defn}

When $X$ is proper, Burgos and Wang gave in \cite[Section 6]{BW} a quasi-inverse $\varphi: \widetilde{D}^*(X,p)\to D^*(X,p)$ of the quasi-isomorphism $\iota: D^*(X,p)\to \widetilde{D}^*(X,p)$. By means of this quasi-inverse, the equivariant higher Bott-Chern form has another expression with value in $\bigoplus_{p\geq0}D^*(X_{\mu_n},p)[2p]_{R_n}$. To see this expression, let us set $z=x/y$ which defines the coordinate map $\mathbb{C}\to \mathbb{P}^1_{\mathbb{C}}$ by sending $z\to [z,1]$. Then $\log\mid z\mid$ defines an $L^1$ function on $\mathbb{P}^1_{\mathbb{C}}$, which can be considered as a current. We shall denote by $\log\mid z_1\mid,\cdots,\log\mid z_k\mid$ the corresponding currents on $(\mathbb{P}^1_{\mathbb{C}})^k$. These currents can be formally considered as elements in $D^1\big((\mathbb{P}^1_{\mathbb{C}})^k,1\big)$, and they satisfy the following differential equation
$$d_\mathcal{D}\log\mid z_j\mid=-2\partial\overline{\partial}\log\mid z_j\mid=-2i\pi(\delta_{\mathbb{P}^1_{\mathbb{C}}\times\mathbb{P}^1_{\mathbb{C}}\times\cdots\times\{\infty\}\times\cdots\times\mathbb{P}^1_{\mathbb{C}}}-
\delta_{\mathbb{P}^1_{\mathbb{C}}\times\mathbb{P}^1_{\mathbb{C}}\times\cdots\times\{0\}\times\cdots\times\mathbb{P}^1_{\mathbb{C}}})$$
where $\infty$ and $0$ stand at the $j$-th place. Let $u_1,\cdots,u_k$ be $k$ elements in $\bigoplus_{p\geq 0}D^{2p-1}(\cdot,p)$, we define an element in $\bigoplus_{p\geq 0}D^{2p-k}(\cdot,p)$ by the formula
$$C_k(u_1,\cdots,u_k):=-(-\frac{1}{2})^{k-1}\sum_{\sigma\in \mathfrak{S}_k}(-1)^\sigma u_{\sigma(1)}\bullet(u_{\sigma(2)}\bullet(\cdots u_{\sigma(k)})\cdots)$$ where $\mathfrak{S}_k$ stands for the $k$-th symmetric group. Then we have
\begin{align}\label{dc}
d_\mathcal{D}C_k(u_1,\cdots,u_k)&=(-\frac{1}{2})k\sum_{j=1}^k(-1)^{j-1}d_\mathcal{D}(u_j)\bullet C_{k-1}(u_1,\cdots,\widehat{u_j},\cdots,u_k)\notag \\
&=(-\frac{1}{2})k\sum_{j=1}^k(-1)^{j-1}d_\mathcal{D}(u_j)\wedge C_{k-1}(u_1,\cdots,\widehat{u_j},\cdots,u_k).
\end{align}
We refer to \cite[Lemma 2.9]{Roe} for a proof of these identities. With the above notations, the equivariant higher Bott-Chern form associated to a hermitian $k$-cube $\overline{E}$ with $k\geq 0$ is given by the expression
$$\varphi\big({\rm ch}_g^k(\overline{E})\big)=\frac{(-1)^k}{2k!(2\pi i)^k}\int_{(\mathbb{P}^1)^k}{\rm ch}_g^k(\overline{E})\wedge C_k(\log\mid z_1\mid^2,\cdots,\log\mid z_k\mid^2).$$

\begin{thm}\label{203}
The equivariant higher Bott-Chern forms induce a morphism of complexes
$$\xymatrix{ \widetilde{\Z}C^*(X,\mu_n) \ar[r]^-{\lambda} & \widetilde{\Z}C^*_{\rm emi}(X,\mu_n) \ar[r]^-{{\rm ch}_g^0\circ{\rm tr}_*} & \bigoplus_{p\geq0}\widetilde{D}^*(X_{\mu_n},p)[2p]_{R_n} \ar[r]^-{\varphi} & \bigoplus_{p\geq0}D^*(X_{\mu_n},p)[2p]_{R_n},}$$ which is denoted by ${\rm ch}_g$. Here, $\widetilde{\Z}C^*(X,\mu_n)$ and $\widetilde{\Z}C^*_{\rm emi}(X,\mu_n)$ are the (cohomological) complexes associated to the homological complexes $\widetilde{\Z}C_*(X,\mu_n)$ and $\widetilde{\Z}C_*^{\rm emi}(X,\mu_n)$.
\end{thm}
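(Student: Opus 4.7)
The plan is to verify compatibility with differentials at each of the three arrows in the composition. The map $\lambda$ is already a chain map between $\widetilde{\Z}C_*(X,\mu_n)$ and $\widetilde{\Z}C_*^{\rm emi}(X,\mu_n)$ (which is recalled just above the statement as a quasi-inverse of the inclusion $\widetilde{\Z}C_*^{\rm emi}\mathcal{U}\hookrightarrow \widetilde{\Z}C_*\mathcal{U}$), and $\varphi$ is a chain map by construction (it is the quasi-inverse of $\iota$ produced in \cite[Section 6]{BW}). So the only nontrivial part is to show that the middle map ${\rm ch}_g^0\circ{\rm tr}_*$ is a morphism from $\widetilde{\Z}C^*_{\rm emi}(X,\mu_n)$ to the total complex $\bigoplus_{p\geq0}\widetilde{D}^*(X_{\mu_n},p)[2p]_{R_n}$.

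I would attack this in two halves, corresponding to the two differentials of the double complex defining $\widetilde{D}^*$. For the vertical (Deligne) differential $d_\mathcal{D}$, since ${\rm tr}_k(\overline{E})$ is a genuine equivariant hermitian vector bundle on $X\times(\mathbb{P}^1)^k$, its equivariant Chern-Weil form ${\rm ch}_g^0({\rm tr}_k(\overline{E}))=\sum_l\zeta_n^l\mathrm{Tr}(\exp(-K_l))$ is a closed form of pure type $(p,p)$ on $X_{\mu_n}\times(\mathbb{P}^1)^k$ for each $l$, so it lies in $\ker d_\mathcal{D}$. Thus $d_\mathcal{D}$ contributes nothing and we are reduced to the cubical differential $d''=\sum(-1)^{i+j-1}d_i^j$. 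Here the key input is the behaviour of the transgression construction under the coface maps of $(\mathbb{P}^1)^\cdot$: by the very definition of ${\rm tr}_k$, pulling back along $d_j^i$ amounts to restricting to $\{0\}$ or $\{\infty\}$ on the $i$-th projective line, and a direct unwinding of the definition (as in the non-equivariant case of \cite[Section 3]{BW}) identifies this restriction with ${\rm tr}_{k-1}(\partial_i^j\overline{E})$ as a $\mu_n$-equivariant hermitian bundle. Functoriality of ${\rm ch}_g^0$ under such pullbacks then gives the required identity $d''({\rm ch}_g^0\circ{\rm tr}_k)={\rm ch}_g^0\circ{\rm tr}_{k-1}\circ d$ on $\widetilde{\Z}C^*_{\rm emi}(X,\mu_n)$.

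It remains to show that this map descends to the quotient complex defining $\widetilde{D}^{*,k}$, i.e.\ that degenerate cubes and the $\omega_i\wedge s_i(\cdot)$ factor are sent to zero. If $\overline{E}=S_i^{\pm1}\overline{F}$ is degenerate, then ${\rm tr}_k(\lambda(\overline{E}))$ is pulled back along the $i$-th projection $X\times(\mathbb{P}^1)^k\to X\times(\mathbb{P}^1)^{k-1}$ (up to a tensor with the tautological line along the degenerate direction); the degenerate contribution is then absorbed precisely by the $\omega_i\wedge s_i(D_{\mathbb P}^{*-2,k+1}(X,p-1))$ summand quotiented out in Definition~\ref{201}, which is why that summand is there. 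Modding out makes the result vanish.

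The main obstacle, as in Burgos--Wang, is the bookkeeping in the last step: checking by induction on $k$ that the transgression of a degenerate emi-cube lies in the subspace generated by pullbacks via $s_i$ together with $\omega_i\wedge(\cdot)$. The equivariance introduces no new difficulty because $\mu_n$ acts trivially on $(\mathbb{P}^1)^k$, so the decomposition ${\rm tr}_k(\lambda(\overline{E}))|_{X_{\mu_n}\times(\mathbb{P}^1)^k}=\bigoplus_l {\rm tr}_k(\lambda(\overline{E}))_l$ into $\zeta_n^l$-eigenbundles is compatible with all the operations above; one simply applies the non-equivariant argument of \cite[Theorem 3.14 and Proposition 6.8]{BW} to each eigenbundle separately and assembles the results with the weights $\zeta_n^l$.
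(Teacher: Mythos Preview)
The paper states this theorem without proof; it is recalled from \cite{T3} and is the equivariant analogue of \cite[Theorem~3.14, Proposition~6.8]{BW}. Your approach is the natural one and matches what is implicit in those references: $\lambda$ and $\varphi$ are chain maps by construction, and the only content is in ${\rm ch}_g^0\circ{\rm tr}_*$, where one uses closedness of Chern--Weil forms for $d_{\mathcal D}$ and the restriction behaviour of transgression bundles for $d''$. Your final observation---that $\mu_n$ acts trivially on $(\mathbb{P}^1)^k$, so one may decompose into eigenbundles and apply the non-equivariant argument of \cite{BW} componentwise---is exactly the point.

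One small imprecision worth flagging: the restriction of ${\rm tr}_k(\overline{E})$ to $z_i=\infty$ is not ${\rm tr}_{k-1}(\partial_i^j\overline{E})$ for a single $j$, but rather the direct sum ${\rm tr}_{k-1}(\partial_i^{-1}\overline{E})\oplus{\rm tr}_{k-1}(\partial_i^{1}\overline{E})$, while restriction to $z_i=0$ gives ${\rm tr}_{k-1}(\partial_i^{0}\overline{E})$. The three faces $j\in\{-1,0,1\}$ of the cubical differential therefore match the two cofaces of $(\mathbb{P}^1)^\cdot$ only after using additivity of ${\rm ch}_g^0$ on direct sums. This is the mechanism behind $d''({\rm ch}_g^0\circ{\rm tr}_k)={\rm ch}_g^0\circ{\rm tr}_{k-1}\circ d$, and it is used explicitly later in the paper (e.g.\ in the proof of Proposition~\ref{208}). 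Your reference to \cite[Section~3]{BW} covers this, but the sentence ``identifies this restriction with ${\rm tr}_{k-1}(\partial_i^j\overline{E})$'' should be amended accordingly.
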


Specify to the case $k=1$, let $\bar{\varepsilon}: 0\to \bar{E}_{-1}\to \bar{E}_0\to \bar{E}_1\to 0$ be a hermitian $1$-cube, then 
\begin{align*}
d_\mathcal{D}{\rm ch}_g(\bar{\varepsilon})&=d_\mathcal{D}\bigg(\frac{1}{4\pi i}\int_{\mathbb{P}^1}{\rm ch}_g^0\Big({\rm tr}_1\big(\lambda(\bar{\varepsilon})\big)\Big)\log\mid z\mid^2\bigg)\\
&={\rm ch}_g(\bar{E}_{0})-{\rm ch}_g(\bar{E}_{-1})-{\rm ch}_g(\bar{E}_{1}).
\end{align*}
If $\bar{\varepsilon}$ is split, by replacing $z$ by $1/z$, we know that 
\begin{align*}
{\rm ch}_g(\bar{\varepsilon})&=\frac{1}{4\pi i}\int_{\mathbb{P}^1}{\rm ch}_g^0\Big({\rm tr}_1\big(\lambda(\bar{\varepsilon})\big)\Big)\log\mid z\mid^2\\
&=\frac{1}{4\pi i}\int_{\mathbb{P}^1}{\rm ch}_g^0\big(\bar{E}_{1}(1)\big)\log\mid z\mid^2+\frac{1}{4\pi i}\int_{\mathbb{P}^1}{\rm ch}_g^0\big(\frac{\bar{E}_{-1}(1)\oplus \bar{E}_{-1}(1)}{\bar{E}_{-1}}\big)\log\mid z\mid^2\\
&=0.
\end{align*}

Let ${\rm ch}'_g$ denote the usual equivariant Chern-Weil forms with the factor $2\pi i$ inside $${\rm ch}'_g(\bar E)=\sum_{l=1}^n{\zeta_n}^l{\rm Tr}\big(\exp(\frac{-K_l}{2\pi i})\big),$$ and let $\Phi$ be an operator acting on $2n$-forms by $\Phi(\alpha)=(2\pi i)^{-n}\alpha$. Then $$\Phi\big({\rm ch}_g(\bar E)\big)={\rm ch}'_g(\bar E)$$ and 
$$\frac{\bar{\partial}\partial}{2\pi i}\Big(2\Phi\big({\rm ch}_g(\bar{\varepsilon})\big)\Big)={\rm ch}'_g(\bar{E}_{0})-{\rm ch}'_g(\bar{E}_{-1})-{\rm ch}'_g(\bar{E}_{1}).$$
This means, after a rescaling, ${\rm ch}_g(\bar{\varepsilon})$ satisfies the axiomatic conditions for a theory of unique equivariant secondary Bott-Chern classes \cite[Theorem 3.4]{KR1} (See \cite[\S1, (f)]{BGS} for the non-equivariant case). Notice that in \cite{BGS}, the authors used the supertraces of Quillen's superconnections to define the non-equivariant secondary Bott-Chern form $\widetilde{\rm ch}$. Split $\bar{\varepsilon}\mid_{X_{\mu_n}}$ into a direct sum of short exact sequences of its eigenbundles $\oplus_{l=1}^n \bar{\varepsilon}_l$ and define
$$\widetilde{{\rm ch}}_g(\bar{\varepsilon}):=\sum_{l=1}^n {\zeta_n}^l \widetilde{\rm ch}(\bar{\varepsilon}_l).$$
Then we get another way, using the supertraces of Quillen's superconnections, to define the equivariant secondary Bott-Chern form $\widetilde{{\rm ch}}_g(\bar{\varepsilon})$ which satisfies the equation $$\frac{\bar{\partial}\partial}{2\pi i}\widetilde{{\rm ch}}_g(\bar{\varepsilon})={\rm ch}'_g(\bar{E}_{0})-{\rm ch}'_g(\bar{E}_{-1})-{\rm ch}'_g(\bar{E}_{1}).$$ So, $2\Phi\big({\rm ch}_g(\bar{\varepsilon})\big)$ must be equal to $\widetilde{{\rm ch}}_g(\bar{\varepsilon})$ modulo 
$\Im\partial+\Im\bar{\partial}$. Let us write $2\Phi\big({\rm ch}_g(\bar{\varepsilon})\big)-\widetilde{{\rm ch}}_g(\bar{\varepsilon})=\partial\Delta_\partial(\bar{\varepsilon})+\bar{\partial}\Delta_{\bar{\partial}}(\bar{\varepsilon})$, the following theorem states that $\Delta_\partial(\bar{\varepsilon})$ and $\Delta_{\bar{\partial}}(\bar{\varepsilon})$ can be chosen to admit some funtorial property.

\begin{thm}\label{explain_t}
Let notations and assumptions be as above. There is a functorial choice of the differential forms $\Delta_\partial(\bar{\varepsilon})$ and $\Delta_{\bar{\partial}}(\bar{\varepsilon})$ such that 
$$2\Phi\big({\rm ch}_g(\bar{\varepsilon})\big)-\widetilde{{\rm ch}}_g(\bar{\varepsilon})=\partial\Delta_\partial(\bar{\varepsilon})+\bar{\partial}\Delta_{\bar{\partial}}(\bar{\varepsilon})$$
and that
$\Delta_\partial(j^*\bar{\varepsilon})=j^*\Delta_\partial(\bar{\varepsilon}), \Delta_{\bar{\partial}}(j^*\bar{\varepsilon})=j^*\Delta_{\bar{\partial}}(\bar{\varepsilon})$ for any equivariant morphism $j: X'\to X$.
\end{thm}
\begin{proof}
For hermitian $1$-cube $\bar{\varepsilon}: 0\to \bar{E}_{-1}\to \bar{E}_0\to \bar{E}_1\to 0$, we divide it into two emi-$1$-cubes
$$\bar{\varepsilon}_1: 0\to \bar{E}_{-1}\to \bar{E}_0\to \bar{E}'_1\to 0$$ and 
$$\bar{\varepsilon}_2: 0\to \bar{E}_{1}\to \bar{E}'_1\to 0\to 0$$
where $\bar{E}'_1$ is $E_1$ endowed with the quotient metric. 
According to the definition of the morphism $\lambda$, the higher Bott-Chern form is additive ${\rm ch}_g(\bar{\varepsilon})={\rm ch}_g(\bar{\varepsilon}_1)+{\rm ch}_g(\bar{\varepsilon}_2)$. To study the secondary Bott-Chern form constructed by the supertraces of Quillen's superconnections, we write down a double complex
\begin{align}
\xymatrix{ & 0 \ar[d] & 0 \ar[d] & 0 \ar[d] & \\
0 \ar[r] & \bar{E}_{-1} \ar[r] \ar[d] & \bar{E}_0 \ar[r] \ar[d] & \bar{E}_1 \ar[r] \ar[d] & 0 \\
0 \ar[r] & \bar{E}_{-1} \ar[r] \ar[d] & \bar{E}_0 \ar[r] \ar[d] & \bar{E}'_1 \ar[r] \ar[d] & 0 \\
0 \ar[r] & 0 \ar[r] \ar[d] & 0 \ar[r] \ar[d] & 0 \ar[r] \ar[d] & 0 \\
& 0 & 0 & 0 &}
\end{align}

Restrict every bundle over $X_{\mu_n}$ and split it into the direct sum of eigenbundles, then one can immediately repeat the argument given in \cite[Theorem 1.20]{BGS} (where the non-equivariant bundles were dealt with) to write down a proof of the fact that $\widetilde{{\rm ch}}_g(\bar{\varepsilon})=\widetilde{{\rm ch}}_g(\bar{\varepsilon}_1)+\widetilde{{\rm ch}}_g(\bar{\varepsilon}_2)$ modulo $\Im\partial+\Im\bar{\partial}$.  In the proof of \cite[Theorem 1.20]{BGS}, the error terms were explicitly written down and were functorial (see \cite[(1.71) (1.72) (1.78) (1.81) (1.82)]{BGS}). That means one can fix a functorial choice of differential forms $\Delta'_\partial(\bar{\varepsilon})$ and $\Delta'_{\bar{\partial}}(\bar{\varepsilon})$ such that 
$$\widetilde{{\rm ch}}_g(\bar{\varepsilon})-\big(\widetilde{{\rm ch}}_g(\bar{\varepsilon}_1)+\widetilde{{\rm ch}}_g(\bar{\varepsilon}_2)\big)=\partial\Delta'_\partial(\bar{\varepsilon})+\bar{\partial}\Delta'_{\bar{\partial}}(\bar{\varepsilon}).$$
So we may reduce our proof to the case where $\bar{\varepsilon}$  is an emi-$1$-cube.

Now we consider the following exact sequence on $X\times \mathbb{P}^1$
$$\xymatrix{ \Psi:\quad 0 \ar[rr] && \bar{E}_{-1}  \ar[rr]^-{{\rm Id}\otimes \sigma_\infty\oplus i\otimes \sigma_0} && \bar{E}_{-1}(1)\oplus\bar{E}_0(1) \ar[rr] && {\rm tr}_1(\bar{\varepsilon}) \ar[rr] && 0, }
$$
we compute, using the fact that $\int_{\mathbb{P}^1}{\rm ch}_g(\bar{E}_{-1})\log\mid z\mid=0$, $\int_{\mathbb{P}^1}{\rm ch}_g\big(\bar{E}_{-1}(1)\oplus \bar{E}_{0}(1)\big)\log\mid z\mid=0$ and the Stokes  formula, 
\begin{align*}
2\Phi\big({\rm ch}_g(\bar{\varepsilon})\big)&=\int_{\mathbb{P}^1}\Phi\Big({\rm ch}_g\big({\rm tr}_1(\bar{\varepsilon})\big)\Big)\log\mid z\mid^2\\
&=-\int_{\mathbb{P}^1}\frac{\bar{\partial}\partial}{2\pi i}\widetilde{{\rm ch}}_g(\Psi)\log \mid z\mid^2\\
&=\frac{1}{2\pi i}\int_{\mathbb{P}^1}(\partial_X+\partial_z)(\bar{\partial}_X+\bar{\partial}_z)\widetilde{{\rm ch}}_g(\Psi)\log \mid z\mid^2\\
&=\partial_X\bar{\partial}_X(\frac{1}{2\pi i}\int_{\mathbb{P}^1}\widetilde{{\rm ch}}_g(\Psi)\log \mid z\mid^2)+\partial_X(\frac{1}{2\pi i}\int_{\mathbb{P}^1}\bar{\partial}_z\widetilde{{\rm ch}}_g(\Psi)\log \mid z\mid^2)\\
&\quad-\bar{\partial}_X(\frac{1}{2\pi i}\int_{\mathbb{P}^1}{\partial}_z\widetilde{{\rm ch}}_g(\Psi)\log \mid z\mid^2)+\frac{1}{2\pi i}\int_{\mathbb{P}^1}{\partial}_z\bar{\partial}_z\widetilde{{\rm ch}}_g(\Psi)\log \mid z\mid^2\\
&=\frac{1}{2\pi i}\int_{\mathbb{P}^1}\widetilde{{\rm ch}}_g(\Psi){\partial}_z\bar{\partial}_z\log \mid z\mid^2
+\partial_X\bar{\partial}_X(\frac{1}{2\pi i}\int_{\mathbb{P}^1}\widetilde{{\rm ch}}_g(\Psi)\log \mid z\mid^2)\\
&\quad+\partial_X(\frac{1}{2\pi i}\int_{\mathbb{P}^1}\bar{\partial}_z\widetilde{{\rm ch}}_g(\Psi)\log \mid z\mid^2)-\bar{\partial}_X(\frac{1}{2\pi i}\int_{\mathbb{P}^1}{\partial}_z\widetilde{{\rm ch}}_g(\Psi)\log \mid z\mid^2)
\end{align*}
Set
$$\Delta_{\partial}(\bar{\varepsilon})=\bar{\partial}_X(\frac{1}{2\pi i}\int_{\mathbb{P}^1}\widetilde{{\rm ch}}_g(\Psi)\log \mid z\mid^2)+\frac{1}{2\pi i}\int_{\mathbb{P}^1}\bar{\partial}_z\widetilde{{\rm ch}}_g(\Psi)\log \mid z\mid^2$$
and 
$$\Delta_{\bar{\partial}}(\bar{\varepsilon})=-\frac{1}{2\pi i}\int_{\mathbb{P}^1}{\partial}_z\widetilde{{\rm ch}}_g(\Psi)\log \mid z\mid^2.$$
We get 
$$2\Phi\big({\rm ch}_g(\bar{\varepsilon})\big)=\widetilde{{\rm ch}}_g(\Psi\mid_{X\times\{\infty\}})-\widetilde{{\rm ch}}_g(\Psi\mid_{X\times\{0\}})+\partial\Delta_{\partial}(\bar{\varepsilon})+\bar{\partial}\Delta_{\bar{\partial}}(\bar{\varepsilon}).$$

By the construction,  $\Psi\mid_{X\times\{0\}}$ is split and $\Psi\mid_{X\times\{\infty\}}$ is isometric to the direct sum of $\bar{\varepsilon}$ and a split exact sequence $0\to 0\to \bar{E}_{-1}\to \bar{E}_{-1}\to 0$, we finally have 
$$2\Phi\big({\rm ch}_g(\bar{\varepsilon})\big)-\widetilde{{\rm ch}}_g(\bar{\varepsilon})=\partial\Delta_{\partial}(\bar{\varepsilon})+\bar{\partial}\Delta_{\bar{\partial}}(\bar{\varepsilon}).$$
Since the $1$-transgression bundle construction $\Psi$ is functorial, the differential forms $\Delta_{\partial}(\bar{\varepsilon})$ and $\Delta_{\bar{\partial}}(\bar{\varepsilon})$ associated to $\widetilde{{\rm ch}}_g(\Psi)$ are also functorial, thus we complete the whole proof.
\end{proof}

\begin{rem}\label{explain_tt}
(i) According to Theorem~\ref{explain_t}, we can make a functorial choice of differential forms $\Delta_\partial(\bar{\varepsilon})$ and $\Delta_{\bar{\partial}}(\bar{\varepsilon})$ for any hermitian $1$-cube $\bar{\varepsilon}$ such that 
$$2\Phi\big({\rm ch}_g(\bar{\varepsilon})\big)-\widetilde{{\rm ch}}_g(\bar{\varepsilon})=\partial\Delta_\partial(\bar{\varepsilon})+\bar{\partial}\Delta_{\bar{\partial}}(\bar{\varepsilon}).$$
Set $\Delta(\bar{\varepsilon})=-\Phi^{-1}\big(\frac{\Delta_\partial(\bar{\varepsilon})+\Delta_{\bar{\partial}}(\bar{\varepsilon})}{2}\big)$, then $\Delta(\bar{\varepsilon})$ is functorial and by the definition of the Deligne complex $\mathfrak{D}^*\big(E_{\log}(X), p\big)$ we have
$${\rm ch}_g(\bar{\varepsilon})-\Phi^{-1}(\frac{\widetilde{{\rm ch}}_g(\bar{\varepsilon})}{2})=-\pi\big((\partial+\bar{\partial})\Delta(\bar{\varepsilon})\big)=d_\mathcal{D}\Delta(\bar{\varepsilon}).$$

(ii) It is easily seen from the proof of Theorem~\ref{explain_t} that if one uses another way to define the equivariant Bott-Chern form $\widetilde{{\rm ch}}_g$ which satisfies the axiomatic conditions in \cite[Theorem 1.29]{BGS} (at the level of differential forms) and which is additive for direct sum of short exact sequences, then one can also make a functorial choice of element $\Delta(\bar{\varepsilon})$ for any hermitian emi-$1$-cube $\bar{\varepsilon}$ such that 
$${\rm ch}_g(\bar{\varepsilon})-\Phi^{-1}(\frac{\widetilde{{\rm ch}}_g(\bar{\varepsilon})}{2})=d_\mathcal{D}\Delta(\bar{\varepsilon}).$$
\end{rem}

If $X$ is a regular $\mu_n$-projective arithmetic scheme over an arithmetic ring $(D,\Sigma,F_\infty)$, we shall denote $X_\R:=\big(X(\C),F_\infty\big)$ the real variety associated to $X$ where $F_\infty$ is the antiholomorphic involution of $X(\C)$ induced by the conjugate-linear involution $F_\infty$ over $(D,\Sigma,F_\infty)$. For any sheaf of complex vector spaces $V$ with a real structure over $X_\R$, we denote by $\sigma$ the involution given by $$\omega\mapsto \overline{F_\infty^*(\omega)}.$$
Write $D^*(X_\R,p):=D^*\big(X(\C),p\big)^\sigma$ for the subcomplex of $D^*\big(X(\C),p\big)$ consisting of the fixed elements under $\sigma$, we define the real Deligne-Beilinson cohomology of $X$ as $$H_D^*\big(X,\R(p)\big):=H^*\big(D^*(X_\R,p)\big).$$

Let us denote by $\widehat{\mathcal{P}}(X,\mu_n)$ the exact category of $\mu_n$-equivariant hermitian vector bundles on $X$, and by $\widehat{S}(X,\mu_n)$ the simplicial set associated to the Waldhausen $S$-construction of $\widehat{\mathcal{P}}(X,\mu_n)$ (cf. \cite[Section 2.3]{T3}). The forgetful functor (forget about the metrics) $\pi: \widehat{\mathcal{P}}(X,\mu_n)\to \mathcal{P}(X,\mu_n)$ induces an equivalence of categories, so we have homotopy equivalence $$\mid \widehat{S}(X,\mu_n)\mid\simeq \mid S(X,\mu_n)\mid$$ and isomorphisms of abelian groups
$$K_m(X,\mu_n)\cong \pi_{m+1}(\mid \widehat{S}(X,\mu_n)\mid,0)$$ for any $m\geq 0$. To give the simplicial description of the equivariant regulator maps, we associate to each element in $S_k\widehat{\mathcal{P}}(X,\mu_n)$ a hermitian $(k-1)$-cube. Firstly, notice that an element $A$ in $S_k\widehat{\mathcal{P}}(X,\mu_n)$ is a family of injections
$$A_{0,1}\rightarrowtail A_{0,2}\rightarrowtail \cdots\rightarrowtail A_{0,k}$$
of $\mu_n$-equivariant hermitian vector bundles on $X$ with quotients $A_{i,j}\simeq A_{0,j}/{A_{0,i}}$ for each $i<j$. For $k=1$, we write
$${\rm Cub}(A_{0,1})=A_{0,1}.$$ Suppose that the map ${\rm Cub}$ is defined for all $l<k$, then ${\rm Cub}A$ is the $(k-1)$-cube with
\begin{align*}
\partial_1^{-1}{\rm Cub}A & =s_{k-2}^1\cdots s_1^1(A_{0,1}), \\
\partial_1^{1}{\rm Cub}A & ={\rm Cub}(\partial_0A).
\end{align*}

Let $\Z\widehat{S}_*(X,\mu_n)$ be the simplicial abelian group generated by the simplicial set $\widehat{S}(X,\mu_n)$, and let $\mathcal{N}\big(\Z\widehat{S}_*(X,\mu_n)\big)$ be the Moore complex associated to $\Z\widehat{S}_*(X,\mu_n)$ with differential $d=\sum_{i=0}^k(-1)^i\partial_i$ where $\partial_i$ is the face map of $\widehat{S}(X,\mu_n)$. Then, according to \cite[Corollary 4.8]{BW}, the map ${\rm Cub}$ defined above extends by linearity to a morphism of homological complexes
$${\rm Cub}:\quad \mathcal{N}\big(\Z\widehat{S}_*(X,\mu_n)\big)\to \widetilde{\Z}C_*(X,\mu_n)[-1],$$ and hence one gets a simplicial map $${\rm Cub}:\quad \Z\widehat{S}_*(X,\mu_n)\to \mathcal{K}\big(\widetilde{\Z}C_*(X,\mu_n)[-1]\big)$$ where $\mathcal{K}$ is the Dold-Puppe functor.

\begin{defn}\label{204}
Let notations and assumptions be as above. We denote by $D^{2p-*}(X_{\mu_n},p)$ the homological complex associated to the complex $\tau_{\leq0}\big(D^*(X_{\mu_n},p)[2p]\big)$ which is the canonical truncation of $D^*(X_{\mu_n},p)[2p]$ at degree $0$. We define a simplicial map
$$\xymatrix{\widetilde{{\rm ch}}_g: \widehat{S}(X,\mu_n) \ar[r]^-{\rm Hu} & \Z\widehat{S}_*(X,\mu_n) \ar[d]^-{\rm Cub} & \\ & \mathcal{K}\big(\widetilde{\Z}C_*(X,\mu_n)[-1]\big) \ar[r]^-{\mathcal{K}({\rm ch}_g)} & \mathcal{K}\big(\bigoplus_{p\geq0}D^{2p-*}(X_{\mu_n},p)[-1]_{R_n}\big),}$$
where ${\rm Hu}$ is the Hurewicz map.
\end{defn}

\begin{defn}\label{205}
Let $X$ be a regular $\mu_n$-projective scheme over an arithmetic ring $(D,\Sigma,F_\infty)$, and let $X_{\mu_n}$ be the fixed point subscheme. The higher equivariant arithmetic K-groups of $X$ are defined as $$\widehat{K}_m(X,\mu_n):=\pi_{m+1}\big(\text{homotopy fibre of }\mid\widetilde{{\rm ch}}_g\mid\big)\quad\text{for}\quad m\geq1,$$ and the equivariant regulator maps $${\rm ch}_g:\quad K_m(X,\mu_n)\to \bigoplus_{p\geq0}H_D^{2p-m}\big(X_{\mu_n},\R(p)\big)_{R_n}$$ are defined as the homomorphisms induced by $\widetilde{{\rm ch}}_g$ at the level of homotopy groups.
\end{defn}

\begin{rem}\label{206}
(i). We have the long exact sequence
$$\cdots \to \widehat{K}_m(X,\mu_n) \to K_m(X,\mu_n) \to \bigoplus_{p\geq0}H_{\mathcal{D}}^{2p-m}\big(X_{\mu_n},\R(p)\big)_{R_n} \to
\widehat{K}_{m-1}(X,\mu_n) \to \cdots$$ ending with
$$\xymatrix{\cdots\to K_1(X,\mu_n) \ar[r] & \bigoplus_{p\geq0}H_{\mathcal{D}}^{2p-1}\big(X_{\mu_n},\R(p)\big)_{R_n}
\ar[d] & \\
& \pi_1\big(\text{homotopy fibre of }\widetilde{{\rm ch}}_g\big) \ar[r] & K_0(X,\mu_n)\to \bigoplus_{p\geq0}H_{\mathcal{D}}^{2p}\big(X_{\mu_n},\R(p)\big)_{R_n}.}$$

(ii). When $n=1$, the equivariant higher Bott-Chern forms given in Definition~\ref{202} coincide with the higher Bott-Chern forms defined in \cite{BW} for non-equivariant proper varieties. So, in this case,
$${\rm ch}_g:\quad K_m(X,\mu_1)\to \bigoplus_{p\geq0}H_D^{2p-m}\big(X,\R(p)\big)$$
is the Beilinson's regulator map.

(iii). The higher equivariant arithmetic K-groups $\widehat{K}_m(X,\mu_n)$ can be defined for non-proper $X$, for details, see \cite[Section 2]{T3}.

(iv). Let $s({\rm ch}_g)$ denote the simple complex associated to the chain morphism
$$\xymatrix{ {\rm ch}_g: & \widetilde{\Z}C_*(X,\mu_n) \ar[r]^-{{\rm ch}_g} & \bigoplus_{p\geq0}D^{2p-*}(X_{\mu_n},p)_{R_n}.}$$
Then, for any $m\geq1$, there is an isomorphism $$\widehat{K}_m(X,\mu_n)_\Q\cong H_m\big(s({\rm ch}_g),\Q\big).$$

(v). A $\mu_n$-equivariant hermitian sheaf on $X$ is a $\mu_n$-equivariant coherent sheaf on $X$ which is locally free on $X(\C)$ and is equipped with a $\mu_n$-invariant hermitian metric. To a $\mu_n$-equivariant hermitian sheaf, the higher equivariant Bott-Chern form can still be defined in the same way. Denote by $\widehat{\mathcal{P}}'(X,\mu_n)$ the category of $\mu_n$-equivariant hermitian sheaves on $X$, then instead of $\widehat{\mathcal{P}}(X,\mu_n)$ one may define a new arithmetic K-theory $\widehat{K}'_*(X,\mu_n)$ which is called the equivariant arithmetic K$'$-theory. Since $\widehat{\mathcal{P}}'(X,\mu_n)$ and $\widehat{\mathcal{P}}(X,\mu_n)$ define the same algebraic K-theory when $X$ is regular, it is easily seen from the Five-lemma that the natural inclusion $\widehat{\mathcal{P}}(X,\mu_n)\subset\widehat{\mathcal{P}}'(X,\mu_n)$ induces isomorphisms $\widehat{K}_m(X,\mu_n)\cong \widehat{K}'_m(X,\mu_n)$ for any $m\geq1$.
\end{rem}

\subsection{Equivariant analytic torsion for hermitian cubes}
In \cite{BK}, J.-M. Bismut and K. K\"{o}hler developed a theory of higher analytic torsion forms
for holomorphic submersions of complex manifolds. The higher analytic torsion form solves a differential equation which gives a refinement of the Grothendieck-Riemann-Roch theorem at the level of characteristic forms. Later, in \cite{Ma1}, X. Ma generalized J.-M. Bismut and K. K\"{o}hler's results to the equivariant case. Considering the higher K-theory and the Deligne-Beilinson cohomology, to make a refinement of the Riemann-Roch theorem at the level of higher Bott-Chern forms representing the regulator maps, one needs an extension of higher analytic torsion for hermitian cubes, this has been done in \cite{Roe}. In this subsection, we do the equivariant case by using Ma's equivariant analytic torsion forms. Our construction is slightly different to Roessler's construction.

Let $X, Y$ be two smooth $\mu_n$-projective varieties over $\mathbb{C}$, and let $f: X\to Y$ be an equivariant and smooth morphism. A K\"{a}hler fibration structure on $f$ is a real closed $(1,1)$-form $\omega$ on $X$ which induces K\"{a}hler metrics on the fibres of $f$ (cf. \cite[Def. 1.1, Thm. 1.2]{BK}). For instance, we may fix a $\mu_n$-invariant K\"{a}hler metric on $X$ and choose corresponding K\"{a}hler form $\omega$ as a K\"{a}hler fibration structure on $f$. Let $(E,h^E)$ be a $\mu_n$-equivariant hermitian vector bundle on $X$ such that $E$ is $f$-acyclic i.e. the higher direct image $R^qf_*E$ vanishes for $q>0$. The equivariant analytic torsion form $T_g(f,\omega,h^E)$ is an element of $\bigoplus_{p\geq0}D^{2p-1}(Y_{\mu_n},p)_{R_n}$, which depends on $f,\omega$ and $(E,h^E)$ and satisfies the differential equation
$$d_\mathcal{D}T_g(f,\omega,h^E)={\rm ch}_g(f_*E,f_*h^E)-\frac{1}{(2\pi i)^r}\int_{X_{\mu_n}/{Y_{\mu_n}}}{\rm Td}_g(Tf,h^{Tf}){\rm ch}_g(E,h^E)$$
where $h^{Tf}$ is the hermitian metric induced by $\omega$ on the holomorphic tangent bundle $Tf$, $r$ is the rank of the bundle $Tf_{\mu_n}$, and $f_*h^E$ is the $L^2$-metric on $f_*E$ (see the end of \cite[Section 2.2]{Roe} for a definition). By definition, for elements $u, v\in (f_*E)_y$ of the fibre of $f_*E$ over a point $y\in Y$, the $L^2$-hermitian product is given by 
$$\langle u, v\rangle_{L^2}=\frac{1}{(2\pi)^b} \int_{f^{-1}y} \langle u, v\rangle_E\frac{\omega^b}{b!}$$
where $b$ is the relative dimension of $X$ over $Y$. 

We would like to caution the reader that the equivariant analytic torsion form we use here coincides with Ma's definition only up to a rescaling. If we denote by $T'_g(f,\omega,h^E)$ Ma's equivariant torsion form, then the equality $2\Phi\big(T_g(f,\omega,h^E)\big)=T'_g(f,\omega,h^E)$ holds. From now on, we shall write $T_g(\omega,h^E)$ or $T_g(h^E)$ for $T_g(f,\omega,h^E)$, if there is no ambiguity about the underlying map or K\"{a}hler form. Now, let $\omega'$ be the form associated to another K\"{a}hler fibration structure on $f: X\to Y$ and let $h'^{Tf}$ be the metric on
$Tf$ induced by this new fibration. Let $\widetilde{{\rm Td}}_g(Tf,h'^{Tf},h^{Tf})$ be the equivariant secondary Todd form used in the Appendix \big(Section 5.1 (\ref{Ma2.3.2})\big), and set ${\rm Td}_g(Tf,h'^{Tf},h^{Tf})=\Phi^{-1}\big(\frac{\widetilde{{\rm Td}}_g(Tf,h'^{Tf},h^{Tf})}{2}\big)$. So 
$$d_\mathcal{D}{\rm Td}_g(Tf,h'^{Tf},h^{Tf})={\rm Td}_g(Tf,h^{Tf})-{\rm Td}_g(Tf,h'^{Tf}).$$

The following anomaly formula is useful for our later discussion.

\begin{thm}\label{207}
Let notations and assumptions be as above. The following identity holds in $\bigoplus_{p\geq 0}\big(D^{2p-1}(Y_{\mu_n},p)/{\Im d_\mathcal{D}}\big)$:
$$T_g(\omega,h^E)-T_g(\omega',h^E)={\rm
ch}_g(f_*E,h'^{f_*E},h^{f_*E})
-\frac{1}{(2\pi i)^r}\int_{X_{\mu_n}/{Y_{\mu_n}}}{\rm
Td}_g(Tf,h'^{Tf},h^{Tf}){\rm ch}_g(E,h^E)$$
where $(f_*E,h'^{f_*E},h^{f_*E})$ stands for the emi-$1$-cubes of hermitian vector bundles
$$\xymatrix{ 0\ar[r] & (f_*E,h'^{f_*E}) \ar[r]^-{\rm Id} & (f_*E,h^{f_*E}) \ar[r] & 0 \ar[r] & 0.}$$
\end{thm}
\begin{proof}
This is a translation of \cite[Theorem 2.13]{Ma1}, see also Theorem~\ref{A10} in the Appendix. Considering the relation between the equivariant analytic torsion forms $T_g(\omega,h^E), T_g(\omega',h^E)$ and the ones used in Ma's paper, we only need to show 
$${\rm ch}_g(f_*E,h'^{f_*E},h^{f_*E})=\Phi^{-1}\big(\frac{\widetilde{{\rm ch}}_g(f_*E,h'^{f_*E},h^{f_*E})}{2}\big)\in \bigoplus_{p\geq 0}\big(D^{2p-1}(Y_{\mu_n},p)/{\Im d_\mathcal{D}}\big).$$ But this is the content of Remark~\ref{explain_tt}.
\end{proof}

According to Remark~\ref{explain_tt} and Theorem~\ref{A1} in the Appendix, there exists a functorial choice of the differential form which measures the difference
$$T_g(\omega,h^E)-T_g(\omega',h^E)-{\rm ch}_g(f_*E,h'^{f_*E},h^{f_*E})+\frac{1}{(2\pi i)^r}\int_{X_{\mu_n}/{Y_{\mu_n}}}{\rm
Td}_g(Tf,h'^{Tf},h^{Tf}){\rm ch}_g(E,h^E)$$
in Theorem~\ref{207}. With the same notations as in Remark~\ref{explain_tt} and Theorem~\ref{A1}, we set
$$\Delta(f,\overline{E},\omega,\omega'):=-\Phi^{-1}\big(\frac{\Delta^0(f,\overline{E},\omega,\omega')+\Delta_0(f,\overline{E},\omega,\omega')}{2}\big)+\Delta(f_*E,h'^{f_*E},h^{f_*E}),$$ it satisfies the differential equation
\begin{multline*}
d_\mathcal{D}\Delta(f,\overline{E},\omega,\omega')=T_g(\omega,h^E)-T_g(\omega',h^E)-{\rm ch}_g(f_*E,h'^{f_*E},h^{f_*E})\\+\frac{1}{(2\pi i)^r}\int_{X_{\mu_n}/{Y_{\mu_n}}}{\rm
Td}_g(Tf,h'^{Tf},h^{Tf}){\rm ch}_g(E,h^E).
\end{multline*}
We consider the following setting. Let $Z$ be a compact K\"{a}hler manifold and let $Z_1$ be a closed submanifold of $Z$. Choose a K\"{a}hler metric on $Z$ and endow $Z_1$ with the restricted metric. Let $f_Z: X\times Z\to Y\times Z$ be the induced map and let $\omega,\omega'$ be the K\"{a}hler forms of the product metrics on $X\times Z$ with respect to two K\"{a}hler fibrations on $f: X\to Y$. Similarly, let $f_{Z_1}: X\times Z_1\to Y\times Z_1$ be the induced map and let $\omega_1,\omega'_1$ be the K\"{a}hler forms of the product metrics on $X\times Z_1$ with respect to the same two K\"{a}hler fibrations on $f: X\to Y$. We shall denote by $j$ (resp. $i$) the natural embedding $X\times Z_1\to X\times Z$ (resp. $Y\times Z_1\to Y\times Z$). Then $j^*\omega=\omega_1$ and $j^*\omega'=\omega'_1$. Let $\overline{E}$ be an $f_Z$-acyclic hermitian bundle on $X\times Z$, we have the following result.

\begin{lem}\label{delta}
The identity $i_{\mu_n}^*\Delta(f_Z,\overline{E},\omega,\omega')=\Delta(f_{Z_1},j^*\overline{E},\omega_1,\omega'_1)$ holds.
\end{lem}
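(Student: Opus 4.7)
The strategy is to go back to the explicit construction of the differential form $\Delta(f,\overline{E},\omega,\omega')$ in the proof of \cite[Theorem 2.13]{Ma1} and verify, ingredient by ingredient, that the construction is compatible with restriction along the closed embedding $i: Y\times Z_1\hookrightarrow Y\times Z$. Recall that $\Delta$ is produced by choosing a smooth family of K\"ahler fibration structures $\omega_t$ ($t\in [0,1]$) interpolating between $\omega$ and $\omega'$, inducing a family of metrics $h^{Tf}_t$ on $Tf$ and $h^{f_*E}_t$ on $f_*E$, and then forming the fiber integral over the parameter $t$ of certain transgression forms built from Bismut's rescaled superconnection. The resulting form satisfies precisely the differential equation displayed after Theorem~\ref{207}.

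Next I would use the product structure to reduce the statement to functoriality. Since $\omega$ (resp.\ $\omega'$) is the K\"ahler form of the product metric $p_X^*g_X+p_Z^*g_Z$ (resp.\ $p_X^*g'_X+p_Z^*g_Z$) on $X\times Z$, the pullback by $j:X\times Z_1\hookrightarrow X\times Z$ gives $j^*\omega=\omega_1$, $j^*\omega'=\omega'_1$, and similarly for the interpolating family $\omega_{1,t}:=j^*\omega_t$. The map $f_Z$ splits as $f\times\mathrm{id}_Z$, and the associated K\"ahler fibration on $f_Z$ restricts along $i$ to the corresponding K\"ahler fibration on $f_{Z_1}=f\times\mathrm{id}_{Z_1}$. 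Consequently the superconnection, the rescaled heat kernels, and all associated supertraces entering the definition of $\Delta(f_Z,\overline{E},\omega,\omega')$ restrict verbatim along $i$ to the ones associated to $f_{Z_1}$, $j^*\overline{E}$, $\omega_1$, $\omega'_1$. This is because each operator is the tensor product of an $X$-fiberwise operator with an operator on $Z$ that commutes with pullback to $Z_1$.

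Finally I would check that the remaining ingredients — the equivariant higher analytic torsion form $T_g$, the equivariant secondary Bott-Chern classes $\mathrm{ch}_g(f_*E,h^{f_*E},h'^{f_*E})$ and $\mathrm{Td}_g(Tf,h^{Tf},h'^{Tf})$, and the fiber integration $\int_{X_{\mu_n}/Y_{\mu_n}}$ — commute with $i^*$ in the product setting. Compatibility of the torsion form with restriction in the base, and the projection-type identity
\[
i^*\!\!\int_{X_{\mu_n}\times Z/Y_{\mu_n}\times Z}\!\!\alpha
=\int_{X_{\mu_n}\times Z_1/Y_{\mu_n}\times Z_1}\!\!j^*\alpha,
\]
are the standard facts that together give the equality $i^*\Delta(f_Z,\overline{E},\omega,\omega')=\Delta(f_{Z_1},j^*\overline{E},\omega_1,\omega'_1)$. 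The only potential obstacle is bookkeeping: one must verify that every step in Ma's construction (including the rescaling parameter used to define the transgression and the truncation at infinity) is purely local in the base $Y\times Z$, so that pullback by a closed embedding passes through the entire formula. Given the product structure of both the morphism and the metrics, this is immediate, which makes the lemma essentially a formal naturality statement once the construction of $\Delta$ is written out.
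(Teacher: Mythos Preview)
Your proposal is correct and follows essentially the same approach as the paper: both argue that the explicit construction of $\Delta$ in Ma's proof of \cite[Theorem 2.13]{Ma1} is built from objects (superconnections, supertraces, heat kernels, fiber integrals) that are local in the base, hence compatible with pullback along the closed embedding $i$. The paper adds one subtlety you do not make explicit: the secondary characteristic forms ${\rm ch}_g(f_*E,h^{f_*E},h'^{f_*E})$ and ${\rm Td}_g(Tf,h^{Tf},h'^{Tf})$ appearing in the differential equation for $\Delta$ can be defined either via the $1$-transgression bundle construction (as earlier in this paper) or via supertraces of Quillen superconnections (as in \cite{BGS} and in Ma's argument); these two forms differ, so $\Delta$ a priori depends on the choice, and one must check that the difference is itself compatible with restriction (this follows from the uniqueness argument in \cite[\S1.(f)]{BGS}). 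Once that is noted, the remainder is exactly your naturality argument.
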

\begin{proof}
This is a consequence of Theorem~\ref{A1} in the Appendix.
\end{proof}

\begin{defn}\label{homotopy-diagram}
By a chain homotopy of a diagram of homological complexes
\begin{align*}
\xymatrix{ A_* \ar[r]^-{i} \ar[d]^-{f} & B_* \ar[d]^-{l} \\
C_* \ar[r]^-{j} & D_*,}
\end{align*}
we understand a chain homotopy between the complex morphisms $j\circ f$ and $l\circ i$.
\end{defn}

Roughly speaking, the equivariant analytic torsion for hermitian cubes is a chain homotopy of the following diagram
\begin{align}\label{atc}
\xymatrix{ \widetilde{\Z}C_*^{f-\text{ac}}(X,\mu_n) \ar[r]^-{{\rm ch}_g} \ar[d]^-{f_*} & \bigoplus_{p\geq0}D^{2p-*}(X_{\mu_n},p)_{R_n} \ar[d]^-{{f_{\mu_n}}_*\circ {\rm Td}_g(\overline{Tf})\bullet(\cdot)} \\
\widetilde{\Z}C_*(Y,\mu_n) \ar[r]^-{{\rm ch}_g} & \bigoplus_{p\geq0}D^{2p-*}(Y_{\mu_n},p)_{R_n}}
\end{align}
where $\widetilde{\Z}C_*^{f-\text{ac}}(X,\mu_n)$ is the subcomplex of $\widetilde{\Z}C_*(X,\mu_n)$ made of $f$-acyclic bundles. Since the Waldhausen K-theory space of $\widehat{\mathcal{P}}(X,\mu_n)$ is homotopy equivalent to the Waldhausen K-theory space of the full subcategory of $\widehat{\mathcal{P}}(X,\mu_n)$ consisting of $f$-acyclic bundles, we shall always work with acyclic bundles. 

Like the non-equivariant case treated in \cite{Roe}, the equivariant analytic torsion for hermitian cubes induces a commutative diagram at the level of homology groups and hence one gets an analytic proof of the equivariant version of Gillet's Riemann-Roch theorem for higher algebraic K-theory.

To construct a chain homotopy of (\ref{atc}), let us move in two steps. Notice that the equivariant higher Bott-Chern form factors as
$$\xymatrix{ \widetilde{\Z}C_*(X,\mu_n) \ar[r]^-{\lambda} & \widetilde{\Z}C_*^{\rm emi}(X,\mu_n) \ar[r]^-{{\rm ch}_g^0\circ {\rm tr}_*} & \bigoplus_{p\geq0}D^{2p-*}(X_{\mu_n},p)_{R_n},}$$
we firstly clarify the difference between $f_*\big({\rm tr}\circ\lambda(\cdot)\big)$ and ${\rm tr}\circ\lambda\big(f_*(\cdot)\big)$. Let $\overline{E}$ be a $f$-acyclic hermitian $k$-cube in $\widehat{\mathcal{P}}(X,\mu_n)$. The hermitian bundles $f_*\big({\rm tr}_k\circ\lambda(\overline{E})\big)$ and ${\rm tr}_k\circ\lambda\big(f_*(\overline{E})\big)$ are canonically isomorphic as bundles, but carry in general different metrics. For instance, assume that $\overline{E}$ is a hermitian emi-$1$-cube, then $f_*\big({\rm tr}_1(\overline{E})\big)$ and  ${\rm tr}_1\big(f_*(\overline{E})\big)$ fit into the following two exact sequences
\begin{align*}
\xymatrix{ 0 \ar[r] & f_*(p_X^*\bar{E}_{-1})  \ar[r] & f_*(p_X^*\bar{E}_{-1})(1)\oplus f_*(p_X^*\bar{E}_0)(1) \ar[r] & f_*\big({\rm tr}_1(\overline{E})\big)  \ar[r] & 0, }
\end{align*}
and 
\begin{align*}
\xymatrix{ 0 \ar[r] & p_Y^*\big(f_*(\bar{E}_{-1})\big)  \ar[r] & p_Y^*\big(f_*(\bar{E}_{-1})\big)(1)\oplus p_Y^*\big(f_*(\bar{E}_0)(1)\big) \ar[r] & {\rm tr}_1\big(f_*(\overline{E})\big)  \ar[r] & 0. }
\end{align*}
Here $p_X$ (resp. $p_Y$) stands for the obvious projection $X\times \mathbb{P}^1\to X$ (resp. $Y\times \mathbb{P}^1\to Y$). By the definition of the $L^2$-metric, over the point $(y, t)$ in $Y\times \mathbb{P}^1$, the hermitian product on $f_*\big({\rm tr}_1(\overline{E})\big)_{(y, t)}$ relies on the integral of certain power of the K\"{a}hler form $\omega_{X\times \mathbb{P}^1}$ over the fibre $f_t$ and hence relies on $t$. But the pull-bak hermitian products on $p_Y^*\big(f_*(\bar{E}_0)_{(y, t)}$ and on $p_Y^*\big(f_*(\bar{E}_{-1})_{(y, t)}$ equal the hermitian products on $f_*(\bar{E}_0)_y$ and on $f_*(\bar{E}_{-1})_y$ which don't rely on $t$, therefore the induced hermitian product on ${\rm tr}_1\big(f_*(\overline{E})_{(y, t)}$ doesn't rely on $t$ neither. So in general, $f_*\big({\rm tr}_1(\overline{E})\big)$ and  ${\rm tr}_1\big(f_*(\overline{E})\big)$ carry different metrics.

In the following, we shall write $H(\overline{E})$ for the short exact sequence
$$\xymatrix{ 0\ar[r] & f_*\big({\rm tr}_k\circ\lambda(\overline{E})\big) \ar[r]^-{\rm Id} & {\rm tr}_k\circ\lambda\big(f_*(\overline{E})\big) \ar[r] & 0 \ar[r] & 0}$$
which is an emi-$1$-cube of hermitian bundles on $Y\times (\mathbb{P}^1)^k$. The transgression bundle of $H(\overline{E})$ is a hermitian bundle on $Y\times (\mathbb{P}^1)^{k+1}=Y\times (\mathbb{P}^1)^k\times \mathbb{P}^1$. But here we change the order of the $\mathbb{P}^1$, let $p_1$ be the first projection from $Y\times\mathbb{P}^1\times (\mathbb{P}^1)^k$ to $Y\times (\mathbb{P}^1)^k$, we apply the transgression bundle construction to the short exact sequence $H(\overline{E})$ with respect to the projection $p_1$ to get a hermitian bundle on $Y\times (\mathbb{P}^1)^{k+1}$. With some abuse of notation, we still denote this hermitian bundle by ${\rm tr}_1\big(H(\overline{E})\big)$ and it satisfies the following relations:
$${\rm tr}_1\big(H(\overline{E})\big)\mid_{Y\times \{0\}\times (\mathbb{P}^1)^k}={\rm tr}_k\circ\lambda\big(f_*(\overline{E})\big),\quad {\rm tr}_1\big(H(\overline{E})\big)\mid_{Y\times \{\infty\}\times (\mathbb{P}^1)^k}=f_*\big({\rm tr}_k\circ\lambda(\overline{E})\big)$$
and
$${\rm tr}_1\big(H(\overline{E})\big)\mid_{Y\times (\mathbb{P}^1)^{i}\times \{0\}\times (\mathbb{P}^1)^{k-i}}={\rm tr}_1\big(H(\partial_i^0\overline{E})\big),$$
$${\rm tr}_1\big(H(\overline{E})\big)\mid_{Y\times (\mathbb{P}^1)^{i}\times \{\infty\}\times (\mathbb{P}^1)^{k-i}}={\rm tr}_1\big(H(\partial_i^{-1}\overline{E})\big)\oplus {\rm tr}_1\big(H(\partial_i^{1}\overline{E})\big)$$
for $i=1,\cdots,k$. Now we define
$$\Pi'_k(\overline{E}):=\frac{(-1)^{k+1}}{2(k+1)!(2\pi i)^{k+1}}\int_{(\mathbb{P}^1)^{k+1}}{\rm ch}_g^0\Big({\rm tr}_1\big(H(\overline{E})\big)\Big)\wedge C_{k+1}(\log\mid z_1\mid^2,\cdots,\log\mid z_{k+1}\mid^2).$$
The same reasoning as in \cite[Lemma 3.3]{Roe} proves that $\Pi'_k$ vanishes on degenerate $k$-cubes, and hence we obtain a map $\Pi'_k: \widetilde{\Z}C_k^{f-\text{ac}}(X,\mu_n)\to \bigoplus_{p\geq0}D^{2p-k-1}(Y_{\mu_n},p)_{R_n}$ by linear extension.

\begin{prop}\label{208}
The equality
\begin{multline*}
d_\mathcal{D}\circ \Pi'_k(\overline{E})+\Pi'_{k-1}\circ d(\overline{E})\\
={\rm ch}_g(f_*\overline{E})-\frac{(-1)^k}{2k!(2\pi i)^k}\int_{(\mathbb{P}^1)^k}{\rm ch}_g^0\Big(f_*\big({\rm tr}_k\circ\lambda(\overline{E})\big)\Big)\wedge C_{k}(\log\mid z_1\mid^2,\cdots,\log\mid z_{k}\mid^2)
\end{multline*}
holds.
\end{prop}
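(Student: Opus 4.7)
The plan is to differentiate the defining integral of $\Pi'_k(\overline{E})$ and transfer $d$ onto the current $C_{k+1}(\log|z_1|^2,\ldots,\log|z_{k+1}|^2)$. Since the equivariant Chern-Weil form ${\rm ch}_g^0({\rm tr}_1(H(\overline{E})))$ is $d$-closed in the Deligne complex, integration by parts is legitimate up to a controlled sign. Then identity (\ref{dc}) expands $dC_{k+1}$ as a sum of $k+1$ terms in each of which exactly one of the logarithmic currents is differentiated; the Poincar\'e-Lelong formula recalled just before Definition~\ref{202} tells us that, up to a multiplicative constant of the form $-4i\pi$, $d\log|z_j|^2$ is the boundary current $\delta_{z_j=\infty}-\delta_{z_j=0}$ along the $j$-th factor. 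Each summand therefore localizes the integrand on a codimension-one stratum of $(\mathbb{P}^1)^{k+1}$ and, after integrating out the delta-factor, reduces to an integral over $(\mathbb{P}^1)^k$ of a restriction of ${\rm tr}_1(H(\overline{E}))$ paired against the corresponding $C_k$-current in the remaining variables.

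Next I would match these contributions against the right-hand side using the boundary relations for ${\rm tr}_1(H(\overline{E}))$ stated right before the proposition. For the distinguished factor $j=1$, the restriction at $z_1=0$ is ${\rm tr}_k\circ\lambda(f_*\overline{E})$ and at $z_1=\infty$ is $f_*({\rm tr}_k\circ\lambda(\overline{E}))$; after collecting the numerical prefactors these two contributions produce respectively ${\rm ch}_g(f_*\overline{E})$, via the expression for $\varphi\circ{\rm ch}_g^k(\cdot)$ recalled in the excerpt, and the second term on the right-hand side of the identity. For $j=2,\ldots,k+1$, the listed relations give ${\rm tr}_1(H(\overline{E}))|_{z_j=0}={\rm tr}_1(H(\partial_{j-1}^0\overline{E}))$ and ${\rm tr}_1(H(\overline{E}))|_{z_j=\infty}={\rm tr}_1(H(\partial_{j-1}^{-1}\overline{E}))\oplus {\rm tr}_1(H(\partial_{j-1}^{1}\overline{E}))$; by additivity of ${\rm ch}_g^0$ on direct sums, summing these contributions with the signs $(-1)^{i+\epsilon-1}$ coming from the cubical differential $d=\sum_{i,\epsilon}(-1)^{i+\epsilon-1}\partial_i^\epsilon$ rebuilds precisely $\Pi'_{k-1}(d\overline{E})$.

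The argument therefore runs in direct parallel to \cite[Lemma 3.3]{Roe}: all the analytic input is already packaged into the fact that ${\rm tr}_1(H(\overline{E}))$ is a genuine smooth hermitian bundle on $Y\times(\mathbb{P}^1)^{k+1}$, so no new analytic machinery is needed. The main obstacle is the combinatorial bookkeeping of signs and normalization: the $(-1)^{j-1}$ in (\ref{dc}), the cubical signs $(-1)^{i+\epsilon-1}$, the sign picked up when moving $d$ past the Chern-Weil form, the factor $-4i\pi$ from Poincar\'e-Lelong, and the prefactor $\frac{1}{k+1}$ hidden in (\ref{dc}) must all conspire to yield the stated coefficients $+1$ in front of ${\rm ch}_g(f_*\overline{E})$ and $-\frac{(-1)^k}{2k!(2\pi i)^k}$ in front of the second term. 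One should also verify that $\Pi'_k$ descends to the quotient $\widetilde{\Z}C_k$, i.e.\ vanishes on degenerate cubes, but this follows by the same argument as in \cite[Lemma 3.3]{Roe} without modification.
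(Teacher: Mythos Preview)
Your proposal is correct and follows exactly the same route as the paper: compute $d\circ\Pi'_k(\overline{E})$ by pushing $d$ onto $C_{k+1}$ using that ${\rm ch}_g^0({\rm tr}_1(H(\overline{E})))$ is closed, expand $dC_{k+1}$ via (\ref{dc}) and Poincar\'e--Lelong, separate the $j=1$ summand (whose restrictions at $0$ and $\infty$ give the two terms on the right-hand side) from the $j\geq 2$ summands (which rebuild $-\Pi'_{k-1}\circ d(\overline{E})$ through the cubical boundary relations). The only work is indeed the sign bookkeeping you flag, and the paper carries it out exactly as you describe.
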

\begin{proof}
We compute
\begin{multline*}
d_\mathcal{D}\circ \Pi'_k(\overline{E})=\frac{(-1)^{k+1}}{2(k+1)!(2\pi i)^{k+1}}\int_{(\mathbb{P}^1)^{k+1}}{\rm ch}_g^0\Big({\rm tr}_1\big(H(\overline{E})\big)\Big)\wedge d_\mathcal{D}C_{k+1}(\log\mid z_1\mid^2,\cdots,\log\mid z_{k+1}\mid^2)\\
=\frac{(-1)^{k+1}}{2(k+1)!(2\pi i)^{k+1}}\int_{(\mathbb{P}^1)^{k+1}}{\rm ch}_g^0\Big({\rm tr}_1\big(H(\overline{E})\big)\Big)\wedge\big((-\frac{1}{2})(k+1)\sum_{j=1}^{k+1}(-1)^{j-1}(-4\pi i)(\delta_{z_j=\infty}-\delta_{z_j=0})\\
{\wedge C_{k}(\log\mid z_1\mid^2,\cdots,\widehat{\log\mid z_j\mid^2},\cdots,\log\mid z_{k+1}\mid^2)\big)}\\
=\frac{(-1)^{k+1}}{2(k+1)!(2\pi i)^{k+1}}\int_{(\mathbb{P}^1)^{k+1}}{\rm ch}_g^0\Big({\rm tr}_1\big(H(\overline{E})\big)\Big)\wedge\big((-\frac{1}{2})(k+1)\sum_{j=2}^{k+1}(-1)^{j-1}(-4\pi i)(\delta_{z_j=\infty}-\delta_{z_j=0})\\
{\wedge C_{k}(\log\mid z_1\mid^2,\cdots,\widehat{\log\mid z_j\mid^2},\cdots,\log\mid z_{k+1}\mid^2)\big)}\\
+\frac{(-1)^k}{2k!(2\pi i)^k}\int_{(\mathbb{P}^1)^k}{\rm ch}_g^0\Big({\rm tr}_k\circ\lambda\big(f_*(\overline{E})\big)\Big)\wedge C_{k}(\log\mid z_1\mid^2,\cdots,\log\mid z_{k}\mid^2)\\
-\frac{(-1)^k}{2k!(2\pi i)^k}\int_{(\mathbb{P}^1)^k}{\rm ch}_g^0\Big(f_*\big({\rm tr}_k\circ\lambda(\overline{E})\big)\Big)\wedge C_{k}(\log\mid z_1\mid^2,\cdots,\log\mid z_{k}\mid^2)\\
=\frac{(-1)^{k+1}}{2k!(2\pi i)^{k}}\int_{(\mathbb{P}^1)^{k}}\Bigg(\bigg(\sum_{j=2}^{k+1}(-1)^{j-1}{\rm ch}_g^0\Big({\rm tr_1}\big(H(\partial_j^{-1}\overline{E}\oplus \partial_j^{1}\overline{E})\big)\Big)-{\rm ch}_g^0\Big({\rm tr}_1\big(H(\partial_i^0\overline{E})\big)\Big)\bigg)\hfill\\
{\wedge C_{k}(\log\mid z_1\mid^2,\cdots,\log\mid z_{k}\mid^2)\Bigg)+{\rm ch}_g(f_*\overline{E})}\\
-\frac{(-1)^k}{2k!(2\pi i)^k}\int_{(\mathbb{P}^1)^k}{\rm ch}_g^0\Big(f_*\big({\rm tr}_k\circ\lambda(\overline{E})\big)\Big)\wedge C_{k}(\log\mid z_1\mid^2,\cdots,\log\mid z_{k}\mid^2)\\
=\frac{(-1)^k}{2k!(2\pi i)^{k}}\int_{(\mathbb{P}^1)^{k}}{\rm ch}_g^0\Big({\rm tr}_1\big(H(-d\overline{E})\big)\Big)\wedge C_{k}(\log\mid z_1\mid^2,\cdots,\log\mid z_{k}\mid^2)+{\rm ch}_g(f_*\overline{E})\hfill\\
-\frac{(-1)^k}{2k!(2\pi i)^k}\int_{(\mathbb{P}^1)^k}{\rm ch}_g^0\Big(f_*\big({\rm tr}_k\circ\lambda(\overline{E})\big)\Big)\wedge C_{k}(\log\mid z_1\mid^2,\cdots,\log\mid z_{k}\mid^2)\\
=-\Pi'_{k-1}\circ d(\overline{E})+{\rm ch}_g(f_*\overline{E})-\frac{(-1)^k}{2k!(2\pi i)^k}\int_{(\mathbb{P}^1)^k}{\rm ch}_g^0\Big(f_*\big({\rm tr}_k\circ\lambda(\overline{E})\big)\Big)\wedge C_{k}(\log\mid z_1\mid^2,\cdots,\log\mid z_{k}\mid^2).\hfill
\end{multline*}
So we are done.
\end{proof}

On the other hand, we equip $X\times (\mathbb{P}^1)^k$ with the product metric and we define
$$\Pi''_k(\overline{E})=\frac{(-1)^{k+1}}{(k+1)!(2\pi i)^k}\int_{(\mathbb{P}^1)^k}C_{k+1}\Big(T_g\big({\rm tr}_k\circ\lambda(\overline{E})\big),\log\mid z_1\mid^2,\cdots,\log\mid z_{k}\mid^2\Big)$$
where $T_g\big({\rm tr}_k\circ\lambda(\overline{E})\big)$ is the equivariant higher analytic torsion of the hermitian bundle ${\rm tr}_k\circ\lambda(\overline{E})$ with respect to the fibration $f: X\times (\mathbb{P}^1)^k\to Y\times (\mathbb{P}^1)^k$. By \cite[Lemma 3.5]{Roe}, the map $\Pi''_k$ vanishes on degenerate $k$-cubes and hence we obtain a map $\Pi''_k: \widetilde{\Z}C_k^{f-\text{ac}}(X,\mu_n)\to \bigoplus_{p\geq0}D^{2p-k-1}(Y_{\mu_n},p)_{R_n}$ by linear extension.

\begin{thm}\label{209}
Set $\Pi_k=\Pi'_k+\Pi''_k$, then $\Pi_k$ defines a chain homotopy of the diagram (\ref{atc}). This map $\Pi_k: \widetilde{\Z}C_k^{f-\text{ac}}(X,\mu_n)\to \bigoplus_{p\geq0}D^{2p-k-1}(Y_{\mu_n},p)_{R_n}$ is called the equivariant higher analytic torsion for hermitian cubes.
\end{thm}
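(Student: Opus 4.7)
The plan is to verify the chain-homotopy identity
\[
d\Pi_k(\overline E)+\Pi_{k-1}(d\overline E)={\rm ch}_g(f_*\overline E)-f_{\mu_n,*}\bigl({\rm Td}_g(\overline{Tf})\bullet {\rm ch}_g(\overline E)\bigr)
\]
for every hermitian $k$-cube $\overline E$. Proposition~\ref{208} already establishes $d\Pi'_k(\overline E)+\Pi'_{k-1}(d\overline E)={\rm ch}_g(f_*\overline E)-R_k(\overline E)$, where $R_k(\overline E)$ denotes the term $\frac{(-1)^k}{2k!(2\pi i)^k}\int_{(\mathbb P^1)^k}{\rm ch}_g^0\bigl(f_*({\rm tr}_k\lambda\overline E)\bigr)\wedge C_k(\log|z_1|^2,\dots,\log|z_k|^2)$ subtracted in its statement. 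The task thus reduces to proving
\[
d\Pi''_k(\overline E)+\Pi''_{k-1}(d\overline E)=R_k(\overline E)-f_{\mu_n,*}\bigl({\rm Td}_g(\overline{Tf})\bullet {\rm ch}_g(\overline E)\bigr).
\]
This is the equivariant analogue of Roessler's computation in \cite[Section 3]{Roe}, and the strategy I would pursue is to follow his argument step by step, using Ma's equivariant torsion form and its differential equation in place of the classical Bismut--K\"ohler one.

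The computation of $d\Pi''_k(\overline E)$ I would carry out in two steps. First, apply identity (\ref{dc}) to $dC_{k+1}\bigl(T_g({\rm tr}_k\lambda\overline E),\log|z_1|^2,\dots,\log|z_k|^2\bigr)$, splitting the result into (i) a term containing $dT_g$ and (ii) $k$ terms containing $d\log|z_i|^2=-4\pi i(\delta_{z_i=\infty}-\delta_{z_i=0})$. Into (i) substitute the Bismut--K\"ohler--Ma differential equation for the product K\"ahler fibration $f_{\mathbb P}:X\times(\mathbb P^1)^k\to Y\times(\mathbb P^1)^k$, which rewrites $dT_g$ as the difference of ${\rm ch}_g^0\bigl(f_*({\rm tr}_k\lambda\overline E)\bigr)$ and a fibre integral of ${\rm Td}_g(Tf_{\mathbb P}){\rm ch}_g^0({\rm tr}_k\lambda\overline E)$. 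The ${\rm ch}_g$ summand, integrated against $C_k$, is exactly $R_k(\overline E)$, cancelling the remainder from Proposition~\ref{208}. The Todd--Chern summand, after Fubini (using $Tf_{\mathbb P}=p_1^*Tf$ so that ${\rm Td}_g(\overline{Tf})$ is constant in the $(\mathbb P^1)^k$-directions), matches the explicit expression for $\varphi({\rm ch}_g^k(\overline E))$ and produces $-f_{\mu_n,*}\bigl({\rm Td}_g(\overline{Tf})\bullet {\rm ch}_g(\overline E)\bigr)$, the Deligne product collapsing to ordinary wedge because ${\rm Td}_g(\overline{Tf})$ is closed.

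For the $k$ contributions (ii), the $\delta$-currents evaluate the integrand on the faces $\{z_i=0\}$ and $\{z_i=\infty\}$. The transgression bundle relations ${\rm tr}_k\lambda\overline E|_{z_i=0}={\rm tr}_{k-1}\lambda(\partial_i^0\overline E)$ and ${\rm tr}_k\lambda\overline E|_{z_i=\infty}={\rm tr}_{k-1}\lambda(\partial_i^{-1}\overline E)\oplus{\rm tr}_{k-1}\lambda(\partial_i^1\overline E)$, together with the naturality of the equivariant analytic torsion under restriction of a product K\"ahler fibration to a base submanifold (the same pointwise-on-the-base principle invoked in Lemma~\ref{delta}), rewrite each piece as an integral over $(\mathbb P^1)^{k-1}$ of a $C_k$ built from $T_g({\rm tr}_{k-1}\lambda(\partial_i^j\overline E))$. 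Matching the combinatorial signs against the definition $d=\sum_{i,j}(-1)^{i+j-1}\partial_i^j$ and reindexing identifies this contribution with $-\Pi''_{k-1}(d\overline E)$, closing the identity.

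The main obstacle is the identification of the Todd--Chern summand with $f_{\mu_n,*}\bigl({\rm Td}_g(\overline{Tf})\bullet {\rm ch}_g(\overline E)\bigr)$: this requires justifying Fubini between the vertical fibre integral along $f_{\mu_n}$ and the horizontal integral over $(\mathbb P^1)^k$, exploiting $Tf_{\mathbb P}=p_1^*Tf$ to factor out ${\rm Td}_g(\overline{Tf})$, and verifying that the Deligne product $\bullet$ collapses to ordinary wedge here because one factor is a closed $(p,p)$-form. The remaining delicate bookkeeping concerns the $(2\pi i)^{-r}$ normalisation in Ma's equation and the rescaling $2\Phi T_g=T'_g$ that relates our torsion convention to Ma's, both of which must be reconciled with the $(2\pi i)^k$ factors appearing in the definitions of $\Pi''_k$ and of $\varphi$. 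Well-definedness of $\Pi_k$ on $\widetilde{\Z}C_k$ (vanishing on degenerate cubes) has already been recorded in the text preceding the statement via \cite[Lemmas 3.3, 3.5]{Roe}.
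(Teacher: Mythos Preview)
Your proposal is correct and follows essentially the same route as the paper's own proof: split $\Pi_k=\Pi'_k+\Pi''_k$, invoke Proposition~\ref{208} for the $\Pi'$ part, then compute $d\Pi''_k$ by expanding $dC_{k+1}$ via (\ref{dc}) into the $dT_g$ summand (handled by Ma's differential equation to produce $R_k(\overline E)$ and the Todd--Chern fibre integral) and the $\delta_{z_j=0}/\delta_{z_j=\infty}$ summands (identified with $-\Pi''_{k-1}(d\overline E)$ through the face relations of ${\rm tr}_k\lambda\overline E$). The paper does not pause over the Fubini or normalisation issues you flag, treating them as routine.
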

\begin{proof}
Let $\overline{E}$ be a hermitian $k$-cube in $\widetilde{\Z}C_k^{f-\text{ac}}(X,\mu_n)$, we compute
\begin{multline*}
d_\mathcal{D}\circ\Pi_k(\overline{E})+\Pi_{k-1}\circ d(\overline{E})=d_\mathcal{D}\circ\Pi'_k(\overline{E})+\Pi'_{k-1}\circ d(\overline{E})+d_\mathcal{D}\circ\Pi''_k(\overline{E})+\Pi''_{k-1}\circ d(\overline{E})\\
={\rm ch}_g(f_*\overline{E})-\frac{(-1)^k}{2k!(2\pi i)^k}\int_{(\mathbb{P}^1)^k}{\rm ch}_g^0\Big(f_*\big({\rm tr}_k\circ\lambda(\overline{E})\big)\Big)\wedge C_{k}(\log\mid z_1\mid^2,\cdots,\log\mid z_{k}\mid^2)+d_\mathcal{D}\circ\Pi''_k(\overline{E})+\Pi''_{k-1}\circ d(\overline{E}).
\end{multline*}
and
\begin{multline*}
d_\mathcal{D}\circ\Pi''_k(\overline{E})=\frac{(-1)^{k+1}}{(k+1)!(2\pi i)^k}\int_{(\mathbb{P}^1)^k}d_\mathcal{D}C_{k+1}\Big(T_g\big({\rm tr}_k\circ\lambda(\overline{E})\big),\log\mid z_1\mid^2,\cdots,\log\mid z_{k}\mid^2\Big)\\
=\frac{(-1)^{k+1}}{(k+1)!(2\pi i)^k}\int_{(\mathbb{P}^1)^k}(-\frac{1}{2})(k+1)\bigg(d_\mathcal{D}T_g\big({\rm tr}_k\circ\lambda(\overline{E})\big)\bullet C_{k}(\log\mid z_1\mid^2,\cdots,\log\mid z_{k}\mid^2)\hfill\\
+\sum_{j=1}^{k}(-1)^{j}(-4\pi i)(\delta_{z_j=\infty}-\delta_{z_j=0}){\wedge C_{k}\Big(T_g\big({\rm tr}_k\circ\lambda(\overline{E})\big),\log\mid z_1\mid^2,\cdots,\widehat{\log\mid z_j\mid^2},\cdots,\log\mid z_k\mid^2\Big)}\bigg)\\
=\frac{(-1)^{k}}{k!(2\pi i)^{k-1}}\int_{(\mathbb{P}^1)^{k-1}}\sum_{j=1}^k(-1)^{j}\bigg(C_{k}\Big(T_g\big({\rm tr}_{k-1}\circ\lambda(\partial_j^{0}\overline{E})\big),\log\mid z_1\mid^2,\cdots,\log\mid z_{k-1}\mid^2\Big)\hfill\\
-C_{k}\Big(T_g\big({\rm tr}_{k-1}\circ\lambda(\partial_j^{-1}\overline{E})\oplus {\rm tr}_{k-1}\circ\lambda(\partial_j^{1}\overline{E})\big),\log\mid z_1\mid^2,\cdots,\log\mid z_{k-1}\mid^2\Big)\bigg)\\
+\frac{(-1)^k}{2k!(2\pi i)^k}\int_{(\mathbb{P}^1)^k}\Bigg(\bigg({\rm ch}_g^0\Big(f_*\big({\rm tr}_k\circ\lambda(\overline{E})\big)\Big)-\frac{1}{(2\pi i)^r}\int_{{X_{\mu_n}\times (\mathbb{P}^1)^k}/{Y_{\mu_n}\times (\mathbb{P}^1)^k}}{\rm Td}_g(\overline{Tf}){\rm ch}_g^0\big({\rm tr}_k\circ\lambda(\overline{E})\big)\bigg)\\
{\bullet C_{k}(\log\mid z_1\mid^2,\cdots,\log\mid z_{k}\mid^2)\Bigg)}\\
=-\Pi''_{k-1}\circ d(\overline{E})+\frac{(-1)^k}{2k!(2\pi i)^k}\int_{(\mathbb{P}^1)^k}{\rm ch}_g^0\Big(f_*\big({\rm tr}_k\circ\lambda(\overline{E})\big)\Big)\wedge C_{k}(\log\mid z_1\mid^2,\cdots,\log\mid z_{k}\mid^2)\hfill\\
-\frac{1}{(2\pi i)^r}\int_{X_{\mu_n}/{Y_{\mu_n}}}{\rm Td}_g(\overline{Tf})\bullet {\rm ch}_g(\overline{E}).
\end{multline*}
Combining these two computations, we finally get
$$d_\mathcal{D}\circ\Pi_k(\overline{E})+\Pi_{k-1}\circ d(\overline{E})={\rm ch}_g(f_*\overline{E})-\frac{1}{(2\pi i)^r}\int_{X_{\mu_n}/{Y_{\mu_n}}}{\rm Td}_g(\overline{Tf})\bullet {\rm ch}_g(\overline{E}).$$
So we are done.
\end{proof}

If we are given another fibration structure $\omega'$, then for any $f$-acyclic hermitian $k$-cube $\overline{E}$ in $\widehat{\mathcal{P}}(X,\mu_n)$, the short exact sequence
\begin{align*}
\xymatrix{0 \ar[r] & (f_*E,h'^{f_*E}) \ar[r]^-{\rm Id} & (f_*E,h^{f_*E}) \ar[r] & 0 \ar[r] & 0}
\end{align*}
forms a hermitian $(k+1)$-cube $H_f(\overline{E})$ on $Y$ such that the transgression bundle ${\rm tr}_{k+1}\Big(\lambda\big(H_f(\overline{E})\big)\Big)$ satisfies the relations
$${\rm tr}_{k+1}\Big(\lambda\big(H_f(\overline{E})\big)\Big)\mid_{Y\times \{0\}\times (\mathbb{P}^1)^k}={\rm tr}_{k}\big(\lambda(f_*E,h^{f_*E})\big),$$
$${\rm tr}_{k+1}\Big(\lambda\big(H_f(\overline{E})\big)\Big)\mid_{Y\times \{\infty\}\times (\mathbb{P}^1)^k}={\rm tr}_{k}\big(\lambda(f_*E,h'^{f_*E})\big)$$
and
$${\rm tr}_{k+1}\Big(\lambda\big(H_f(\overline{E})\big)\Big)\mid_{Y\times (\mathbb{P}^1)^{i}\times \{0\}\times (\mathbb{P}^1)^{k-i}}={\rm tr}_k\Big(\lambda\big(H_f(\partial_i^0\overline{E})\big)\Big),$$
$${\rm tr}_{k+1}\Big(\lambda\big(H_f(\overline{E})\big)\Big)\mid_{Y\times (\mathbb{P}^1)^{i}\times \{\infty\}\times (\mathbb{P}^1)^{k-i}}={\rm tr}_k\Big(\lambda\big(H_f(\partial_i^{-1}\overline{E})\big)\Big)\oplus {\rm tr}_k\Big(\lambda\big(H_f(\partial_i^{1}\overline{E})\big)\Big)$$
for $i=1,\cdots,k$. Therefore, the following map
$$\Pi^{(1)}_k(\overline{E})=\frac{(-1)^{k+1}}{2(k+1)!(2\pi i)^{k+1}}\int_{(\mathbb{P}^1)^{k+1}}{\rm ch}_g^0\Big({\rm tr}_{k+1}\circ\lambda\big(H_f(\overline{E})\big)\Big)\wedge C_{k+1}(\log\mid z_1\mid^2,\cdots,\log\mid z_{k+1}\mid^2)$$
which vanishes on degenerate cubes provides a chain homotopy of homological complexes between the maps ${\rm ch}_g\circ f_*$ and ${\rm ch}_g\circ f'_*$ where $f'_*(\overline{E}):=(f_*E,h'^{f_*E})$ is the push-forward with respect to the new fibration $\omega'$. Similarly, by projection formula, the map
\begin{multline*}
\Pi^{(3)}_k(\overline{E}):=\frac{(-1)^{k}}{2k!(2\pi i)^k}\int_{(\mathbb{P}^1)^k}\bigg(\Big(\frac{1}{(2\pi i)^r}\int_{{X_{\mu_n}\times (\mathbb{P}^1)^k}/{Y_{\mu_n}\times (\mathbb{P}^1)^k}}{\rm Td}_g(Tf,h'^{Tf},h^{Tf}){\rm ch}_g^0\big({\rm tr}_k\circ\lambda(\overline{E})\big)\Big)\\
{\bullet C_{k}(\log\mid z_1\mid^2,\cdots,\log\mid z_{k}\mid^2)\bigg)}
\end{multline*}
gives a chain homotopy of homological complexes between the maps ${f_{\mu_n}}_*\circ\big({\rm Td}_g(Tf,h^{Tf})\bullet{\rm ch}_g\big)$ and ${f_{\mu_n}}_*\circ\big({\rm Td}_g(Tf,h'^{Tf})\bullet {\rm ch}_g\big)$. Finally we write $\Pi^{(2)}_k=\Pi'^{(2)}_k+\Pi''^{(2)}_k$ for the chain homotopy defined in Theorem~\ref{209} between the maps ${\rm ch}_g\circ f'_*$ and ${f_{\mu_n}}_*\circ\big({\rm Td}_g(Tf,h'^{Tf})\bullet {\rm ch}_g\big)$ with respect to the new fibration $\omega'$. Then $\Pi^{(1)}_k+\Pi^{(2)}_k-\Pi^{(3)}_k$ defines a chain homotopy between ${\rm ch}_g\circ f_*$ and ${f_{\mu_n}}_*\circ\big({\rm Td}_g(Tf,h^{Tf})\bullet {\rm ch}_g\big)$. At the end of this subsection, we compare this homotopy $\Pi^{(1)}_k+\Pi^{(2)}_k-\Pi^{(3)}_k$ with $\Pi_k$ constructed in Theorem~\ref{209}.

\begin{defn}\label{homotopy}
Let $f, l$ be two morphisms of homological complexes $A_*\to B_*$, and let $h_1,h_2$ be two chain homotopies between $f$ and $l$. We say that $h_1$ is homotopic to $h_2$ if there exists a map $H: A_*\to B_{*+2}$ satisfying the condition that $Hd-dH=h_1-h_2$.
\end{defn}

Now, we denote by $H_f^{f'}(\overline{E})$ the following emi-$2$-cube of hermitian bundles on $Y\times (\mathbb{P}^1)^k$
$$\xymatrix{f'_*\big({\rm tr}_k\circ\lambda(\overline{E})\big) \ar[r]^-{\rm Id} \ar[d]^-{\rm Id} & {\rm tr}_k\circ\lambda\big(f'_*(\overline{E})\big) \ar[r] \ar[d]^-{\rm Id} & 0 \ar[d]\\
f_*\big({\rm tr}_k\circ\lambda(\overline{E})\big) \ar[r]^-{\rm Id} \ar[d] & {\rm tr}_k\circ\lambda\big(f_*(\overline{E})\big) \ar[r] \ar[d] & 0 \ar[d]\\
0 \ar[r] & 0 \ar[r] & 0.}$$
Changing the order of the $\mathbb{P}^1\times \mathbb{P}^1$ in $(\mathbb{P}^1)^{k+2}=(\mathbb{P}^1)^k\times \mathbb{P}^1\times \mathbb{P}^1$ so that $(\mathbb{P}^1)^{k+2}=\mathbb{P}^1\times \mathbb{P}^1\times (\mathbb{P}^1)^k$, we construct a hermitian bundle ${\rm tr}_2\big(H_f^{f'}(\overline{E})\big)$ on $Y\times (\mathbb{P}^1)^{k+2}$ as the second transgression bundle of $H_f^{f'}(\overline{E})$ such that it satisfies the following relations:
$${\rm tr}_2\big(H_f^{f'}(\overline{E})\big)\mid_{Y\times \{0\}\times (\mathbb{P}^1)^{k+1}}={\rm tr}_{k+1}\Big(\lambda\big(H_f(\overline{E})\big)\Big),$$
$${\rm tr}_2\big(H_f^{f'}(\overline{E})\big)\mid_{Y\times \{\infty\}\times (\mathbb{P}^1)^{k+1}}={\rm tr}_1\Big(H_f\big({\rm tr}_k\circ\lambda(\overline{E})\big)\Big),$$
$${\rm tr}_2\big(H_f^{f'}(\overline{E})\big)\mid_{Y\times \mathbb{P}^1\times \{0\}\times (\mathbb{P}^1)^k}={\rm tr}_1\big(H(\overline{E})\big),\quad {\rm tr}_2\big(H_f^{f'}(\overline{E})\big)\mid_{Y\times \mathbb{P}^1\times \{\infty\}\times (\mathbb{P}^1)^k}={\rm tr}_1\big(H'(\overline{E})\big)$$
and
$${\rm tr}_2\big(H_f^{f'}(\overline{E})\big)\mid_{Y\times (\mathbb{P}^1)^{i+1}\times \{0\}\times (\mathbb{P}^1)^{k-i}}={\rm tr}_2\big(H_f^{f'}(\partial_i^0\overline{E})\big),$$
$${\rm tr}_2\big(H_f^{f'}(\overline{E})\big)\mid_{Y\times (\mathbb{P}^1)^{i+1}\times \{\infty\}\times (\mathbb{P}^1)^{k-i}}={\rm tr}_2\big(H_f^{f'}(\partial_i^{-1}\overline{E})\big)\oplus {\rm tr}_2\big(H_f^{f'}(\partial_i^{1}\overline{E})\big)$$
for $i=1,\cdots,k$. We set
$$\Pi_{f,k}^{f'}(\overline{E}):=\frac{(-1)^{k+2}}{2(k+2)!(2\pi i)^{k+2}}\int_{(\mathbb{P}^1)^{k+2}}{\rm ch}_g^0\Big({\rm tr}_2\big(H_f^{f'}(\overline{E})\big)\Big)\wedge C_{k+2}(\log\mid z_1\mid^2,\cdots,\log\mid z_{k+2}\mid^2).$$
Then $\Pi_{f,k}^{f'}$ vanishes on degenerate $k$-cubes, and we obtain a map $$\Pi_{f,k}^{f'}: \widetilde{\Z}C_k^{f-\text{ac}}(X,\mu_n)\to \bigoplus_{p\geq0}D^{2p-k-2}(Y_{\mu_n},p)_{R_n}$$ by linear extension.

\begin{prop}\label{210}
Let notations and assumptions be as above. Then the chain homotopy $\Pi_k$ is homotopic to the chain homotopy $\Pi^{(1)}_k+\Pi^{(2)}_k-\Pi^{(3)}_k$.
\end{prop}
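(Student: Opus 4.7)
The plan is to construct an explicit second-order chain homotopy $H_k: \widetilde{\Z}C_k(X,\mu_n)\to \bigoplus_{p\geq0}D^{2p-k-2}(Y_{\mu_n},p)_{R_n}$ satisfying $H_kd-dH_k=\Pi_k-(\Pi^{(1)}_k+\Pi^{(2)}_k-\Pi^{(3)}_k)$. Because both chain homotopies decompose along the factorisation of the equivariant higher Bott--Chern form, I would mirror the splitting $\Pi_k=\Pi'_k+\Pi''_k$ and write $H_k=H'_k+H''_k$, where $H'_k$ addresses the transgression-theoretic pieces ($\Pi'_k$ versus $\Pi^{(1)}_k+\Pi'^{(2)}_k$) and $H''_k$ addresses the torsion pieces ($\Pi''_k$ versus $\Pi''^{(2)}_k-\Pi^{(3)}_k$).

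For $H'_k$, the natural candidate is precisely the map $\Pi_{f,k}^{f'}$ already defined above. A computation of $d\circ\Pi_{f,k}^{f'}+\Pi_{f,k-1}^{f'}\circ d$ proceeds along the lines of Proposition~\ref{208}: one applies $d$ to the current $C_{k+2}(\log\mid z_1\mid^2,\ldots,\log\mid z_{k+2}\mid^2)$ via $d\log\mid z_j\mid^2=-2i\pi(\delta_{z_j=\infty}-\delta_{z_j=0})$, producing $2(k+2)$ boundary contributions on the coordinate hyperplanes of $(\mathbb{P}^1)^{k+2}$. The restriction formulas listed for ${\rm tr}_2\big(H_f^{f'}(\overline{E})\big)$ identify each piece: the four faces at $z_1,z_2\in\{0,\infty\}$ reassemble, after integration against the appropriate $C_{k+1}$-currents, into $\Pi'_k(\overline{E})-\Pi'^{(2)}_k(\overline{E})-\Pi^{(1)}_k(\overline{E})$ plus an auxiliary Bott--Chern term coming from ${\rm tr}_1\big(H_f({\rm tr}_k\circ\lambda(\overline{E}))\big)$ that will be absorbed by the $H''_k$ computation, while the remaining $z_{i+2}=0,\infty$ faces for $i\geq1$ combine via the cubical boundary identity into $-\Pi_{f,k-1}^{f'}\circ d(\overline{E})$.

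For $H''_k$ I would integrate currents built from the equivariant analytic torsion forms of the two K\"ahler fibration structures on $X\times(\mathbb{P}^1)^{k+1}\to Y\times(\mathbb{P}^1)^{k+1}$ applied to ${\rm tr}_{k+1}\circ\lambda\big(H_f(\overline{E})\big)$, combined with the $\Delta$-form of Lemma~\ref{delta}; schematically,
\begin{align*}
H''_k(\overline{E})=\frac{(-1)^{k+2}}{(k+2)!(2\pi i)^{k+1}}\int_{(\mathbb{P}^1)^{k+1}}C_{k+2}\big(T_g^\omega-T_g^{\omega'}-\Delta,\log\mid z_1\mid^2,\ldots,\log\mid z_{k+1}\mid^2\big).
\end{align*}
Applying $d$, the boundary contributions from the $\log\mid z_j\mid^2$-currents combine, via the restriction compatibility of $T_g$ and of $\Delta$ (Lemma~\ref{delta}), with the auxiliary Bott--Chern term left over from the $H'_k$ computation into $\Pi''_k(\overline{E})-\Pi''^{(2)}_k(\overline{E})+\Pi^{(3)}_k(\overline{E})-H''_{k-1}\circ d(\overline{E})$, while the bulk piece $dT_g^\omega-dT_g^{\omega'}-d\Delta$ collapses by Ma's anomaly formula (Theorem~\ref{207}) to the required combination of ${\rm ch}_g$ of pushforward cubes and Todd anomaly integrals. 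The main obstacle is precisely this last step: controlling the behaviour of $T_g^\omega-T_g^{\omega'}-\Delta$ under restrictions to $\{0\}$ and $\{\infty\}$ in the various $\mathbb{P}^1$ factors, and matching the output of Theorem~\ref{207} face-by-face with $\Pi^{(3)}_k$. The restriction property of $\Delta$ established in Lemma~\ref{delta}, together with the analogous restriction behaviour of Bismut--Ma's torsion form (which follows from the ``limit of smooth forms'' description used in its construction), is what ultimately makes the bookkeeping close.
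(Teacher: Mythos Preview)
Your choice $H'_k=\Pi_{f,k}^{f'}$ and the computation of its differential are correct and match the paper exactly. The problem is with your proposed $H''_k$.

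The bundle ${\rm tr}_{k+1}\circ\lambda(H_f(\overline{E}))$ lives on $Y\times(\mathbb{P}^1)^{k+1}$, because $H_f(\overline{E})$ is a cube in $\widehat{\mathcal{P}}(Y,\mu_n)$, not in $\widehat{\mathcal{P}}(X,\mu_n)$. So it makes no sense to feed it into the torsion form of the fibration $X\times(\mathbb{P}^1)^{k+1}\to Y\times(\mathbb{P}^1)^{k+1}$: analytic torsion takes a bundle on the total space, not on the base. There is no natural $(k+1)$-cube on $X$ recording the change of K\"ahler fibration, precisely because that change happens downstairs. Your $H''_k$ as written is therefore undefined, and the ``main obstacle'' you flag is a symptom of this.

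The paper avoids the issue by not introducing any torsion-form second homotopy at all. One simply writes
\[
\Pi''_k(\overline{E})-\Pi''^{(2)}_k(\overline{E})
=\frac{(-1)^{k+1}}{(k+1)!(2\pi i)^k}\int_{(\mathbb{P}^1)^k}C_{k+1}\bigl(T_g(\omega,h)-T_g(\omega',h),\log|z_1|^2,\ldots,\log|z_k|^2\bigr)
\]
with the torsions of ${\rm tr}_k\circ\lambda(\overline{E})$ on $(\mathbb{P}^1)^k$, and substitutes the anomaly formula pointwise. This produces three pieces: the ${\rm ch}_g^0\bigl({\rm tr}_1(H_f({\rm tr}_k\circ\lambda(\overline{E})))\bigr)$ term that cancels the leftover from your $H'_k$ computation, a term that is exactly $-\Pi^{(3')}_k(\overline{E})$ (the variant of $\Pi^{(3)}_k$ with the Todd anomaly placed inside $C_{k+1}$ rather than multiplied via $\bullet$; one first shows $\Pi^{(3)}_k$ and $\Pi^{(3')}_k$ are naturally homotopic using associativity of $\bullet$ up to homotopy), and the term $\int_{(\mathbb{P}^1)^k}C_{k+1}(d\Delta,\ldots)$. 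Only this last piece needs a second-order homotopy, and the paper uses
\[
\Delta_k(\overline{E})=\frac{(-1)^{k}}{(k+1)!(2\pi i)^k}\int_{(\mathbb{P}^1)^k}C_{k+1}\bigl(\Delta(f,{\rm tr}_k\circ\lambda(\overline{E}),\omega,\omega'),\log|z_1|^2,\ldots,\log|z_k|^2\bigr),
\]
an integral over $(\mathbb{P}^1)^k$, not $(\mathbb{P}^1)^{k+1}$; Lemma~\ref{delta} is precisely what makes $\Delta_{k-1}\circ d-d\circ\Delta_k$ produce the required $d\Delta$ term. The full second homotopy is then $\Pi_{f,k}^{f'}+\Delta_k$ together with the natural homotopy between $\Pi^{(3)}_k$ and $\Pi^{(3')}_k$.
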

\begin{proof}
Firstly, we set
\begin{multline*}
\Pi^{(3')}_k(\overline{E}):=\frac{(-1)^{k+1}}{(k+1)!(2\pi i)^k}\int_{(\mathbb{P}^1)^k}C_{k+1}\Big(\frac{1}{(2\pi i)^r}\int_{{X_{\mu_n}\times (\mathbb{P}^1)^k}/{Y_{\mu_n}\times (\mathbb{P}^1)^k}}{\rm
Td}_g(Tf,h'^{Tf},h^{Tf}){\rm ch}_g^0\big({\rm tr}_k\circ\lambda(\overline{E})\big)\\
,\log\mid z_1\mid^2,\cdots,\log\mid z_{k}\mid^2\Big).
\end{multline*}
It also defines a chain homotopy between the maps ${f_{\mu_n}}_*\circ\big({\rm Td}_g(Tf,h^{Tf})\bullet{\rm ch}_g\big)$ and ${f_{\mu_n}}_*\circ\big({\rm Td}_g(Tf,h'^{Tf})\bullet {\rm ch}_g\big)$. Since the product $\bullet$ on Deligne complex is graded commutative and is associative up to homotopy, we claim that $\Pi^{(3')}_k(\overline{E})$ is homotopic to $\Pi^{(3)}_k(\overline{E})$ so that we are left to show that $\Pi_k$ is homotopic to $\Pi^{(1)}_k+\Pi^{(2)}_k-\Pi^{(3')}_k$. Actually, our claim follows from the fact that $d_\mathcal{D}\Pi^{(3)}_k(\overline{E})-d_\mathcal{D}\Pi^{(3')}_k(\overline{E})=\Pi^{(3)}_{k-1}(-d\overline{E})-\Pi^{(3')}_{k-1}(-d\overline{E})$ and \cite[Remark 2.4, Lemma 2.5]{T3}.

Now, let $\overline{E}$ be a hermitian $k$-cube in $\widehat{\mathcal{P}}(X,\mu_n)$ which is $f$-acyclic. We compute
\begin{multline*}
d_\mathcal{D}\circ\Pi_{f,k}^{f'}(\overline{E})=\frac{(-1)^{k+2}}{2(k+2)!(2\pi i)^{k+2}}\int_{(\mathbb{P}^1)^{k+2}}{\rm ch}_g^0\Big({\rm tr}_2\big(H_f^{f'}(\overline{E})\big)\Big)\wedge d_\mathcal{D}C_{k+2}(\log\mid z_1\mid^2,\cdots,\log\mid z_{k+2}\mid^2)\\
=\frac{(-1)^{k+2}}{2(k+2)!(2\pi i)^{k+2}}\int_{(\mathbb{P}^1)^{k+2}}{\rm ch}_g^0\Big({\rm tr}_2\big(H_f^{f'}(\overline{E})\big)\Big)\wedge\big((-\frac{1}{2})(k+2)\sum_{j=1}^{k+2}(-1)^{j-1}(-4\pi i)(\delta_{z_j=\infty}-\delta_{z_j=0})\hfill\\
{\wedge C_{k+1}(\log\mid z_1\mid^2,\cdots,\widehat{\log\mid z_j\mid^2},\cdots,\log\mid z_{k+2}\mid^2)\big)}\\
=\Pi_{f,k-1}^{f'}\circ d(\overline{E})-\frac{(-1)^{k+1}}{2(k+1)!(2\pi i)^{k+1}}\int_{(\mathbb{P}^1)^{k+1}}\Bigg[\bigg({\rm ch}_g^0\Big({\rm tr}_1\big(H(\overline{E})\big)\Big)-{\rm ch}_g^0\Big({\rm tr}_1\big(H'(\overline{E})\big)\Big)\bigg)\hfill\\
{\wedge C_{k+1}(\log\mid z_1\mid^2,\cdots,\log\mid z_{k+1}\mid^2)\Bigg]}\\
+\frac{(-1)^{k+1}}{2(k+1)!(2\pi i)^{k+1}}\int_{(\mathbb{P}^1)^{k+1}}\Bigg[\Bigg({\rm ch}_g^0\bigg({\rm tr}_{k+1}\Big(\lambda\big(H_f(\overline{E})\big)\Big)\bigg)-{\rm ch}_g^0\bigg({\rm tr}_1\Big(H_f\big({\rm tr}_k\circ\lambda(\overline{E})\big)\Big)\bigg)\Bigg)\\
{\wedge C_{k+1}(\log\mid z_1\mid^2,\cdots,\log\mid z_{k+1}\mid^2)\Bigg]}\\
=\Pi_{f,k-1}^{f'}\circ d(\overline{E})-\Pi'_k(\overline{E})+\Pi'^{(2)}_k(\overline{E})+\Pi^{(1)}_k(\overline{E})\hfill\\
-\frac{(-1)^{k+1}}{2(k+1)!(2\pi i)^{k+1}}\int_{(\mathbb{P}^1)^{k+1}}{\rm ch}_g^0\bigg({\rm tr}_1\Big(H_f\big({\rm tr}_k\circ\lambda(\overline{E})\big)\Big)\bigg)\wedge
C_{k+1}(\log\mid z_1\mid^2,\cdots,\log\mid z_{k+1}\mid^2).
\end{multline*}

On the other hand, according to the anomaly formula Theorem~\ref{207}, we have
\begin{multline*}
\Pi''_k(\overline{E})-\Pi''^{(2)}_k(\overline{E})\\
=\frac{(-1)^{k+1}}{(k+1)!(2\pi i)^k}\int_{(\mathbb{P}^1)^k}C_{k+1}\Big(T_g\big({\rm tr}_k\circ\lambda(\overline{E})\big),\log\mid z_1\mid^2,\cdots,\log\mid z_{k}\mid^2\Big)\hfill\\
-\frac{(-1)^{k+1}}{(k+1)!(2\pi i)^k}\int_{(\mathbb{P}^1)^k}C_{k+1}\Big(T'_g\big({\rm tr}_k\circ\lambda(\overline{E})\big),\log\mid z_1\mid^2,\cdots,\log\mid z_{k}\mid^2\Big)\\
=\frac{(-1)^{k+1}}{(k+1)!(2\pi i)^k}\int_{(\mathbb{P}^1)^k}C_{k+1}\Bigg(\frac{1}{4\pi i}\int_{{Y_{\mu_n}\times (\mathbb{P}^1)^{k+1}}/{Y_{\mu_n}\times (\mathbb{P}^1)^{k}}}{\rm ch}_g^0\bigg({\rm tr}_1\Big(H_f\big({\rm tr}_k\circ\lambda(\overline{E})\big)\Big)\bigg)\log \mid z_{0}\mid^2\hfill\\
,\log\mid z_1\mid^2,\cdots,\log\mid z_{k}\mid^2\Bigg)\\
-\frac{(-1)^{k+1}}{(k+1)!(2\pi i)^k}\int_{(\mathbb{P}^1)^k}C_{k+1}\Big(\frac{1}{(2\pi i)^r}\int_{{X_{\mu_n}\times (\mathbb{P}^1)^k}/{Y_{\mu_n}\times (\mathbb{P}^1)^k}}{\rm
Td}_g(Tf,h'^{Tf},h^{Tf}){\rm ch}_g^0\big({\rm tr}_k\circ\lambda(\overline{E})\big)\\
,\log\mid z_1\mid^2,\cdots,\log\mid z_{k}\mid^2\Big)\\
+\frac{(-1)^{k+1}}{(k+1)!(2\pi i)^k}\int_{(\mathbb{P}^1)^k}C_{k+1}\Big(d_\mathcal{D}\Delta\big(f,{\rm tr}_k\circ\lambda(\overline{E}),\omega,\omega'\big),\log\mid z_1\mid^2,\cdots,\log\mid z_{k}\mid^2\Big)\\
=\frac{(-1)^{k+1}}{2(k+1)!(2\pi i)^{k+1}}\int_{(\mathbb{P}^1)^{k+1}}{\rm ch}_g^0\bigg({\rm tr}_1\Big(H_f\big({\rm tr}_k\circ\lambda(\overline{E})\big)\Big)\bigg)\wedge C_{k+1}(\log\mid z_1\mid^2,\cdots,\log\mid z_{k+1}\mid^2)\hfill\\
+\frac{(-1)^{k+1}}{(k+1)!(2\pi i)^k}\int_{(\mathbb{P}^1)^k}C_{k+1}\Big(d_\mathcal{D}\Delta\big(f,{\rm tr}_k\circ\lambda(\overline{E}),\omega,\omega'\big),\log\mid z_1\mid^2,\cdots,\log\mid z_{k}\mid^2\Big)-\Pi^{(3')}_k(\overline{E}).
\end{multline*}

We formally define a product $C_{k+1}\Big(\Delta\big(f,{\rm tr}_k\circ\lambda(\overline{E}),\omega,\omega'\big),\log\mid z_1\mid^2,\cdots,\log\mid z_{k}\mid^2\Big)$ in a similar way to $C_{k+1}(\cdot,\ldots,\cdot)$ like follows.
\begin{align}\label{NewC}
&C_{k+1}\Big(\Delta\big(f,{\rm tr}_k\circ\lambda(\overline{E}),\omega,\omega'\big),\log\mid z_1\mid^2,\cdots,\log\mid z_{k}\mid^2\Big)\notag\\
=&-(-\frac{1}{2})^{k}\sum_{\sigma\in \mathfrak{S}_k}(-1)^\sigma \Delta\bullet(\log\mid z_{\sigma(1)}\mid^2\bullet(\log\mid z_{\sigma(2)}\mid^2\bullet(\cdots \log\mid z_{\sigma(k)}\mid^2)\cdots)\notag\\
&-(-\frac{1}{2})^{k}\sum_{\sigma\in \mathfrak{S}_k}(-1)^\sigma \log\mid z_{\sigma(1)}\mid^2\bullet(\Delta\bullet(\log\mid z_{\sigma(2)}\mid^2\bullet(\cdots \log\mid z_{\sigma(k)}\mid^2)\cdots)\notag\\
&\cdots\notag\\
&-(-\frac{1}{2})^{k}\sum_{\sigma\in \mathfrak{S}_k}(-1)^\sigma \log\mid z_{\sigma(1)}\mid^2\bullet(\log\mid z_{\sigma(2)}\mid^2\bullet(\cdots \log\mid z_{\sigma(k)}\mid^2\bullet\Delta)\cdots)
\end{align}

Then we set
$$\Delta_k(\overline{E})=\frac{(-1)^{k}}{(k+1)!(2\pi i)^k}\int_{(\mathbb{P}^1)^k}C_{k+1}\Big(\Delta\big(f,{\rm tr}_k\circ\lambda(\overline{E}),\omega,\omega'\big),\log\mid z_1\mid^2,\cdots,\log\mid z_{k}\mid^2\Big),$$
and it is readily checked by Lemma~\ref{delta} that
$$\Delta_{k-1}(d\overline{E})-d_\mathcal{D}\Delta_k(\overline{E})=\frac{(-1)^{k+1}}{(k+1)!(2\pi i)^k}\int_{(\mathbb{P}^1)^k}C_{k+1}\Big(d_\mathcal{D}\Delta\big(f,{\rm tr}_k\circ\lambda(\overline{E}),\omega,\omega'\big),\log\mid z_1\mid^2,\cdots,\log\mid z_{k}\mid^2\Big).$$

Combing all the above computations, we finally get
\begin{multline*}
(\Pi_{f,k-1}^{f'}+\Delta_{k-1})\circ d(\overline{E})-d_\mathcal{D}\circ(\Pi_{f,k}^{f'}+\Delta_k)(\overline{E})\\
=-\Pi'^{(2)}_k(\overline{E})+\Pi'_k(\overline{E})-\Pi^{(1)}_k(\overline{E})+\Delta_{k-1}(d\overline{E})-d_\mathcal{D}\Delta_k(\overline{E})\hfill\\
+\frac{(-1)^{k+1}}{2(k+1)!(2\pi i)^{k+1}}\int_{(\mathbb{P}^1)^{k+1}}{\rm ch}_g^0\bigg({\rm tr}_1\Big(H_f\big({\rm tr}_k\circ\lambda(\overline{E})\big)\Big)\bigg)\wedge C_{k+1}(\log\mid z_1\mid^2,\cdots,\log\mid z_{k+1}\mid^2)\\
=-\Pi'^{(2)}_k(\overline{E})+\Pi'_k(\overline{E})-\Pi^{(1)}_k(\overline{E})-\Pi''^{(2)}_k(\overline{E})+\Pi''_k(\overline{E})+\Pi^{(3')}_k(\overline{E})\\
=\Pi_k(\overline{E})-\big(\Pi^{(1)}_k(\overline{E})+\Pi^{(2)}_k(\overline{E})-\Pi^{(3')}_k(\overline{E})\big).
\end{multline*}
So we are done.
\end{proof}

\subsection{Direct image map between arithmetic K-groups}
In this subsection, we define the direct image map between arithmetic K-groups of regular $\mu_n$-projective arithmetic schemes by means of the equivariant higher analytic torsion for hermitian cubes constructed in last subsection.

Let now $X$ and $Y$ be two regular $\mu_n$-projective schemes over an arithmetic ring $(D,\Sigma,F_\infty)$. Assume that $f: X\to Y$ is an equivariant and flat morphism from $X$ to $Y$ such that $f$ is smooth over the generic fibre. Notice that the chain homotopy $$\Pi_*: \widetilde{\Z}C_*^{f-\text{ac}}\big(X(\C),\mu_n\big)\to \bigoplus_{p\geq0}D^{2p-*-1}(Y(\C)_{\mu_n},p)_{R_n}$$ is $\sigma$-invariant and the following diagrams
$$\xymatrix{\widehat{S}^{f-\text{ac}}(X,\mu_n) \ar[r]^-{\rm Hu} \ar[d]^-{f_*} & \Z\widehat{S}_*^{f-\text{ac}}(X,\mu_n) \ar[r]^-{\rm Cub} \ar[d]^-{f_*} & \mathcal{K}\big(\widetilde{\Z}C_*^{f-\text{ac}}(X,\mu_n)[-1]\big) \ar[d]^-{f_*} \\
\widehat{S}(Y,\mu_n) \ar[r]^-{\rm Hu} & \Z\widehat{S}_*(Y,\mu_n) \ar[r]^-{\rm Cub} & \mathcal{K}\big(\widetilde{\Z}C_*(Y,\mu_n)[-1]\big)}$$
are commutative, the chain homotopy $\Pi_*$ induces a simplicial homotopy between the maps $\widetilde{{\rm ch}}_g\circ f_*$ and ${f_{\mu_n}}_*\circ {\rm Td}_g(\overline{Tf})\bullet(\cdot)\circ \widetilde{{\rm ch}}_g$ in the following square
$$\xymatrix{ \widehat{S}^{f-\text{ac}}(X,\mu_n) \ar[rr]^-{\widetilde{{\rm ch}}_g} \ar[d]^-{f_*} && \mathcal{K}\big(\bigoplus_{p\geq0}D^{2p-*}(X_{\mu_n},p)[-1]_{R_n}\big) \ar[d]^-{{f_{\mu_n}}_*\circ {\rm Td}_g(\overline{Tf})\bullet(\cdot)}\\
\widehat{S}(Y,\mu_n) \ar[rr]^-{\widetilde{{\rm ch}}_g} && \mathcal{K}\big(\bigoplus_{p\geq0}D^{2p-*}(Y_{\mu_n},p)[-1]_{R_n}\big).}$$
To see the construction of this simplicial homotopy and general theory on homotopies in the category of simplical abelian groups, the reader is referred to \cite[Section 2.1, Section 2.3, Section 3.2]{GJ}, especially \cite[p160, p162 Prop. 2.18, p72 Prop. 1.8 Cor. 1.9]{GJ}

We remark that, according to the construction given in \cite{GJ}, the resulting simplicial homotopy is unique up to a homotopy in a strong sense: let $h_1, h_2$ be two simplicial homotopies arising from $\Pi_*$, then there exists a homotopy 
$$\xymatrix{\widetilde{H}:\quad\widehat{S}^{f-\text{ac}}(X,\mu_n)\times \Delta^1\times \Delta^1 \ar[rr] && \mathcal{K}\big(\bigoplus_{p\geq0}D^{2p-*}(Y_{\mu_n},p)[-1]_{R_n}\big)}$$ such that 
$\widetilde{H}(\cdot, \cdot, 0)=h_1$, $\widetilde{H}(\cdot, \cdot, 1)=h_2$, $\widetilde{H}(\cdot, 0, \cdot)$ is the constant homotopy on $\widetilde{{\rm ch}}_g\circ f_*$ and $\widetilde{H}(\cdot, 1, \cdot)$ is the constant homotopy on ${f_{\mu_n}}_*\circ {\rm Td}_g(\overline{Tf})\bullet(\cdot)\circ \widetilde{{\rm ch}}_g$ (cf. \cite[Prop. 3.8]{GJ}). Thus, applying the geometric realization construction to the above simplicial square, we get a continuous map between homotopy fibres $$\mid f\mid: \text{   homotopy fibre of }\mid \widetilde{{\rm ch}}_g^X\mid\longrightarrow \text{homotopy fibre of }\mid \widetilde{{\rm ch}}_g^Y\mid$$ which is unique up to a homotopy. So we may have a well-defined direct image map between arithmetic $K$-groups as follows.

\begin{defn}\label{211}
For $m\geq1$, the direct image map $f_*: \widehat{K}_m(X,\mu_n)\to \widehat{K}_m(Y,\mu_n)$ is defined as the homomorphism of abelian groups induced by the map $\mid f\mid$ at the level of homotopy groups. 
\end{defn}

\begin{rem}\label{flat}
The condition ``flatness" of the map $f$ is only used to guarantee that the direct image of a $f$-acyclic bundle is locally free. By introducing the arithmetic K$'$-theory and using the isomorphisms $\widehat{K}_m(X,\mu_n)\cong \widehat{K}'_m(X,\mu_n)$ which hold for regular schemes, the condition ``flatness" can certainly be removed.
\end{rem}

To study the direct image map up to torsion, we need the following lemma.

\begin{lem}\label{NewCA}
Consider the following diagram of homological complexes
$$\xymatrix{ A_* \ar@/_/[d]_-{f_1} \ar@/^/[d]^-{f_2} \ar[rr]^-{i} && B_* \ar@/_/[d]_-{l_1} \ar@/^/[d]^-{l_2} \\
C_* \ar[rr]^-{j} && D_*.}$$
Assume that $j\circ {f_1}$ (resp. $j\circ {f_2}$) is homotopic to $l_1\circ i$ (resp. $l_2\circ i$) via the chain homotopy $h_1$ (resp. $h_2$), and that $f_1$ (resp. $l_1$) is homotopic to $f_2$ (resp. $l_2$) via the chain homotopy $\pi_f$ (resp. $\pi_l$). Suppose that the chain homotopy $j\circ \pi_f+h_2-\pi_l\circ i$ is homotopic to the chain homotopy $h_1$, then the morphism on simple complexes
$$(f_1,l_1,h_1): s_*(i: A_*\to B_*)\to s_*(j: C_*\to D_*)$$ is chain homotopic to $(f_2,l_2,h_2)$.
\end{lem}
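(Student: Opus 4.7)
The plan is to construct an explicit chain homotopy $K$ between the two induced morphisms of simple complexes by assembling the three pieces of homotopy data provided. Recall that, for a chain map $i:A_*\to B_*$, the simple complex $s_*(i)$ has $s_n(i)=A_{n-1}\oplus B_n$ (with a sign-adjusted differential combining $d_A$, $d_B$ and $i$), and a triple $(f,g,h)$ consisting of chain maps $f:A_*\to C_*$, $g:B_*\to D_*$ together with a chain homotopy $h$ from $j\circ f$ to $g\circ i$ induces the chain map $s_*(i)\to s_*(j)$ sending $(a,b)\mapsto (f(a),\,g(b)+h(a))$.

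Next I would define $K:s_n(i)\to s_{n+1}(j)$ by the formula
\[
K(a,b):=\bigl(\pi_f(a),\,\pi_g(b)+H(a)\bigr),
\]
where $H:A_*\to D_{*+2}$ is the degree $+2$ map supplied by Definition~\ref{homotopy} applied to the standing hypothesis, so that
\[
Hd-dH \;=\; h_1-(j\circ\pi_f+h_2-\pi_g\circ i).
\]
A direct computation of $dK+Kd$ on a pair $(a,b)$ then breaks into three pieces. The $A$-component reproduces $f_1-f_2$ from the defining identity $d\pi_f+\pi_f d=f_1-f_2$ of the chain homotopy $\pi_f$. The $B$-contribution to the $D$-component reproduces $g_1-g_2$ from the analogous identity for $\pi_g$. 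The remaining contribution from $A$ into the $D$-component collects exactly into $j\circ\pi_f-\pi_g\circ i+dH-Hd$, which by the choice of $H$ equals $h_1-h_2$.

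Putting these three pieces together yields $dK+Kd=(f_1,g_1,h_1)-(f_2,g_2,h_2)$ as morphisms $s_*(i)\to s_*(j)$, which is precisely the claim. The argument is otherwise straightforward: the only real subtlety, and the main bookkeeping obstacle, is the consistent tracking of signs coming from the shift $A_{n-1}\subset s_n(i)$ and from the conventions used for the differential of a mapping cone and for a chain homotopy. Once these signs are fixed uniformly at the outset, the three homotopy relations telescope cleanly to produce the desired identity, and no further input beyond the hypothesis and Definition~\ref{homotopy} is required.
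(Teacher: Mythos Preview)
Your proposal is correct and follows essentially the same approach as the paper: both construct the required homotopy on the simple complex from the triple $(\pi_f,\pi_g,H)$ and then verify the homotopy identity by a direct computation. The paper's explicit formula is $\widetilde{H}(a,b)=(\pi_f(a),\,-\pi_g(b)+H(a))$ with the convention $s_k=A_k\oplus B_{k+1}$ and $d(a,b)=(da,\,i(a)-db)$; under these conventions the sign on $\pi_g(b)$ is negative, and your cross-term should read $j\circ\pi_f-\pi_g\circ i+(Hd-dH)$ rather than $dH-Hd$ in order to yield $h_1-h_2$, so you will want to fix those signs when you write out the computation in full.
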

\begin{proof}
Let $(a,b)\in A_k\bigoplus B_{k+1}$, the morphism $(f_1,l_1,h_1)$ (resp. $(f_2,l_2,h_2)$) sends $(a,b)$ to $\big(f_1(a),l_1(b)+h_1(a)\big)$ (resp. $\big(f_2(a),l_2(b)+h_2(a)\big)$). Let $H: A_*\to D_{*+2}$ be the homotopy such that
$$Hd-dH=h_1-(j\circ \pi_f+h_2-\pi_l\circ i),$$
and we define $\widetilde{H}(a,b)=\big(\pi_f(a),-\pi_l(b)+H(a)\big)$. Then we compute
\begin{align*}
d\widetilde{H}(a,b)=&d\big(\pi_f(a),-\pi_l(b)+H(a)\big)\\
=&\big(d\pi_f(a),j\circ \pi_f(a)+d\pi_l(b)-dH(a)\big)\\
=&\big(f_1(a)-f_2(a)-\pi_f(da),l_1(b)-l_2(b)-\pi_l(db)-Hd(a)+h_1(a)-h_2(a)+\pi_l\circ i(a)\big)\\
=&\big(f_1(a),l_1(b)+h_1(a)\big)-\big(f_2(a),l_2(b)+h_2(a)\big)-\big(\pi_fd(a),\pi_l(db)-\pi_l\circ i(a)+Hd(a)\big)\\
=&\big(f_1(a),l_1(b)+h_1(a)\big)-\big(f_2(a),l_2(b)+h_2(a)\big)-\widetilde{H}(da,i(a)-db)\\
=&\big(f_1(a),l_1(b)+h_1(a)\big)-\big(f_2(a),l_2(b)+h_2(a)\big)-\widetilde{H}d(a,b).
\end{align*}
So we are done.
\end{proof}

\begin{cor}\label{212}
Let notations and assumptions be as above, then the direct image map $f_*: \widehat{K}_m(X,\mu_n)_\Q\to \widehat{K}_m(Y,\mu_n)_\Q$ without torsion is independent of the choice of the K\"{a}hler fibration structure.
\end{cor}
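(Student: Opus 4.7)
The plan is to reduce the claim to chain homotopy of simple complexes and then apply Lemma~\ref{NewCA}. By Remark~\ref{206}(iv) one has, for $m\geq 1$, an isomorphism $\widehat{K}_m(X,\mu_n)_\Q\cong H_m(s({\rm ch}_g),\Q)$, so it suffices to show that the chain maps $s({\rm ch}_g^X)\to s({\rm ch}_g^Y)$ induced by the direct image constructions for $\omega$ and $\omega'$ coincide on homology after tensoring by $\Q$. In the notation of Lemma~\ref{NewCA} these two maps are $(f_1,g_1,h_1)$ and $(f_2,g_2,h_2)$, where $f_1=f_*$ and $f_2=f'_*$ are the pushforwards using $\omega$ and $\omega'$, where $g_1={f_{\mu_n}}_*\circ\big({\rm Td}_g(Tf,h^{Tf})\bullet(\cdot)\big)$ and $g_2={f_{\mu_n}}_*\circ\big({\rm Td}_g(Tf,h'^{Tf})\bullet(\cdot)\big)$, and where $h_1=\Pi$ and $h_2=\Pi^{(2)}$ are the chain homotopies supplied by Theorem~\ref{209} for $\omega$ and $\omega'$, respectively.

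Next I would produce the auxiliary chain homotopies $\pi_f$ and $\pi_g$ demanded by Lemma~\ref{NewCA}. For $\pi_f$, I send a hermitian $k$-cube $\overline{E}$ to the hermitian $(k+1)$-cube $H_f(\overline{E})$ introduced before Proposition~\ref{210}, whose first direction is the emi-$1$-cube $0\to(f_*E,h'^{f_*E})\to(f_*E,h^{f_*E})\to 0\to 0$ and whose faces in the remaining directions are obtained by applying $H_f$ to the faces of $\overline{E}$. A direct bookkeeping of the face maps gives $dH_f+H_fd=f_*-f'_*$, so $\pi_f:=H_f$ is a chain homotopy between $f_1$ and $f_2$ (and plainly kills degenerate cubes). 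For $\pi_g$, I use the Bismut-Gillet-Soul\'{e} secondary Todd form and set $\pi_g(\alpha):={f_{\mu_n}}_*\big({\rm Td}_g(Tf,h^{Tf},h'^{Tf})\bullet\alpha\big)$; the defining differential equation for the secondary Todd form together with the projection formula yields $d\pi_g+\pi_gd=g_1-g_2$.

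With these in hand I would identify $j\circ\pi_f$ and $\pi_g\circ i$ with the auxiliary homotopies appearing in Proposition~\ref{210}. Unwinding the defining integral of ${\rm ch}_g$ on the cube $H_f(\overline{E})$ reproduces the formula for $\Pi^{(1)}_k(\overline{E})$ verbatim, so $j\circ\pi_f=\Pi^{(1)}$. Unwinding ${f_{\mu_n}}_*\big({\rm Td}_g(Tf,h^{Tf},h'^{Tf})\bullet{\rm ch}_g(\overline{E})\big)$ reproduces $\Pi^{(3')}_k(\overline{E})$, and $\Pi^{(3')}\simeq\Pi^{(3)}$ by the argument already recorded in the proof of Proposition~\ref{210}, so $\pi_g\circ i\simeq\Pi^{(3)}$. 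Proposition~\ref{210} asserts $\Pi\simeq\Pi^{(1)}+\Pi^{(2)}-\Pi^{(3)}$, which in the present notation is exactly $h_1\simeq(j\circ\pi_f)+h_2-(\pi_g\circ i)$, i.e.\ the hypothesis of Lemma~\ref{NewCA}. Applying that lemma delivers a chain homotopy between $(f_1,g_1,h_1)$ and $(f_2,g_2,h_2)$, and the desired independence follows by passing to $H_m(\,\cdot\,,\Q)$ and using Remark~\ref{206}(iv).

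I expect the most delicate step to be the identification $\pi_g\circ i\simeq\Pi^{(3)}$: the two expressions differ in the way the secondary Todd form is inserted into the iterated $C_{k+1}$-bracket, and bridging them requires a careful use of the graded commutativity and homotopy associativity of the Deligne product, exactly as in the $\Pi^{(3)}\simeq\Pi^{(3')}$ step of the proof of Proposition~\ref{210}. The sign bookkeeping for $H_f$ on faces of $\overline{E}$ is another place where a small error could propagate throughout, so that verification would need to be written out in full.
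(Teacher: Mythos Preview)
Your proposal is correct and follows the same approach as the paper's proof, which consists of the single sentence ``This follows from Remark~\ref{206}(iv), Proposition~\ref{210} and Lemma~\ref{NewCA}.'' You have simply unpacked what these three citations mean when assembled: Remark~\ref{206}(iv) reduces to simple complexes, Lemma~\ref{NewCA} gives the criterion for the two induced maps on simple complexes to be chain homotopic, and Proposition~\ref{210} supplies exactly the homotopy of homotopies needed to verify that criterion once the identifications $j\circ\pi_f=\Pi^{(1)}$ and $\pi_g\circ i\simeq\Pi^{(3)}$ are made.

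One small comment on your final caveat: the identification $\pi_g\circ i=\Pi^{(3)}$ is not as delicate as you fear. Since the secondary Todd form ${\rm Td}_g(Tf,h^{Tf},h'^{Tf})$ is pulled back from $X_{\mu_n}$ and does not depend on the $(\mathbb{P}^1)^k$ coordinates, the projection formula lets you commute the fiber integral over $X_{\mu_n}/Y_{\mu_n}$ with the $(\mathbb{P}^1)^k$ integration and the $\bullet$-product with $C_k$, giving $\pi_g\circ{\rm ch}_g=\Pi^{(3)}$ on the nose (not merely up to homotopy). The genuinely nontrivial homotopy-of-homotopies content is already packaged inside Proposition~\ref{210}.
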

\begin{proof}
This follows from Remark~\ref{206} (iv), Theorem~\ref{210} and Lemma~\ref{NewCA}.
\end{proof}

\section{Transitivity of the direct image maps}
Let $f: X\to Y$, $h: Y\to Z$ and $l: X\to Z$ be three equivariant morphisms between regular $\mu_n$-projective schemes, which are all smooth over the generic fibres. Assume that $l=h\circ f$, in this section, we shall compare the direct image map $l_*$ with the composition $h_*\circ f_*$. To this aim, we shall firstly discuss the functoriality of the equivariant analytic torsion forms with respect to a composition of submersions.

\subsection{Analytic torsion forms and families of submersions}
Let $W, V$ and $S$ be three smooth $\mu_n$-equivariant algebraic varieties over $\C$ with $S=S_{\mu_n}$. Suppose that $f: W\to V$ and $h: V\to S$ are two proper smooth morphisms, then passing to their analytifications the maps $f: W(\C)\to V(\C)$ and $h: V(\C)\to S(\C)$ are holomorphic submersions with compact fibres. Set $l=h\circ f$, it is also a proper smooth morphism and $l: W(\C)\to S(\C)$ is a holomorphic submersion with compact fibre as well.

Let $\omega^W$ and $\omega^V$ be two $\mu_n$-invariant K\"{a}hler forms on $W$ and on $V$. As before, $\omega^W$ and $\omega^V$ imply K\"{a}hler fibration structures on the morphisms $f, h$ and $l$ and they induce $\mu_n$-invariant hermitian metrics on relative tangent bundles $Tf, Th$ and $Tl$. Consider the following short exact sequence of hermitian vector bundles
$$\overline{T}(f,h,h\circ f):\quad 0\to \overline{Tf}\to \overline{Tl}\to f^*\overline{Th}\to 0,$$
denote by ${\rm Td}_g\big(\overline{T}(f,h,h\circ f)\big)=\Phi^{-1}\Big(\frac{\widetilde{{\rm Td}}_g\big(\overline{T}(f,h,h\circ f)\big)}{2}\Big)$ (see Section 5.2 in the Appendix) the equivariant secondary Todd form such that
$$d_\mathcal{D}{\rm Td}_g\big(\overline{T}(f,h,h\circ f)\big)={\rm Td}_g(\overline{Tl})-f_{\mu_n}^*{\rm Td}_g(\overline{Th}){\rm Td}_g(\overline{Tf}).$$

Now let $\overline{E}$ be a hermitian vector bundle on $W$, we shall assume that $E$ is $f$-acyclic and $l$-acyclic. Then the Leray spectral sequence
$E_2^{i,j}=R^ih_*(R^jf_*E)$ degenerates at $E_2$ so that $f_*E=R^0f_*(E)$ is $h$-acyclic and $l_*E\cong h_*f_*E$. Clearly, $l_*E$ and $h_*f_*E$ carry in general different $L^2$-metrics (See Section 5.2 in the Appendix). Consider the following short exact sequence of hermitian vector bundles
$$\overline{E}(f,h,h\circ f):\quad 0\to h_*f_*\overline{E} \to l_*\overline{E} \to 0\to 0,$$
it can be regarded as an emi-$1$-cube of hermitian bundles on $S$. Then the equivariant higher Bott-Chern form ${\rm ch}_g\big(\overline{E}(f,h,h\circ f)\big)$ satisfies the differential equation
$$d_\mathcal{D}{\rm ch}_g\big(\overline{E}(f,h,h\circ f)\big)={\rm ch}_g(l_*\overline{E})-{\rm ch}_g(h_*f_*\overline{E}).$$

The main result in this subsection is the following.

\begin{thm}\label{301}
Let notations and assumptions be as above. Then the following identity holds in $\bigoplus_{p\geq 0}\big(D^{2p-1}(S,p)/{\Im d_\mathcal{D}}\big)$:
\begin{multline*}
T_g(l,\omega^W,h^E)-T_g(h,\omega^V,h^{f_*E})-\frac{1}{(2\pi i)^{r_h}}\int_{V_{\mu_n}/S}{\rm Td}_g(\overline{Th})T_g(f,\omega^W,h^E)\\
={\rm ch}_g\big(\overline{E}(f,h,h\circ f)\big)-\frac{1}{(2\pi i)^{r_l}}\int_{W_{\mu_n}/S}{\rm Td}_g\big(\overline{T}(f,h,h\circ f)\big){\rm ch}_g(\overline{E})
\end{multline*}
where $r_h$ and $r_l$ are the relative dimensions of $V_{\mu_n}/S$ and $W_{\mu_n}/S$ respectively.
\end{thm}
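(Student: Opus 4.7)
My plan is to deduce this identity from the equivariant composition formula for holomorphic analytic torsion forms due to X.~Ma, which is the $\mu_n$-equivariant counterpart of the Berthomieu--Bismut theorem in the non-equivariant case. Ma's result already has precisely the shape of the identity claimed above: the discrepancy between $T_g(g,\omega^W,h^E)$ and $T_g(h,\omega^V,h^{f_*E})+\frac{1}{(2\pi i)^{r_h}}\int_{V_{\mu_n}/S}{\rm Td}_g(\overline{Th})\,T_g(f,\omega^W,h^E)$ is measured by two Bott--Chern secondary classes, one attached to the short exact sequence of direct images $\overline{E}(f,h,h\circ f)$, and one attached to the short exact sequence $\overline{T}(f,h,h\circ f)$ of relative tangent bundles. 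The only subtlety is that Ma's formula is stated in his normalization $T'_g = 2\Phi\, T_g$ recalled after Theorem~\ref{207}, so the proof reduces to transporting that identity through $(2\Phi)^{-1}$.

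First, I would recall Ma's composition formula precisely, paying attention to the signs, the factors $(2\pi i)^{-r_h}$ and $(2\pi i)^{-r_g}$ produced by integration along the fibres of $V_{\mu_n}/S$ and $W_{\mu_n}/S$, and the exact form of the two secondary Bott--Chern classes appearing there (which Ma constructs via Quillen's superconnection formalism). Second, I would verify that these secondary classes coincide, modulo $\mathrm{Im}\,d$, with the higher equivariant Bott--Chern forms ${\rm ch}_g(\overline{E}(f,h,h\circ f))$ and ${\rm Td}_g(\overline{T}(f,h,h\circ f))$ built from the transgression bundle construction in Section~2.1; the required compatibility between the two definitions of secondary characteristic forms was already used in the proof of Lemma~\ref{delta} and can be invoked verbatim here. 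Third, I would apply $(2\Phi)^{-1}$ to both sides of Ma's identity. Since $\Phi$ commutes with $\partial$, $\overline{\partial}$ and with integration along the compact fibres over $S=S_{\mu_n}$, each term transforms into its counterpart in the statement, and the identity we want drops out modulo $\mathrm{Im}\,d$.

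The hard part will be the final bookkeeping: matching the normalization constants in the definition of $\Phi$, the factors $(2\pi i)^{-r_h}$ and $(2\pi i)^{-r_g}$ coming from fibre integration, and the combinatorial factors built into Definition~\ref{202} of the higher equivariant Bott--Chern and Todd forms. No new analytic input beyond Ma's theorem is required; the difficulty is purely in showing that every constant, sign, and power of $2\pi i$ lines up after the rescaling, so that the identity holds on the nose in $\bigoplus_{p\geq 0} D^{2p-1}(S,p)/\mathrm{Im}\,d$.
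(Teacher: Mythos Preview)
Your overall strategy coincides with the paper's: both reduce the theorem to Ma's work on analytic torsion forms for compositions of submersions, and neither supplies any new analytic input beyond that. In this sense your proposal is correct in spirit.

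There is, however, a genuine gap in your first step. You invoke ``the equivariant composition formula for holomorphic analytic torsion forms due to X.~Ma'' as if it were an existing theorem you can quote. In the references the paper uses, Ma proved the \emph{non-equivariant} composition formula for torsion forms in \cite{Ma2}, while \cite{Ma1} develops equivariant techniques but for Quillen metrics, not for the full torsion-form composition statement. The paper's proof is precisely the observation that the equivariant identity is \emph{not} available off the shelf: one must rerun the argument of \cite[Sections~4--9]{Ma2} with the operator $g$ inserted, checking that the functional-analytic estimates (Theorems~4.5--4.7 there) extend formally, and that the local-index-theoretic steps (Theorems~4.8--4.11) localize on $f^{-1}(V_{\mu_n})$ and are insensitive to the $g$-action under the Clifford rescaling, using the machinery of \cite[Sections~7--9]{Ma1}. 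That is the actual content of the proof, and your proposal skips it by assuming the conclusion.

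By contrast, the bookkeeping you emphasize---matching the superconnection and transgression-bundle definitions of the secondary classes modulo ${\rm Im}\,d$, and transporting through the rescaling $T'_g=2\Phi\,T_g$---is real but routine, and the paper does not even bother to spell it out. So the balance of difficulty is the reverse of what you describe: the analytic extension to the equivariant setting is the substantive step, and the normalization is the easy part.
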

\begin{proof}
This is a translation of Theorem~\ref{A20} in the Appendix.
\end{proof}

\begin{lem}\label{deltac}
With the same notations as in Remark~\ref{explain_tt} and Theorem~\ref{A2} in the Appendix, we set 
$$\Delta(f,h,\omega^W,\omega^V,\overline{E}):=-\Phi^{-1}\big(\frac{\Delta^0(f,h,\omega^W,\omega^V,\overline{E})+\Delta_0(f,h,\omega^W,\omega^V,\overline{E})}{2}\big)+\Delta\big(\overline{E}(f,h,h\circ f)\big).$$ 
Then $d_\mathcal{D}\Delta(f,h,\omega^W,\omega^V,\overline{E})$ measures the difference
\begin{multline*}
T_g(l,\omega^W,h^E)-T_g(h,\omega^V,h^{f_*E})-\frac{1}{(2\pi i)^{r_h}}\int_{V_{\mu_n}/S}{\rm Td}_g(\overline{Th})T_g(f,\omega^W,h^E)\\
-{\rm ch}_g\big(\overline{E}(f,h,h\circ f)\big)+\frac{1}{(2\pi i)^{r_l}}\int_{W_{\mu_n}/S}{\rm Td}_g\big(\overline{T}(f,h,h\circ f)\big){\rm ch}_g(\overline{E})
\end{multline*}
in Theorem~\ref{301}. Assume that we are in the same situation described before Lemma~\ref{delta}. Call $l: S\times Z_1\to S\times Z$ the natural inclusion, then similar to Lemma~\ref{delta}, we have that
$$l^*\Delta(f_Z,h_Z,\omega^W,\omega^V,\overline{E})=\Delta(f_{Z_1},h_{Z_1},\omega^W_1,\omega^V_1,j^*\overline{E}).$$
\end{lem}
\begin{proof}
This is a consequence of Theorem~\ref{A2} in the Appendix.
\end{proof}

\subsection{The transitivity property}
In this subsection, we present certain transitivity property of direct image maps between equivariant higher arithmetic K-groups. To do this, we firstly write down the following diagram of homological complexes
\begin{align}\label{atc-comp}
\xymatrix{ \widetilde{\Z}C_*^{(f, l)-\text{ac}}(X,\mu_n) \ar[r]^-{{\rm ch}_g} \ar[d]^-{f_*} & \bigoplus_{p\geq0}D^{2p-*}(X_{\mu_n},p)_{R_n} \ar[d]^-{{f_{\mu_n}}_*\circ {\rm Td}_g(\overline{Tf})\bullet(\cdot)} \\
\widetilde{\Z}C_*^{f-\text{ac}}(Y,\mu_n) \ar[r]^-{{\rm ch}_g} \ar[d]^-{h_*} & \bigoplus_{p\geq0}D^{2p-*}(Y_{\mu_n},p)_{R_n} \ar[d]^-{{h_{\mu_n}}_*\circ {\rm Td}_g(\overline{Th})\bullet(\cdot)} \\
\widetilde{\Z}C_*(Z,\mu_n) \ar[r]^-{{\rm ch}_g} & \bigoplus_{p\geq0}D^{2p-*}(Z_{\mu_n},p)_{R_n}}
\end{align}
where $l$ is $h\circ f$ and $\widetilde{\Z}C_*^{(f, l)-\text{ac}}(X,\mu_n)$ is the subcomplex of $\widetilde{\Z}C_*(X,\mu_n)$ made of those bundles which are $f$-acyclic and $l$-acyclic simultaneously.

Let $\overline{E}$ be a hermitian $k$-cube in $\widehat{\mathcal{P}}(X,\mu_n)$ which is $f$-acyclic and $l$-acyclic, the short exact sequence
\begin{align*}
\xymatrix{0 \ar[r] & h_*f_*\overline{E} \ar[r]^-{\rm Id} & l_*\overline{E} \ar[r] & 0 \ar[r] & 0}
\end{align*}
can be regarded as a hermitian $(k+1)$-cube $H_{h\circ f}(\overline{E})$ on $Z$ such that the transgression bundle ${\rm tr}_{k+1}\Big(\lambda\big(H_{h\circ f}(\overline{E})\big)\Big)$ satisfies the relations
$${\rm tr}_{k+1}\Big(\lambda\big(H_{h\circ f}(\overline{E})\big)\Big)\mid_{Z\times \{0\}\times (\mathbb{P}^1)^k}={\rm tr}_{k}\big(\lambda(l_*\overline{E})\big),$$
$${\rm tr}_{k+1}\Big(\lambda\big(H_{h\circ f}(\overline{E})\big)\Big)\mid_{Z\times \{\infty\}\times (\mathbb{P}^1)^k}={\rm tr}_{k}\big(\lambda(h_*f_*\overline{E})\big)$$
and
$${\rm tr}_{k+1}\Big(\lambda\big(H_{h\circ f}(\overline{E})\big)\Big)\mid_{Z\times (\mathbb{P}^1)^{i}\times \{0\}\times (\mathbb{P}^1)^{k-i}}={\rm tr}_k\Big(\lambda\big(H_{h\circ f}(\partial_i^0\overline{E})\big)\Big),$$
$${\rm tr}_{k+1}\Big(\lambda\big(H_{h\circ f}(\overline{E})\big)\Big)\mid_{Z\times (\mathbb{P}^1)^{i}\times \{\infty\}\times (\mathbb{P}^1)^{k-i}}={\rm tr}_k\Big(\lambda\big(H_{h\circ f}(\partial_i^{-1}\overline{E})\big)\Big)\oplus {\rm tr}_k\Big(\lambda\big(H_{h\circ f}(\partial_i^{1}\overline{E})\big)\Big)$$
for $i=1,\cdots,k$.

\begin{prop}\label{302}
The following map
$$\Pi^{(1)}_k(\overline{E})=\frac{(-1)^{k+1}}{2(k+1)!(2\pi i)^{k+1}}\int_{(\mathbb{P}^1)^{k+1}}{\rm ch}_g^0\Big({\rm tr}_{k+1}\circ\lambda\big(H_{h\circ f}(\overline{E})\big)\Big)\wedge C_{k+1}(\log\mid z_1\mid^2,\cdots,\log\mid z_{k+1}\mid^2)$$
which vanishes on degenerate cubes provides a chain homotopy of homological complexes between the maps ${\rm ch}_g\circ l_*$ and ${\rm ch}_g\circ (h_*\circ f_*)$.
\end{prop}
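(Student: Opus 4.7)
The plan is to adapt the computation of Proposition~\ref{208} directly, since $\Pi^{(1)}_k$ has exactly the same shape as $\Pi'_k$ there, only with the emi-$1$-cube $H_{h\circ f}(\overline{E})$ playing the role of $H(\overline{E})$. First I reduce to the case where $\overline{E}$ is both $f$-acyclic and $g$-acyclic, using the usual Waldhausen cofinality argument that the K-theory space of such bundles is homotopy equivalent to the full one; this makes $h_*f_*\overline{E}$ and $g_*\overline{E}$ honest locally free and the exact sequence $H_{h\circ f}(\overline{E})$ well-defined. I then check that $\Pi^{(1)}_k$ vanishes on degenerate cubes, which follows exactly as in \cite[Lemma 3.3]{Roe} from the multiplicativity of the transgression bundle construction on degenerate faces.

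The main step is to compute $d\circ\Pi^{(1)}_k(\overline{E})+\Pi^{(1)}_{k-1}\circ d(\overline{E})$ and identify it with ${\rm ch}_g(g_*\overline{E})-{\rm ch}_g(h_*f_*\overline{E})$. Because the bundle ${\rm tr}_{k+1}\circ\lambda(H_{h\circ f}(\overline{E}))$ is holomorphic, its equivariant Chern--Weil form ${\rm ch}_g^0$ is $d$-closed, so the differential $d$ passes inside the integral and acts only on the product $C_{k+1}(\log|z_1|^2,\dots,\log|z_{k+1}|^2)$. Applying formula~(\ref{dc}) together with $d\log|z_j|^2=-4\pi i(\delta_{z_j=\infty}-\delta_{z_j=0})$ turns each term into an integral against a delta current, and separating the $j=1$ contribution from the $j\geq 2$ contributions gives the boundary restrictions listed right before the proposition.

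The $j=1$ contribution, by the restriction formulas
$${\rm tr}_{k+1}(\lambda(H_{h\circ f}(\overline{E})))|_{Z\times\{0\}\times(\mathbb{P}^1)^k}={\rm tr}_k(\lambda(g_*\overline{E})),\quad {\rm tr}_{k+1}(\lambda(H_{h\circ f}(\overline{E})))|_{Z\times\{\infty\}\times(\mathbb{P}^1)^k}={\rm tr}_k(\lambda(h_*f_*\overline{E})),$$
produces exactly ${\rm ch}_g(g_*\overline{E})-{\rm ch}_g(h_*f_*\overline{E})$ after unfolding the definition of the equivariant higher Bott-Chern form from Definition~\ref{202} (combined with its Burgos-Wang reformulation via $\varphi$). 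The $j\geq 2$ contributions, using the restrictions
$${\rm tr}_{k+1}(\lambda(H_{h\circ f}(\overline{E})))|_{Z\times(\mathbb{P}^1)^i\times\{0\}\times(\mathbb{P}^1)^{k-i}}={\rm tr}_k(\lambda(H_{h\circ f}(\partial_i^0\overline{E})))$$
and the additivity at $\infty$, reorganize into a single sum matching $-\Pi^{(1)}_{k-1}\circ d(\overline{E})$, because ${\rm ch}_g^0$ is additive on short exact sequences of holomorphic bundles and the combinatorial signs agree with the differential $d=\sum_{i,j}(-1)^{i+j-1}\partial_i^j$ on cubes.

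The only real obstacle is bookkeeping: tracking the signs produced by the shuffle factors $(-\tfrac12)^k$ in $C_{k+1}$, the $(-1)^\sigma$ coming from the symmetric group, and the change of indexing when the $j=1$ factor is removed versus an interior factor. This is entirely parallel to the calculation displayed in the proof of Proposition~\ref{208}, so I would model the argument on that display verbatim, replacing $H(\overline{E})$ with $H_{h\circ f}(\overline{E})$ throughout and using the fact that ${\rm tr}_k\circ\lambda$ applied to either $g_*\overline{E}$ or $h_*f_*\overline{E}$ restricts compatibly on faces. Once the signs are verified, the identity
$$d\circ\Pi^{(1)}_k(\overline{E})+\Pi^{(1)}_{k-1}\circ d(\overline{E})={\rm ch}_g(g_*\overline{E})-{\rm ch}_g(h_*f_*\overline{E})$$
drops out, which is the required chain homotopy relation.
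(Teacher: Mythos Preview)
Your proposal is correct and follows essentially the same approach as the paper, which simply remarks that the proof is straightforward from the restriction relations listed for ${\rm tr}_{k+1}(\lambda(H_{h\circ f}(\overline{E})))$ together with the expression for $dC_{k+1}$, modeled on the computation in Proposition~\ref{208}. You have spelled out exactly those details; the only cosmetic point is that at $z_i=\infty$ the restriction is a genuine direct sum, so additivity of ${\rm ch}_g^0$ is on direct sums rather than general short exact sequences, but this does not affect the argument.
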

\begin{proof}
Using the above relations that the transgression bundle ${\rm tr}_{k+1}\Big(\lambda\big(H_{h\circ f}(\overline{E})\big)\Big)$ satisfies and the expression of $d_\mathcal{D}C_{k+1}$, the proof is straightforward. This can be also seen from the fact that $H_{h\circ f}(\overline{E})$ provides a chain homotopy between $l_*$ and $h_*\circ f_*$.
\end{proof}

\begin{prop}\label{303}
The composition ${h_{\mu_n}}_*\circ {\rm Td}_g(\overline{Th})\bullet\big({f_{\mu_n}}_*\circ {\rm Td}_g(\overline{Tf})\bullet(\cdot)\big)$ is equal to ${l_{\mu_n}}_*\circ f_{\mu_n}^*{\rm Td}_g(\overline{Th}){\rm Td}_g(\overline{Tf})\bullet(\cdot)$. The following maps
\begin{multline*}
\Pi^{(3)}_k(\overline{E}):=\frac{(-1)^{k}}{2k!(2\pi i)^k}\int_{(\mathbb{P}^1)^k}\bigg(\Big(\frac{1}{(2\pi i)^{r_l}}\int_{{X_{\mu_n}\times (\mathbb{P}^1)^k}/{Z_{\mu_n}\times(\mathbb{P}^1)^k}}{\rm Td}_g\big(\overline{T}(f,h,h\circ f)\big){\rm ch}_g^0\big({\rm tr}_k\circ\lambda(\overline{E})\big)\Big)\\
{\bullet C_{k}(\log\mid z_1\mid^2,\cdots,\log\mid z_{k}\mid^2)\bigg)}
\end{multline*}
and
\begin{multline*}
\Pi^{(3')}_k(\overline{E}):=\frac{(-1)^{k+1}}{(k+1)!(2\pi i)^k}\int_{(\mathbb{P}^1)^k}C_{k+1}\Big(\frac{1}{(2\pi i)^{r_l}}\int_{{X_{\mu_n}\times (\mathbb{P}^1)^k}/{Z_{\mu_n}\times(\mathbb{P}^1)^k}}{\rm Td}_g\big(\overline{T}(f,h,h\circ f)\big){\rm ch}_g^0\big({\rm tr}_k\circ\lambda(\overline{E})\big)\\
,\log\mid z_1\mid^2,\cdots,\log\mid z_{k}\mid^2\Big)
\end{multline*}
give two chain homotopies of homological complexes between the maps ${l_{\mu_n}}_*\circ {\rm Td}_g(\overline{Tg})\bullet\big({\rm ch}_g(\cdot)\big)$ and ${l_{\mu_n}}_*\circ f_{\mu_n}^*{\rm Td}_g(\overline{Th}){\rm Td}_g(\overline{Tf})\bullet\big({\rm ch}_g(\cdot)\big)$. Moreover, $\Pi^{(3)}_k(\overline{E})$ and $\Pi^{(3')}_k(\overline{E})$ are homotopic to each other.
\end{prop}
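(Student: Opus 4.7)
The first assertion will be handled first, as it is essentially formal. Transitivity of pushforward gives $(h\circ f)_{\mu_n,*} = h_{\mu_n,*}\circ f_{\mu_n,*}$ on smooth forms, and the projection formula for the submersion $f_{\mu_n}$ yields
\begin{equation*}
h_{\mu_n,*}\bigl({\rm Td}_g(\overline{Th})\bullet f_{\mu_n,*}({\rm Td}_g(\overline{Tf})\bullet\alpha)\bigr)
= h_{\mu_n,*}\circ f_{\mu_n,*}\bigl(f_{\mu_n}^{*}{\rm Td}_g(\overline{Th}){\rm Td}_g(\overline{Tf})\bullet\alpha\bigr),
\end{equation*}
which is the desired equality. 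Here one uses that ${\rm Td}_g(\overline{Th})$ is closed at the top cohomological degree, so that $\bullet$ can be replaced by the ordinary product under the integral, exactly as in the definition of the $D^*$-product.

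For the statement about $\Pi^{(3)}_k$, I would compute $d\circ\Pi^{(3)}_k(\overline{E}) + \Pi^{(3)}_{k-1}\circ d(\overline{E})$ by the same pattern used in Proposition~\ref{208} and in the proof of Theorem~\ref{209}. Differentiating $C_k$ against the currents $\log|z_j|^2$ produces the boundary contributions $\delta_{z_j=0}-\delta_{z_j=\infty}$, which reassemble into $\Pi^{(3)}_{k-1}\circ d(\overline{E})$ via the compatibility of transgression bundles with faces. The remaining term is obtained by using the differential equation
\begin{equation*}
d\,{\rm Td}_g(\overline{T}(f,h,h\circ f))={\rm Td}_g(\overline{Tg})-f_{\mu_n}^{*}{\rm Td}_g(\overline{Th}){\rm Td}_g(\overline{Tf}),
\end{equation*}
combined with the differential equation satisfied by ${\rm ch}_g^{0}\bigl({\rm tr}_k\circ\lambda(\overline{E})\bigr)$, and then applying the already-established identity from the first part of the proposition. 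The resulting difference is exactly ${g_{\mu_n}}_*{\rm Td}_g(\overline{Tg})\bullet {\rm ch}_g(\overline{E}) - {g_{\mu_n}}_*f_{\mu_n}^{*}{\rm Td}_g(\overline{Th}){\rm Td}_g(\overline{Tf})\bullet {\rm ch}_g(\overline{E})$, as required.

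The analogous verification for $\Pi^{(3')}_k$ follows the same pattern, but the boundary terms of $C_{k+1}$ in the integrand appear through the formula (\ref{dc}); one obtains the closed form ${\rm Td}_g(\overline{Tg})-f_{\mu_n}^{*}{\rm Td}_g(\overline{Th}){\rm Td}_g(\overline{Tf})$ in the first slot of $C_{k+1}$, and after integrating against the remaining log-currents one recovers the same two endpoints. The main place where care is needed is the last assertion, that $\Pi^{(3)}_k$ and $\Pi^{(3')}_k$ are naturally homotopic. I would proceed exactly as in the proof of Proposition~\ref{210}: both maps differ only in how the closed form ${\rm Td}_g(\overline{T}(f,h,h\circ f))\,{\rm ch}_g^{0}({\rm tr}_k\circ\lambda(\overline{E}))$ is paired with the log-currents, and since $\bullet$ on the Deligne complex is graded commutative and associative up to a natural chain homotopy, the identity $d\Pi^{(3)}_k - d\Pi^{(3')}_k = \Pi^{(3)}_k(-d\,\cdot) - \Pi^{(3')}_k(-d\,\cdot)$ combined with \cite[Remark 2.4, Lemma 2.5]{T3} provides the required second-order homotopy.

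The expected main obstacle is the bookkeeping in the $\Pi^{(3)}_k$ computation: one must carefully track the projection formula under $\int_{X_{\mu_n}\times(\mathbb{P}^1)^k/Z_{\mu_n}\times(\mathbb{P}^1)^k}$ and keep the $\bullet$-product versus ordinary wedge distinction straight when integrating against the $\log|z_j|^2$. Once these are handled as in Proposition~\ref{208}, everything reduces to routine manipulations on Deligne complexes.
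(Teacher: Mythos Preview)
Your proposal is correct and follows essentially the same approach as the paper's own proof, which simply invokes the projection formula for the first assertion, labels the second a ``straightforward computation,'' and cites \cite[Remark 2.4, Lemma 2.5]{T3} for the homotopy between $\Pi^{(3)}_k$ and $\Pi^{(3')}_k$. You have merely fleshed out the details of the computation and made explicit the role of the identity $d\Pi^{(3)}_k - d\Pi^{(3')}_k = \Pi^{(3)}_k(-d\,\cdot) - \Pi^{(3')}_k(-d\,\cdot)$, exactly as the paper does in the parallel passage inside the proof of Proposition~\ref{210}.
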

\begin{proof}
The first statement follows from the projection formula, the second statement follows from a straightforward computation and the third follows from \cite[Remark 2.4, Lemma. 2.5]{T3}.
\end{proof}

Now we write $\Pi^f_k=\Pi'^f_k+\Pi''^f_k$ for the chain homotopy of the upper square in (\ref{atc-comp}) and $\Pi^h_k=\Pi'^h_k+\Pi''^h_k$ for the chain homotopy of the lower square in (\ref{atc-comp}). Then $\Pi^{(1)}_k+{h_{\mu_n}}_*\circ \big({\rm Td}_g(\overline{Th})\bullet\Pi^f_k\big)+\Pi^h_k\circ f_*-\Pi^{(3)}_k$ defines a chain homotopy between maps ${\rm ch}_g\circ l_*$ and ${l_{\mu_n}}_*\circ {\rm Td}_g(\overline{Tg})\bullet\big({\rm ch}_g(\cdot)\big)$. Suppose that the $\mu_n$-action on $Z$ is trivial, it's the main result of this subsection that the chain homotopy $\Pi^{(1)}_k+{h_{\mu_n}}_*\circ \big({\rm Td}_g(\overline{Th})\bullet\Pi^f_k\big)+\Pi^h_k\circ f_*-\Pi^{(3)}_k$ is homotopic to the chain homotopy $\Pi^l_k=\Pi'^l_k+\Pi''^l_k$ for the whole square in (\ref{atc-comp}). According to Proposition~\ref{303}, it is equivalent to show that $\Pi^{(1)}_k+{h_{\mu_n}}_*\circ \big({\rm Td}_g(\overline{Th})\bullet\Pi^f_k\big)+\Pi^h_k\circ f_*-\Pi^{(3')}_k$ is homotopic to $\Pi^l_k$.

To see this, we firstly denote by $H_{h\circ f}^{l}(\overline{E})$ the following emi-$2$-cube of hermitian bundles on $Z\times (\mathbb{P}^1)^k$
$$\xymatrix{h_*f_*\big({\rm tr}_k\circ\lambda(\overline{E})\big) \ar[r]^-{\rm Id} \ar[d]^-{\rm Id} & {\rm tr}_k\circ\lambda\big(h_*f_*(\overline{E})\big) \ar[r] \ar[d]^-{\rm Id} & 0 \ar[d]\\
l_*\big({\rm tr}_k\circ\lambda(\overline{E})\big) \ar[r]^-{\rm Id} \ar[d] & {\rm tr}_k\circ\lambda\big(l_*(\overline{E})\big) \ar[r] \ar[d] & 0 \ar[d]\\
0 \ar[r] & 0 \ar[r] & 0.}$$
Then, like before, we construct a hermitian bundle ${\rm tr}_2\big(H_{h\circ f}^{l}(\overline{E})\big)$ on $(\mathbb{P}^1)^{k+2}$ as the second transgression bundle of $H_{h\circ f}^{l}(\overline{E})$ such that it satisfies the following relations:
$${\rm tr}_2\big(H_{h\circ f}^{l}(\overline{E})\big)\mid_{Z\times \{0\}\times (\mathbb{P}^1)^{k+1}}={\rm tr}_{k+1}\Big(\lambda\big(H_{h\circ f}(\overline{E})\big)\Big),$$
$${\rm tr}_2\big(H_{h\circ f}^{l}(\overline{E})\big)\mid_{Z\times \{\infty\}\times (\mathbb{P}^1)^{k+1}}={\rm tr}_1\Big(H_{h\circ f}\big({\rm tr}_k\circ\lambda(\overline{E})\big)\Big),$$
$${\rm tr}_2\big(H_{h\circ f}^{l}(\overline{E})\big)\mid_{Z\times \mathbb{P}^1\times \{0\}\times (\mathbb{P}^1)^k}={\rm tr}_1\big(H(\overline{E},l_*)\big),$$
$${\rm tr}_2\big(H_{h\circ f}^{l}(\overline{E})\big)\mid_{Z\times \mathbb{P}^1\times \{\infty\}\times (\mathbb{P}^1)^k}={\rm tr}_1\big(H(\overline{E},h_*f_*)\big)$$
and
$${\rm tr}_2\big(H_{h\circ f}^{l}(\overline{E})\big)\mid_{Z\times (\mathbb{P}^1)^{i+1}\times \{0\}\times (\mathbb{P}^1)^{k-i}}={\rm tr}_2\big(H_{h\circ f}^{l}(\partial_i^0\overline{E})\big),$$
$${\rm tr}_2\big(H_{h\circ f}^{l}(\overline{E})\big)\mid_{Z\times (\mathbb{P}^1)^{i+1}\times \{\infty\}\times (\mathbb{P}^1)^{k-i}}={\rm tr}_2\big(H_{h\circ f}^{l}(\partial_i^{-1}\overline{E})\big)\oplus {\rm tr}_2\big(H_{h\circ f}^{l}(\partial_i^{1}\overline{E})\big)$$
for $i=1,\cdots,k$. We set
$$\mathbf{H}_{1,k}(\overline{E}):=\frac{(-1)^{k+2}}{2(k+2)!(2\pi i)^{k+2}}\int_{(\mathbb{P}^1)^{k+2}}{\rm ch}_g^0\Big({\rm tr}_2\big(H_{h\circ f}^{l}(\overline{E})\big)\Big)\wedge C_{k+2}(\log\mid z_1\mid^2,\cdots,\log\mid z_{k+2}\mid^2).$$
Then $\mathbf{H}_{1,k}$ vanishes on degenerate $k$-cubes, and we obtain a map $$\mathbf{H}_{1,k}: \widetilde{\Z}C_k^{(f, l)-\text{ac}}(X,\mu_n)\to \bigoplus_{p\geq0}D^{2p-k-2}(Z,p)_{R_n}$$ by linear extension. This map satisfies the following differential equation
\begin{multline*}
d_\mathcal{D}\circ\mathbf{H}_{1,k}(\overline{E})=\frac{(-1)^{k+2}}{2(k+2)!(2\pi i)^{k+2}}\int_{(\mathbb{P}^1)^{k+2}}{\rm ch}_g^0\Big({\rm tr}_2\big(H_{h\circ f}^{l}(\overline{E})\big)\Big)\wedge d_\mathcal{D}C_{k+2}(\log\mid z_1\mid^2,\cdots,\log\mid z_{k+2}\mid^2)\\
=\frac{(-1)^{k+2}}{2(k+2)!(2\pi i)^{k+2}}\int_{(\mathbb{P}^1)^{k+2}}{\rm ch}_g^0\Big({\rm tr}_2\big(H_{h\circ f}^{l}(\overline{E})\big)\Big)\wedge\big((-\frac{1}{2})(k+2)\sum_{j=1}^{k+2}(-1)^{j-1}(-4\pi i)(\delta_{z_j=\infty}-\delta_{z_j=0})\hfill\\
{\wedge C_{k+1}(\log\mid z_1\mid^2,\cdots,\widehat{\log\mid z_j\mid^2},\cdots,\log\mid z_{k+2}\mid^2)\big)}\\
=\mathbf{H}_{1,k-1}\circ d(\overline{E})-\frac{(-1)^{k+1}}{2(k+1)!(2\pi i)^{k+1}}\int_{(\mathbb{P}^1)^{k+1}}\Bigg[\bigg({\rm ch}_g^0\Big({\rm tr}_1\big(H(\overline{E},l_*)\big)\Big)-{\rm ch}_g^0\Big({\rm tr}_1\big(H(\overline{E},h_*f_*)\big)\Big)\bigg)\hfill\\
{\wedge C_{k+1}(\log\mid z_1\mid^2,\cdots,\log\mid z_{k+1}\mid^2)\Bigg]}\\
+\frac{(-1)^{k+1}}{2(k+1)!(2\pi i)^{k+1}}\int_{(\mathbb{P}^1)^{k+1}}\Bigg[\Bigg({\rm ch}_g^0\bigg({\rm tr}_{k+1}\Big(\lambda\big(H_{h\circ f}(\overline{E})\big)\Big)\bigg)-{\rm ch}_g^0\bigg({\rm tr}_1\Big(H_{h\circ f}\big({\rm tr}_k\circ\lambda(\overline{E})\big)\Big)\bigg)\Bigg)\\
{\wedge C_{k+1}(\log\mid z_1\mid^2,\cdots,\log\mid z_{k+1}\mid^2)\Bigg]}\\
=\mathbf{H}_{1,k-1}\circ d(\overline{E})-\Pi'^l_k(\overline{E})+\Pi^{(1)}_k(\overline{E})\hfill\\
+\frac{(-1)^{k+1}}{2(k+1)!(2\pi i)^{k+1}}\int_{(\mathbb{P}^1)^{k+1}}{\rm ch}_g^0\Big({\rm tr}_1\big(H(\overline{E},h_*f_*)\big)\Big){\wedge C_{k+1}(\log\mid z_1\mid^2,\cdots,\log\mid z_{k+1}\mid^2)}\\
-\frac{(-1)^{k+1}}{2(k+1)!(2\pi i)^{k+1}}\int_{(\mathbb{P}^1)^{k+1}}{\rm ch}_g^0\bigg({\rm tr}_1\Big(H_{h\circ f}\big({\rm tr}_k\circ\lambda(\overline{E})\big)\Big)\bigg)\wedge C_{k+1}(\log\mid z_1\mid^2,\cdots,\log\mid z_{k+1}\mid^2).
\end{multline*}

Secondly, we denote by $H'^l_{h\circ f}(\overline{E})$ the following emi-$2$-cube of hermitian bundles on $Z\times(\mathbb{P}^1)^k$
$$\xymatrix{h_*f_*\big({\rm tr}_k\circ\lambda(\overline{E})\big) \ar[r]^-{\rm Id} \ar[d]^-{\rm Id} & h_*{\rm tr}_k\circ\lambda\big(f_*(\overline{E})\big) \ar[r] \ar[d]^-{\rm Id} & 0 \ar[d]\\
h_*f_*\big({\rm tr}_k\circ\lambda(\overline{E})\big) \ar[r]^-{\rm Id} \ar[d] & {\rm tr}_k\circ\lambda\big(h_*f_*(\overline{E})\big) \ar[r] \ar[d] & 0 \ar[d]\\
0 \ar[r] & 0 \ar[r] & 0.}$$
Again, we construct a hermitian bundle ${\rm tr}_2\big(H'^l_{h\circ f}(\overline{E})\big)$ on $Z\times(\mathbb{P}^1)^{k+2}$ as the second transgression bundle of $H'^l_{h\circ f}(\overline{E})$ such that it satisfies the following relations:
$${\rm tr}_2\big(H'^l_{h\circ f}(\overline{E})\big)\mid_{Z\times \{0\}\times (\mathbb{P}^1)^{k+1}}={\rm tr}_1\big(H(f_*\overline{E},h_*)\big),$$
$${\rm tr}_2\big(H'^l_{h\circ f}(\overline{E})\big)\mid_{Z\times \{\infty\}\times (\mathbb{P}^1)^{k+1}}={\rm tr}_1\Big(h_*f_*\big({\rm tr}_k\circ\lambda(\overline{E})\big)\to h_*f_*\big({\rm tr}_k\circ\lambda(\overline{E})\big)\Big),$$
$${\rm tr}_2\big(H'^l_{h\circ f}(\overline{E})\big)\mid_{Z\times \mathbb{P}^1\times \{0\}\times (\mathbb{P}^1)^k}={\rm tr}_1\big(H(\overline{E},h_*f_*)\big),$$
$${\rm tr}_2\big(H'^l_{h\circ f}(\overline{E})\big)\mid_{Z\times \mathbb{P}^1\times \{\infty\}\times (\mathbb{P}^1)^k}={\rm tr}_1\big(h_*H(\overline{E},f_*)\big)$$
and
$${\rm tr}_2\big(H'^l_{h\circ f}(\overline{E})\big)\mid_{Z\times (\mathbb{P}^1)^{i+1}\times \{0\}\times (\mathbb{P}^1)^{k-i}}={\rm tr}_2\big(H'^l_{h\circ f}(\partial_i^0\overline{E})\big),$$
$${\rm tr}_2\big(H'^l_{h\circ f}(\overline{E})\big)\mid_{Z\times (\mathbb{P}^1)^{i+1}\times \{\infty\}\times (\mathbb{P}^1)^{k-i}}={\rm tr}_2\big(H'^l_{h\circ f}(\partial_i^{-1}\overline{E})\big)\oplus {\rm tr}_2\big(H'^l_{h\circ f}(\partial_i^{1}\overline{E})\big)$$
for $i=1,\cdots,k$. We set
$$\mathbf{H}_{2,k}(\overline{E}):=\frac{(-1)^{k+2}}{2(k+2)!(2\pi i)^{k+2}}\int_{(\mathbb{P}^1)^{k+2}}{\rm ch}_g^0\Big({\rm tr}_2\big(H'^l_{h\circ f}(\overline{E})\big)\Big)\wedge C_{k+2}(\log\mid z_1\mid^2,\cdots,\log\mid z_{k+2}\mid^2).$$
Then $\mathbf{H}_{2,k}$ defines a map $$\mathbf{H}_{2,k}: \widetilde{\Z}C_k^{(f, l)-\text{ac}}(X,\mu_n)\to \bigoplus_{p\geq0}D^{2p-k-2}(Z,p)_{R_n}$$ which satisfies the following differential equation
\begin{multline*}
d_\mathcal{D}\circ\mathbf{H}_{2,k}(\overline{E})=\frac{(-1)^{k+2}}{2(k+2)!(2\pi i)^{k+2}}\int_{(\mathbb{P}^1)^{k+2}}{\rm ch}_g^0\Big({\rm tr}_2\big(H'^l_{h\circ f}(\overline{E})\big)\Big)\wedge d_\mathcal{D}C_{k+2}(\log\mid z_1\mid^2,\cdots,\log\mid z_{k+2}\mid^2)\\
=\frac{(-1)^{k+2}}{2(k+2)!(2\pi i)^{k+2}}\int_{(\mathbb{P}^1)^{k+2}}{\rm ch}_g^0\Big({\rm tr}_2\big(H'^l_{h\circ f}(\overline{E})\big)\Big)\wedge\big((-\frac{1}{2})(k+2)\sum_{j=1}^{k+2}(-1)^{j-1}(-4\pi i)(\delta_{z_j=\infty}-\delta_{z_j=0})\hfill\\
{\wedge C_{k+1}(\log\mid z_1\mid^2,\cdots,\widehat{\log\mid z_j\mid^2},\cdots,\log\mid z_{k+2}\mid^2)\big)}\\
=\mathbf{H}_{2,k-1}\circ d(\overline{E})-\frac{(-1)^{k+1}}{2(k+1)!(2\pi i)^{k+1}}\int_{(\mathbb{P}^1)^{k+1}}\Bigg[\bigg({\rm ch}_g^0\Big({\rm tr}_1\big(H(\overline{E},h_*f_*)\big)\Big)-{\rm ch}_g^0\Big({\rm tr}_1\big(h_*H(\overline{E},f_*)\big)\Big)\bigg)\hfill\\
{\wedge C_{k+1}(\log\mid z_1\mid^2,\cdots,\log\mid z_{k+1}\mid^2)\Bigg]}\\
+\frac{(-1)^{k+1}}{2(k+1)!(2\pi i)^{k+1}}\int_{(\mathbb{P}^1)^{k+1}}{\rm ch}_g^0\Big({\rm tr}_1\big(H(f_*\overline{E},h_*)\big)\Big)\wedge C_{k+1}(\log\mid z_1\mid^2,\cdots,\log\mid z_{k+1}\mid^2)\\
=\mathbf{H}_{2,k-1}\circ d(\overline{E})+\Pi'^h_k(f_*\overline{E})\hfill\\
-\frac{(-1)^{k+1}}{2(k+1)!(2\pi i)^{k+1}}\int_{(\mathbb{P}^1)^{k+1}}{\rm ch}_g^0\Big({\rm tr}_1\big(H(\overline{E},h_*f_*)\big)\Big)\wedge C_{k+1}(\log\mid z_1\mid^2,\cdots,\log\mid z_{k+1}\mid^2)\\
+\frac{(-1)^{k+1}}{2(k+1)!(2\pi i)^{k+1}}\int_{(\mathbb{P}^1)^{k+1}}{\rm ch}_g^0\Big({\rm tr}_1\big(h_*H(\overline{E},f_*)\big)\Big)\wedge C_{k+1}(\log\mid z_1\mid^2,\cdots,\log\mid z_{k+1}\mid^2).
\end{multline*}

Thirdly, notice that the short exact sequence
$$\xymatrix{0 \ar[r] & h_*{\rm tr}_1\big(H(\overline{E},f_*)\big) \ar[r]^-{\rm Id} & {\rm tr}_1\big(h_*H(\overline{E},f_*)\big) \ar[r] & 0 \ar[r] & 0}$$
forms an emi-$1$-cube of hermitian bundles on $Z\times\mathbb{P}^1\times (\mathbb{P}^1)^k$, we denote it by $\widetilde{H}_{h\circ f}(\overline{E})$. Using the same construction as before, we construct a transgression bundle ${\rm tr}_1\big(\widetilde{H}_{h\circ f}(\overline{E})\big)$ on $Z\times\mathbb{P}^1\times \mathbb{P}^1\times (\mathbb{P}^1)^k$ satisfying
$${\rm tr}_1\big(\widetilde{H}_{h\circ f}(\overline{E})\big)\mid_{Z\times\{0\}\times (\mathbb{P}^1)^{k+1}}={\rm tr}_1\big(h_*H(\overline{E},f_*)\big),$$
$${\rm tr}_1\big(\widetilde{H}_{h\circ f}(\overline{E})\big)\mid_{Z\times\{\infty\}\times (\mathbb{P}^1)^{k+1}}=h_*{\rm tr}_1\big(H(\overline{E},f_*)\big),$$
$${\rm tr}_1\big(\widetilde{H}_{h\circ f}(\overline{E})\big)\mid_{Z\times\mathbb{P}^1\times \{0\}\times (\mathbb{P}^1)^k}={\rm tr}_1\big(h_*{\rm tr}_k\circ \lambda(f_*\overline{E})\to h_*{\rm tr}_k\circ \lambda(f_*\overline{E})\to 0\big),$$
$${\rm tr}_1\big(\widetilde{H}_{h\circ f}(\overline{E})\big)\mid_{Z\times\mathbb{P}^1\times \{\infty\}\times (\mathbb{P}^1)^k}={\rm tr}_1\big(h_*f_*{\rm tr}_k\circ \lambda(\overline{E})\to h_*f_*{\rm tr}_k\circ \lambda(\overline{E})\to 0\big)$$
and
$${\rm tr}_1\big(\widetilde{H}_{h\circ f}(\overline{E})\big)\mid_{Z\times(\mathbb{P}^1)^{i+1}\times \{0\}\times (\mathbb{P}^1)^{k-i}}={\rm tr}_1\big(\widetilde{H}_{h\circ f}(\partial_i^0\overline{E})\big),$$
$${\rm tr}_1\big(\widetilde{H}_{h\circ f}(\overline{E})\big)\mid_{Z\times(\mathbb{P}^1)^{i+1}\times \{\infty\}\times (\mathbb{P}^1)^{k-i}}={\rm tr}_1\big(\widetilde{H}_{h\circ f}(\partial_i^{-1}\overline{E})\big)\oplus {\rm tr}_1\big(\widetilde{H}_{h\circ f}(\partial_i^{1}\overline{E})\big)$$
for $i=1,\cdots,k$. So if we set
$$\mathbf{H}_{3,k}(\overline{E}):=\frac{(-1)^{k+2}}{2(k+2)!(2\pi i)^{k+2}}\int_{(\mathbb{P}^1)^{k+2}}{\rm ch}_g^0\Big({\rm tr}_1\big(\widetilde{H}_{h\circ f}(\overline{E})\big)\Big)\wedge C_{k+2}(\log\mid z_1\mid^2,\cdots,\log\mid z_{k+2}\mid^2),$$
it satisfies the differential equation
\begin{multline*}
d_\mathcal{D}\circ\mathbf{H}_{3,k}(\overline{E})=\frac{(-1)^{k+2}}{2(k+2)!(2\pi i)^{k+2}}\int_{(\mathbb{P}^1)^{k+2}}{\rm ch}_g^0\big({\rm tr}_1(\widetilde{H}_{h\circ f})\big)\wedge d_\mathcal{D}C_{k+2}(\log\mid z_1\mid^2,\cdots,\log\mid z_{k+2}\mid^2)\\
=\mathbf{H}_{3,k-1}\circ d(\overline{E})+\frac{(-1)^{k+1}}{2(k+1)!(2\pi i)^{k+1}}\int_{(\mathbb{P}^1)^{k+1}}{\rm ch}_g^0\Big({\rm tr}_1\big(h_*H(\overline{E},f_*)\big)\Big)\wedge
C_{k+1}(\log\mid z_1\mid^2,\cdots,\log\mid z_{k+1}\mid^2)\hfill\\
-\frac{(-1)^{k+1}}{2(k+1)!(2\pi i)^{k+1}}\int_{(\mathbb{P}^1)^{k+1}}{\rm ch}_g^0\big(h_*{\rm tr}_1\big(H(\overline{E},f_*)\big)\wedge C_{k+1}(\log\mid z_1\mid^2,\cdots,\log\mid z_{k+1}\mid^2)
\end{multline*}

Finally, we set
$$\mathbf{H}_{4,k}(\overline{E}):=\frac{(-1)^{k+2}}{(k+2)!(2\pi i)^{k+1}}\int_{(\mathbb{P}^1)^{k+1}}C_{k+2}\bigg(T_g\Big(h,h^{{\rm tr}_1\big(H(\overline{E},f_*)\big)}\Big),\log\mid z_1\mid^2,\cdots,\log\mid z_{k+1}\mid^2\bigg),$$
then it satisfies
\begin{multline*}
d_\mathcal{D}\circ\mathbf{H}_{4,k}(\overline{E})=\frac{(-1)^{k+2}}{(k+2)!(2\pi i)^{k+1}}\int_{(\mathbb{P}^1)^{k+1}}C_{k+2}\bigg(T_g\Big(h,h^{{\rm tr}_1\big(H(\overline{E},f_*)\big)}\Big),\log\mid z_1\mid^2,\cdots,\log\mid z_{k+1}\mid^2\bigg)\\
=\mathbf{H}_{4,k-1}\circ d(\overline{E})+\frac{(-1)^{k+1}}{2(k+1)!(2\pi i)^{k+1}}\int_{(\mathbb{P}^1)^{k+1}}{\rm ch}_g^0\Big(h_*{\rm tr}_1\big(H(\overline{E},f_*)\big)\Big)\wedge C_{k+1}(\log\mid z_1\mid^2,\cdots,\log\mid z_{k+1}\mid^2)\hfill\\
-\frac{(-1)^{k+1}}{2(k+1)!(2\pi i)^{k+1}}\int_{(\mathbb{P}^1)^{k+1}}\bigg(\frac{1}{(2\pi i)^{r_h}}\int_{Y_{\mu_n}}{\rm Td}_g(\overline{Th}){\rm ch}_g^0\Big({\rm tr}_1\big(H(\overline{E},f_*)\big)\Big)\bigg)\wedge C_{k+1}(\log\mid z_1\mid^2,\cdots,\log\mid z_{k+1}\mid^2)\hfill\\
-\frac{(-1)^{k+1}}{(k+1)!(2\pi i)^{k}}\int_{(\mathbb{P}^1)^{k}}C_{k+1}\Big(T_g\big(h,h^{{\rm tr}_k\circ\lambda(f_*\overline{E})}\big),\log\mid z_1\mid^2,\cdots,\log\mid z_{k}\mid^2\Big)\\
+\frac{(-1)^{k+1}}{(k+1)!(2\pi i)^{k}}\int_{(\mathbb{P}^1)^{k}}C_{k+1}\Big(T_g\big(h,h^{f_*{\rm tr}_k\circ\lambda(\overline{E})}\big),\log\mid z_1\mid^2,\cdots,\log\mid z_{k}\mid^2\Big)\\
=\mathbf{H}_{4,k-1}\circ d(\overline{E})-{h_{\mu_n}}_*\circ\big({\rm Td}_g(\overline{Th})\bullet \Pi'^f_k(\overline{E})\big)\hfill\\
+\frac{(-1)^{k+1}}{2(k+1)!(2\pi i)^{k+1}}\int_{(\mathbb{P}^1)^{k+1}}{\rm ch}_g^0\Big(h_*{\rm tr}_1\big(H(\overline{E},f_*)\big)\Big)\wedge C_{k+1}(\log\mid z_1\mid^2,\cdots,\log\mid z_{k+1}\mid^2)\\
-\Pi''^h_k(f_*\overline{E})+\frac{(-1)^{k+1}}{(k+1)!(2\pi i)^{k}}\int_{(\mathbb{P}^1)^{k}}C_{k+1}\Big(T_g\big(h,h^{f_*{\rm tr}_k\circ\lambda(\overline{E})}\big),\log\mid z_1\mid^2,\cdots,\log\mid z_{k}\mid^2\Big)
\end{multline*}

\begin{prop}\label{304}
Let notations and assumptions be as above, then the chain homotopy $\Pi^l_k=\Pi'^l_k+\Pi''^l_k$ is homotopic to $\Pi^{(1)}_k+{h_{\mu_n}}_*\circ \big({\rm Td}_g(\overline{Th})\bullet\Pi^f_k\big)+\Pi^h_k\circ f_*-\Pi^{(3')}_k$.
\end{prop}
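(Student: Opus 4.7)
The plan is to produce an explicit chain homotopy $\widetilde{\mathbf{H}}$ between the two chain homotopies $\Pi^g_k$ and $\Pi^{(1)}_k+{h_{\mu_n}}_*({\rm Td}_g(\overline{Th})\bullet\Pi^f_k)+\Pi^h_k\circ f_*-\Pi^{(3')}_k$ by adjoining a suitable signed sum $\epsilon_1\mathbf{H}_{1,k}+\epsilon_2\mathbf{H}_{2,k}+\epsilon_3\mathbf{H}_{3,k}+\epsilon_4\mathbf{H}_{4,k}$ (with each $\epsilon_i\in\{\pm1\}$) to one additional correction term $\Delta_k$ built out of the transitivity defect $\Delta(f,h,\omega^W,\omega^V,\overline{E})$ furnished by Theorem~\ref{301} and Remark~\ref{deltac}. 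Concretely I would set
$$\Delta_k(\overline{E}):=\frac{(-1)^k}{(k+1)!(2\pi i)^k}\int_{(\mathbb{P}^1)^k}C_{k+1}\bigl(\Delta(f,h,\omega^W,\omega^V,{\rm tr}_k\circ\lambda(\overline{E})),\log\mid z_1\mid^2,\ldots,\log\mid z_k\mid^2\bigr),$$
using the formal product introduced in~(\ref{NewC}). The restriction compatibility recorded in Remark~\ref{deltac} guarantees, exactly as in the proof of Proposition~\ref{210}, that $\Delta_k$ vanishes on degenerate cubes and that $\Delta_{k-1}\circ d(\overline{E})-d\Delta_k(\overline{E})$ equals $\frac{(-1)^{k+1}}{(k+1)!(2\pi i)^k}\int C_{k+1}(d\Delta,\log\mid z\mid^2,\ldots)$.

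Next I would substitute into this identity the explicit formula for $d\Delta$ supplied by Theorem~\ref{301}. Each of its five summands matches, after integration against $C_{k+1}$, exactly one of the terms that otherwise remain uncancelled in the sum $\sum_i\epsilon_i(\mathbf{H}_{i,k}\circ d - d\mathbf{H}_{i,k})$: the $T_g(g)$-summand recovers $\Pi''^g_k$; the $T_g(h,h^{f_*E})$-summand cancels the leftover analytic torsion term produced by $\mathbf{H}_{4,k}$; the $\frac{1}{(2\pi i)^{r_h}}\int_{V_{\mu_n}/Z}{\rm Td}_g(\overline{Th})T_g(f)$-summand becomes ${h_{\mu_n}}_*({\rm Td}_g(\overline{Th})\bullet\Pi''^f_k)$ via the projection formula; the ${\rm ch}_g(\overline{E}(f,h,h\circ f))$-summand kills the leftover Chern-form contribution $\int{\rm ch}_g^0({\rm tr}_1(H_{h\circ f}({\rm tr}_k\circ\lambda\overline{E})))\wedge C_{k+1}$ left by $\mathbf{H}_{1,k}$; and the $\frac{1}{(2\pi i)^{r_g}}\int_{W_{\mu_n}/Z}{\rm Td}_g(\overline{T}(f,h,h\circ f)){\rm ch}_g(\overline{E})$-summand reconstructs $\Pi^{(3')}_k$. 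Meanwhile, the three ``mixed'' Chern-form contributions arising from $\mathbf{H}_{1,k},\ldots,\mathbf{H}_{4,k}$ (involving respectively ${\rm tr}_1(H(\overline{E},h_*f_*))$, ${\rm tr}_1(h_*H(\overline{E},f_*))$ and $h_*{\rm tr}_1(H(\overline{E},f_*))$) each appear twice in the total sum and cancel pairwise once the $\epsilon_i$'s have been correctly chosen.

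Collecting everything yields $\widetilde{\mathbf{H}}\circ d-d\widetilde{\mathbf{H}}=\Pi^g_k-(\Pi^{(1)}_k+{h_{\mu_n}}_*({\rm Td}_g(\overline{Th})\bullet\Pi^f_k)+\Pi^h_k\circ f_*-\Pi^{(3')}_k)$, which by Definition~\ref{homotopy} is precisely the homotopy relation asserted; Proposition~\ref{303} allows one to pass freely between $\Pi^{(3)}_k$ and $\Pi^{(3')}_k$. The main obstacle will be the sign bookkeeping for the $\epsilon_i$'s: four auxiliary maps produce Chern-form integrals that pair up in three distinct ways, and getting each pair to cancel while simultaneously aligning the $\Pi$-type contributions with the five pieces of $d\Delta$ requires careful tracking of every overall factor of the form $(-1)^{k+j}/(2(k+j)!(2\pi i)^{k+j})$. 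A minor secondary technical point is that the degenerate-cube vanishing of $\Delta_k$ must be verified through the restriction compatibility of Remark~\ref{deltac} applied to $Z\hookrightarrow Z\times(\mathbb{P}^1)^k$, but this is the exact analogue of the corresponding step in the proof of Proposition~\ref{210}, and once settled the computation above closes the argument.
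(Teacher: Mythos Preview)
Your proposal is correct and follows essentially the same approach as the paper's proof: the paper takes the specific sign choice $(\epsilon_1,\epsilon_2,\epsilon_3,\epsilon_4)=(+1,+1,-1,-1)$, so that $\widetilde{\mathbf{H}}=\mathbf{H}_{1,k}+\mathbf{H}_{2,k}-\mathbf{H}_{3,k}-\mathbf{H}_{4,k}+\Delta_k$, and then the three mixed Chern-form contributions and the five pieces of $d\Delta$ cancel and combine exactly as you describe.
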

\begin{proof}
Let $\overline{E}$ be a hermitian $k$-cube in $\widehat{\mathcal{P}}(X,\mu_n)$ which is $f$-acyclic and $l$-acyclic. Using the above differential equations concerning $\mathbf{H}_{i,k}$, we obtain that
\begin{multline*}
(\mathbf{H}_{1,k-1}+\mathbf{H}_{2,k-1}-\mathbf{H}_{3,k-1}-\mathbf{H}_{4,k-1})\circ d(\overline{E})-d_\mathcal{D}\circ (\mathbf{H}_{1,k}+\mathbf{H}_{2,k}-\mathbf{H}_{3,k}-\mathbf{H}_{4,k})(\overline{E})\\
=\Pi'^l_k(\overline{E})-\Pi^{(1)}_k(\overline{E})-\Pi'^h_k(f_*\overline{E})-\Pi''^h_k(f_*\overline{E})-{h_{\mu_n}}_*\circ\big({\rm Td}_g(\overline{Th})\bullet \Pi'^f_k(\overline{E})\big)\hfill\\
+\frac{(-1)^{k+1}}{2(k+1)!(2\pi i)^{k+1}}\int_{(\mathbb{P}^1)^{k+1}}{\rm ch}_g^0\bigg({\rm tr}_1\Big(H_{h\circ f}\big({\rm tr}_k\circ\lambda(\overline{E})\big)\Big)\bigg)\wedge C_{k+1}(\log\mid z_1\mid^2,\cdots,\log\mid z_{k+1}\mid^2)\\
+\frac{(-1)^{k+1}}{(k+1)!(2\pi i)^{k}}\int_{(\mathbb{P}^1)^{k}}C_{k+1}\Big(T_g\big(h,h^{f_*{\rm tr}_k\circ\lambda(\overline{E})}\big),\log\mid z_1\mid^2,\cdots,\log\mid z_{k}\mid^2\Big)
\end{multline*}
On the other hand, according to Theorem~\ref{301}, we have
\begin{multline*}
\frac{(-1)^{k+1}}{(k+1)!(2\pi i)^{k}}\int_{(\mathbb{P}^1)^{k}}C_{k+1}\Big(T_g\big(g,h^{{\rm tr}_k\circ\lambda(\overline{E})}\big),\log\mid z_1\mid^2,\cdots,\log\mid z_{k}\mid^2\Big)\\
-\frac{(-1)^{k+1}}{(k+1)!(2\pi i)^{k}}\int_{(\mathbb{P}^1)^{k}}C_{k+1}\Big(T_g\big(h,h^{f_*{\rm tr}_k\circ\lambda(\overline{E})}\big),\log\mid z_1\mid^2,\cdots,\log\mid z_{k}\mid^2\Big)-{h_{\mu_n}}_*\circ\big({\rm Td}_g(\overline{Th})\bullet \Pi''^f_k(\overline{E})\big)\hfill\\
=\frac{(-1)^{k+1}}{2(k+1)!(2\pi i)^{k+1}}\int_{(\mathbb{P}^1)^{k+1}}{\rm ch}_g^0\bigg({\rm tr}_1\Big(H_{h\circ f}\big({\rm tr}_k\circ\lambda(\overline{E})\big)\Big)\bigg)\wedge C_{k+1}(\log\mid z_1\mid^2,\cdots,\log\mid z_{k+1}\mid^2)-\Pi^{(3')}_k(\overline{E})\hfill\\
+\frac{(-1)^{k+1}}{(k+1)!(2\pi i)^k}\int_{(\mathbb{P}^1)^k}C_{k+1}\Big(d_\mathcal{D}\Delta\big(f,h,\omega^X,\omega^Y,{\rm tr}_k\circ\lambda(\overline{E})\big),\log\mid z_1\mid^2,\cdots,\log\mid z_{k}\mid^2\Big).
\end{multline*}

We then formally define a product $$C_{k+1}\Big(\Delta\big(f,h,\omega^X,\omega^Y,{\rm tr}_k\circ\lambda(\overline{E})\big),\log\mid z_1\mid^2,\cdots,\log\mid z_{k}\mid^2\Big)$$ in the same way as (\ref{NewC}), and we set
$$\Delta_k(\overline{E})=\frac{(-1)^{k}}{(k+1)!(2\pi i)^k}\int_{(\mathbb{P}^1)^k}C_{k+1}\Big(\Delta\big(f,h,\omega^X,\omega^Y,{\rm tr}_k\circ\lambda(\overline{E})\big),\log\mid z_1\mid^2,\cdots,\log\mid z_{k}\mid^2\Big).$$
It is readily checked by Lemma~\ref{deltac} that
\begin{align*}
&\Delta_{k-1}(d\overline{E})-d_\mathcal{D}\Delta_k(\overline{E})\\
=&\frac{(-1)^{k+1}}{(k+1)!(2\pi i)^k}\int_{(\mathbb{P}^1)^k}C_{k+1}\Big(d_\mathcal{D}\Delta\big(f,h,\omega^X,\omega^Y,{\rm tr}_k\circ\lambda(\overline{E})\big),\log\mid z_1\mid^2,\cdots,\log\mid z_{k}\mid^2\Big).
\end{align*}

Combing all the above computations, we finally get
\begin{multline*}
(\mathbf{H}_{1,k-1}+\mathbf{H}_{2,k-1}-\mathbf{H}_{3,k-1}-\mathbf{H}_{4,k-1}+\Delta_{k-1})(d\overline{E})-d_\mathcal{D}
(\mathbf{H}_{1,k}+\mathbf{H}_{2,k}-\mathbf{H}_{3,k}-\mathbf{H}_{4,k}+\Delta_k)(\overline{E})\\
=\Pi'^l_k(\overline{E})-\Pi^{(1)}_k(\overline{E})-\Pi'^h_k(f_*\overline{E})-\Pi''^h_k(f_*\overline{E})+\Pi^{(3')}_k(\overline{E})+\Pi''^l_k(\overline{E})\hfill\\
-{h_{\mu_n}}_*\circ\big({\rm Td}_g(\overline{Th})\bullet \Pi'^f_k(\overline{E})\big)-{h_{\mu_n}}_*\circ\big({\rm Td}_g(\overline{Th})\bullet \Pi''^f_k(\overline{E})\big)\\
=\Pi^l_k(\overline{E})-\Big(\Pi^{(1)}_k(\overline{E})+{h_{\mu_n}}_*\circ \big({\rm Td}_g(\overline{Th})\bullet\Pi^f_k(\overline{E})\big)+\Pi^h_k(f_*\overline{E})-\Pi^{(3')}_k(\overline{E})\Big)\hfill
\end{multline*}
So we are done.
\end{proof}

\begin{cor}\label{305}
Let $f: X\to Y$, $h: Y\to Z$ and $l: X\to Z$ be three equivariant morphisms between regular $\mu_n$-projective schemes, which are all smooth over the generic fibres. Assume that $l=h\circ f$ and that the $\mu_n$-action on $Z$ is trivial. Then the direct image map $l_*$ is equal to the composition $h_*\circ f_*$ from $\widehat{K}_m(X,\mu_n)_{\Q}$ to $\widehat{K}_m(Z,\mu_n)_{\Q}$ for any $m\geq 1$.
\end{cor}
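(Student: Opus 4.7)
The plan is to run in exact parallel with the proof of Corollary~\ref{212}, replacing Proposition~\ref{210} (the single-morphism anomaly) by Proposition~\ref{304} (the functoriality comparison). By Remark~\ref{206}(iv) the rationalised arithmetic $K$-group is identified with the homology of the simple complex $s({\rm ch}_g)$, so it suffices to show that the two morphisms of simple complexes $s({\rm ch}_g^X)\to s({\rm ch}_g^Z)$ representing $g_*$ and $h_*\circ f_*$ are chain homotopic. Each such morphism is determined by a triple $(\alpha,\beta,h)$ consisting of a pushforward on hermitian cubes, a Todd-twisted pushforward on Deligne complexes, and a chain homotopy for the regulator square~(\ref{atc}).

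Next I would unwind the two triples and fit them into the hypotheses of Lemma~\ref{NewCA}. For $g_*$ the triple is $\bigl(g_*,\,{g_{\mu_n}}_*\circ{\rm Td}_g(\overline{Tg})\bullet(\cdot),\,\Pi^g\bigr)$; composing the triples for $f_*$ and $h_*$ in the standard way gives, for $h_*\circ f_*$, the triple $\bigl(h_*\circ f_*,\,{h_{\mu_n}}_*\circ({\rm Td}_g(\overline{Th})\bullet(\cdot))\circ {f_{\mu_n}}_*\circ({\rm Td}_g(\overline{Tf})\bullet(\cdot)),\,\Pi^h\circ f_*+{h_{\mu_n}}_*\circ({\rm Td}_g(\overline{Th})\bullet\Pi^f)\bigr)$. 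The hermitian $(k{+}1)$-cube $H_{h\circ f}(\overline{E})$ provides a cube-level chain homotopy $\pi_f$ between $g_*$ and $h_*\circ f_*$ with ${\rm ch}_g^Z\circ\pi_f=\Pi^{(1)}$ (Proposition~\ref{302}), while the first identity of Proposition~\ref{303} together with the short exact sequence $\overline{T}(f,h,h\circ f)$ of relative tangent bundles supplies a Deligne-level chain homotopy $\pi_g$ between the two Todd-twisted pushforwards with $\pi_g\circ{\rm ch}_g^X$ agreeing, up to the natural homotopy of Proposition~\ref{303}, with $\Pi^{(3)}$ and hence with $\Pi^{(3')}$. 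The triviality of the $\mu_n$-action on $Z$ enters precisely here: it forces $Z_{\mu_n}=Z$, so that the fibre integrations appearing in $\Pi^{(1)},\Pi^{(3)},\Pi^{(3')}$ land directly in $D^*(Z,p)$ without further correction.

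Proposition~\ref{304} is then exactly the statement that $j\circ\pi_f+h_2-\pi_g\circ i=\Pi^{(1)}+{h_{\mu_n}}_*\circ({\rm Td}_g(\overline{Th})\bullet\Pi^f)+\Pi^h\circ f_*-\Pi^{(3')}$ is homotopic to $h_1=\Pi^g$ as chain homotopies. Feeding this into Lemma~\ref{NewCA} produces a chain homotopy between the two morphisms of simple complexes attached to $g_*$ and $h_*\circ f_*$; passing to $H_m(\cdot,\Q)$ and invoking Remark~\ref{206}(iv) yields the desired equality of $g_*$ and $h_*\circ f_*$ on $\widehat{K}_m(X,\mu_n)_{\Q}\to\widehat{K}_m(Z,\mu_n)_{\Q}$ for every $m\geq 1$. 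The real work sits inside Proposition~\ref{304}, which rests on the Bismut--Ma type functoriality of equivariant analytic torsion forms for a composition of submersions (Theorem~\ref{301}) together with the delicate bookkeeping of the secondary transgression bundles $\mathbf{H}_{1,k},\ldots,\mathbf{H}_{4,k}$ and the correction term $\Delta_k$; modulo that analytic input, the present corollary is a purely formal consequence.
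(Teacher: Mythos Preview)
Your proposal is correct and follows exactly the approach the paper has in mind: the corollary is the direct analogue of Corollary~\ref{212}, obtained by feeding Proposition~\ref{304} (in place of Proposition~\ref{210}) into Lemma~\ref{NewCA} and then invoking Remark~\ref{206}(iv). The paper states the corollary without proof immediately after Proposition~\ref{304}, and your detailed unpacking of the two triples, the homotopies $\pi_f$ and $\pi_g$ from Propositions~\ref{302} and~\ref{303}, and the role of the triviality of the $\mu_n$-action on $Z$ faithfully reconstructs what that omitted argument would be.
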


\section{The Lefschetz-Riemann-Roch theorem}

\subsection{The statement}
In order to formulate the Lefschetz-Riemann-Roch theorem for higher equivariant arithmetic K-groups, we need to introduce the equivariant $R$-genus due to Bismut. Let $X$ be a $\mu_n$-equivariant smooth algebraic variety over $\C$, and let $\overline{E}$ be a $\mu_n$-equivariant hermitian vector bundle on $X$. For $\zeta\in \mu_n(\C)$ and $s>1$, we consider the following Lerch zeta function
$$L(\zeta,s)=\sum_{k=1}^\infty\frac{\zeta^k}{k^s}$$
and its meromorphic continuation to the whole complex plane.
Define a formal power series in the variable $x$ as
$$\widetilde{R}(\zeta,x):=\sum_{n=0}^\infty\big(\frac{\partial L}{\partial s}(\zeta,-n)+L(\zeta,-n)\sum_{j=1}^n\frac{1}{2j}\big)\frac{x^n}{n!}.$$

\begin{defn}\label{401}
The Bismut's equivariant $R$-genus of an equivariant hermitian vector bundle $\overline{E}$ with
$\overline{E}\mid_{X_{\mu_n}}=\sum_{\zeta\in \mu_n(\C)}\overline{E}_\zeta$ is defined as
$$R_g(\overline{E}):=\sum_{\zeta\in \mu_n(\C)}\big({\rm Tr}\widetilde{R}(\zeta,-\Omega^{\overline{E}_\zeta})-{\rm
Tr}\widetilde{R}(1/\zeta,\Omega^{\overline{E}_\zeta})\big),$$
where $\Omega^{\overline{E}_\zeta}$ is the curvature form associated to $\overline{E}_\zeta$.
\end{defn}

Now, let $X$ be a regular $\mu_n$-projective arithmetic scheme over an arithmetic ring $(D,\Sigma,F_\infty)$ and we construct a naive commutative diagram of homological complexes
\begin{align}\label{genus}
\xymatrix{ \widetilde{\Z}C_*(X,\mu_n) \ar[r]^-{{\rm ch}_g} \ar[d]^-{0} & \bigoplus_{p\geq0}D^{2p-*}(X_{\mu_n},p)_{R_n} \ar[d]^-{0} \\
\widetilde{\Z}C_*(X,\mu_n) \ar[r]^-{{\rm ch}_g} & \bigoplus_{p\geq0}D^{2p-*}(X_{\mu_n},p)_{R_n}}
\end{align}
where $0$ stands for the zero map. Let $\overline{N}$ be a $\mu_n$-equivariant hermitian vector bundle on $X$, we shall formally regard the $R$-genus $R_g(\overline{N})$ as an element in $\bigoplus_{p\geq0}D^{2p-1}(X,p)$. It is a $d$-closed form. Denote by $p_0$ the projection from $X\times (\mathbb{P}^1)^\cdot$ to $X$.
For any hermitian $k$-cube $\overline{E}$ in $\widehat{\mathcal{P}}(X,\mu_n)$, we set
$$\Pi_R(\overline{E})=\frac{(-1)^{k}}{(k+1)!(2\pi i)^k}\int_{(\mathbb{P}^1)^k}C_{k+1}\Big(R_g(p_0^*\overline{N}){\rm ch}_g^0\big({\rm tr}_k\circ\lambda(\overline{E})\big),\log\mid z_1\mid^2,\cdots,\log\mid z_{k}\mid^2\Big).$$
It is clear that $\Pi_R(\overline{E})$ extends to be a map $\Pi_R: \widetilde{\Z}C_k(X,\mu_n)\to \bigoplus_{p\geq0}D^{2p-k-1}(X_{\mu_n},p)_{R_n}$ which provides a chain homotopy of the square (\ref{genus}). Therefore, we get an endomorphism of $\widehat{K}_m(X,\mu_n)$ for any $m\geq1$. This endomorphism will be denoted by $\otimes R_g(\overline{N})$.

Again, by \cite[Remark 2.4, Lemma 2.5]{T3}, the chain homotopy $\Pi_R$ is homotopic to the chain homotopy $\Pi'_R$ defined by
$$\Pi'_R(\overline{E})=\frac{(-1)^{k+1}}{2k!(2\pi i)^k}\int_{(\mathbb{P}^1)^k}R_g(p_0^*\overline{N})\bullet{\rm ch}_g^0\big({\rm tr}_k\circ\lambda(\overline{E})\big)\bullet C_{k}(\log\mid z_1\mid^2,\cdots,\log\mid z_{k}\mid^2),$$
and hence is homotopic to $-R_g(\overline{N})\bullet{\rm ch}_g(\overline{E})$ by the projection formula. Let $(x,\alpha)$ be an element in $\widehat{K}_m(X,\mu_n)_\Q$, then $dx=0$ and ${\rm ch}_g(x)$ is a $d_\mathcal{D}$-closed form. Let $(0,\alpha)$ and $(0,\alpha')$ be two elements in $\widehat{K}_m(X,\mu_n)_\Q$, then $(0,\alpha)=(0,\alpha')$ if $\alpha$ and $\alpha'$ have the same cohomology class in $\bigoplus_{p\geq0}H_\mathcal{D}^*\big(X_{\mu_n},\mathbb{R}(p)\big)_{R_n}$. Notice that the product $\bullet$ on the Deligne-Beilinson complex induces the product on the real Deligne-Beilinson cohomology. Then, modulo torsion, the endomorphism $\otimes R_g(\overline{N})$ is independent of the choice of the metric on $N$ and it can be written as $\otimes R_g(N)$.

Assume that $\rho$ is any prime ideal in $R(\mu_n):=K_0({\rm Spec}\mathbb{Z},\mu_n)\cong \Z[T]/{(1-T^n)}$ which doesn't contain the elements $1-T^k$ for $k=1,\ldots,n-1$. For instance, $\rho$ can be chosen to be the kernel of the natural morphism $\Z[T]/{(1-T^n)}\to \Z[T]/{(\Phi_n)}$ where $\Phi_n$ stands for the $n$-th cyclotomic polynomial. Let $X_{\mu_n}$ be the fixed point subscheme of $X$, and let $\overline{N}_{X/{X_{\mu_n}}}$ be the normal bundle of $X_{\mu_n}$ in $X$ with some $\mu_n$-invariant hermitian metric. We set
$$\Lambda_R:=\big({\rm Id}-\otimes R_g(N_{X/{X_{\mu_n}}})\big)\circ\otimes \lambda_{-1}^{-1}(\overline{N}_{X/{X_{\mu_n}}}^\vee),$$
it is a well-defined endomorphism of $\widehat{K}_m(X_{\mu_n},\mu_n)_\rho\otimes\Q$. Then the arithmetic Lefschetz-Riemann-Roch theorem for higher equivariant arithmetic K-groups can be formulated as follows.

\begin{thm}\label{402}(arithmetic Lefschetz-Riemann-Roch)
Let $f: X\to Y$ be an equivariant morphism between two regular $\mu_n$-projective arithmetic schemes, which is smooth over the generic fibre. Suppose that the $\mu_n$-action on the base $Y$ is trivial. Then, for any $m\geq1$, the following diagram
\begin{align*}
\xymatrix{
\widehat{K}_m(X,\mu_n) \ar[rr]^-{\Lambda_R\circ \tau} \ar[d]_{f_*} && \widehat{K}_m(X_{\mu_n},\mu_n)_\rho\otimes \Q \ar[d]^{{f_{\mu_n}}_*} \\
\widehat{K}_m(Y,\mu_n) \ar[rr]^-{\tau} && \widehat{K}_m(Y,\mu_n)_\rho\otimes\Q}
\end{align*}
where $\tau$ is the restriction map, is commutative.
\end{thm}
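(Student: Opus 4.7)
The strategy is to follow Thomason's localization approach adapted to the arithmetic setting, combining the arithmetic concentration theorem of \cite{T3}, the transitivity of direct image maps proved in Corollary~\ref{305}, and a chain-level implementation of the Bismut--Ma immersion formula via the deformation to the normal cone.

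The first move is to exploit the arithmetic concentration theorem, which asserts that $\tau\circ\otimes\lambda_{-1}^{-1}(\overline{N}_{X/X_{\mu_n}}^\vee)$ realizes an isomorphism between $\widehat{K}_m(X,\mu_n)_\rho\otimes\Q$ and $\widehat{K}_m(X_{\mu_n},\mu_n)_\rho\otimes\Q$, with inverse induced by the push-forward along the regular immersion $i_X\colon X_{\mu_n}\hookrightarrow X$. After inverting $\rho$, every class on $X$ is therefore represented by a class on the fixed locus, and since $f_{\mu_n}=f\circ i_X$, the commutativity of the diagram reduces to comparing the direct image map $f_*$, applied to $(i_X)_*$ of a class, with the composition through $\Lambda_R$ and $(f_{\mu_n})_*$. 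On the algebraic side this comparison is accounted for by the factor $\lambda_{-1}^{-1}(\overline{N}_{X/X_{\mu_n}}^\vee)$ and by Corollary~\ref{305}; the substance of the theorem is therefore concentrated in producing the secondary analytic correction $(1-R_g(N_{X/X_{\mu_n}}))$.

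To produce this correction, I will employ the equivariant deformation to the normal cone of $i_X$. Let $W$ be the blow-up of $X\times\mathbb{P}^1$ along $X_{\mu_n}\times\{\infty\}$ with the strict transform of $X\times\{\infty\}$ removed, equipped with the natural equivariant flat projection $q\colon W\to\mathbb{P}^1$. The generic fibre of $q$ is $X$, the fibre at $\infty$ is the projective completion $\mathbb{P}(\overline{N}_{X/X_{\mu_n}}\oplus\mathbf{1})$, and the section $X_{\mu_n}\times\mathbb{P}^1\hookrightarrow W$ deforms $i_X$ to the zero section of this projective bundle. Pushing a representative of a class in $\widehat{K}_m(X_{\mu_n},\mu_n)$ forward along $W\to Y\times\mathbb{P}^1\to Y$ and comparing the two specialisations at $0$ and $\infty$ yields the desired relation: at the generic point one recovers $f_*\circ(i_X)_*$, while at $\infty$ one obtains the push-forward through the projective bundle, which is computed explicitly using the projection formula and the known equivariant torsion of $\mathbb{P}(\overline{N}\oplus\mathbf{1})\to X_{\mu_n}$ from \cite{KR1, Ma1}. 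The difference of the two specialisations is controlled by the transitivity formula Theorem~\ref{301} applied to $W\to\mathbb{P}^1\to\mathrm{Spec}\,\C$, which extracts the analytic torsion across the family; this is exactly the input needed to match $R_g$.

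\textbf{Main obstacle.} The hard part is to lift the above from the level of $K_0$ (where it is essentially \cite[Theorem~4.4]{KR1}) to a chain homotopy between the chain homotopies $\Pi_k$ of Theorem~\ref{209} applied to both sides of (\ref{c5}). I expect to proceed as in Proposition~\ref{210} and Proposition~\ref{304}: construct, for each hermitian $k$-cube, auxiliary modified transgression bundles on products of projective lines encoding the deformation family $W$, integrate them against the currents $C_{k+j}(\log|z_1|^2,\ldots)$ to manufacture higher chain homotopies, and compare the resulting chain homotopy with the one defined by $\Pi_R$ composed with $\lambda_{-1}^{-1}(\overline{N}^\vee)\bullet\tau$. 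The analytic discrepancy is governed by the equivariant version of the Bismut--Ma immersion formula, whose $R$-genus contribution is absorbed at the cubical level via the homotopy-of-homotopies device of Lemma~\ref{NewCA}, exactly as was used in the proofs of Corollary~\ref{212} and Corollary~\ref{305}. The principal technical obstacle is the bookkeeping of these iterated secondary homotopies and the verification that the non-$R_g$ terms produced by the immersion formula cancel against the contributions coming from the two applications of the transgression construction.
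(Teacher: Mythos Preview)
Your overall strategy---concentration theorem to reduce to an identity $f_*\circ(i_X)_*(x)={f_{\mu_n}}_*(x)-{f_{\mu_n}}_*(R_g(N_{X/X_{\mu_n}})\cdot x)$, then deformation to the normal cone plus the Bismut--Ma immersion formula to establish that identity---is exactly the paper's approach (this identity is the paper's Theorem~\ref{407}, and the final deduction is the short computation in \S4.3). Two points of divergence are worth flagging.

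First, you remove the strict transform $\widetilde{X}$ from the deformation space. The paper keeps the full blow-up $W$ so that $W\to S$ remains proper; it then handles the component $\widetilde{X}$ at the special fibre by a separate Koszul/divisor argument (Lemma~\ref{411}, Remark~\ref{412}), using the line bundles $\overline{\mathcal{O}}(-X),\overline{\mathcal{O}}(-P),\overline{\mathcal{O}}(-\widetilde{X})$ and the fact that $j(Y\times\mathbb{P}^1)$ misses $\widetilde{X}$ to show that $\widetilde{X}$ contributes nothing. Your variant with $\widetilde{X}$ excised would need a replacement for properness of $g\colon W\to S$, which is used throughout the torsion-form machinery.

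Second, you attribute the comparison of the two specialisations to the submersion transitivity formula Theorem~\ref{301}. In the paper this comparison is done instead via the immersion formula applied to each divisor embedding $u_t\colon X\hookrightarrow W$ and $u_0\colon P\hookrightarrow W$ (Lemma~\ref{411}); the transitivity result (Corollary~\ref{305}) enters only to pass from the projective-bundle identity $x-R_g(N)\cdot x=\pi_*i_*(x)$ on $X_{\mu_n}$ (Proposition~\ref{405}) to its push-forward along $f_{\mu_n}$ (Corollary~\ref{406}). Likewise, the projective-bundle step is not obtained from a precomputed torsion of $\mathbb{P}(\overline{N}\oplus\mathbf{1})\to X_{\mu_n}$ but by applying the immersion formula directly to the zero section with a $\pi$-acyclic resolution of $i_*\mathcal{O}_Y$. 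Modulo these implementation details your plan matches the paper.
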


The proof of Theorem~\ref{402} will be given in next two subsections.

\subsection{Arithmetic K-theoretic form of Bismut-Ma's immersion formula}
Let $Y\hookrightarrow X$ be a $\mu_n$-equivariant closed immersion of regular $\mu_n$-projective arithmetic schemes over $(D,\Sigma,F_{\infty})$. In \cite[Section 4]{T3}, we have proved an arithmetic purity theorem $$\widehat{K}_m(Y,\mu_n)\cong \widehat{K}_{Y,m}(X,\mu_n)$$ for any integer $m\geq1$. As a byproduct, we get an embedding morphism $\widehat{K}_m(Y,\mu_n)\to \widehat{K}_m(X,\mu_n)$. This embedding morphism is realized by constructing an explicit chain homotopy of the square
\begin{align}\label{bcc}
\xymatrix{ \widetilde{\Z}C_*(Y,\mu_n) \ar[r]^-{{\rm ch}_g} \ar[d]^-{i_*} & \bigoplus_{p\geq0}{'D}^{2p-*}(Y_{\mu_n},p)_{R_n} \ar[d]^-{{i_{\mu_n}}_!\circ {\rm Td}_g^{-1}(\overline{N}_{X/Y})\bullet(\cdot)} \\
\widetilde{\Z}C_*(P,\mu_n) \ar[r]^-{{\rm ch}_g} & \bigoplus_{p\geq0}{'D}^{2p-*}(P_{\mu_n},p)_{R_n},}
\end{align}
where ${'D}^{2p-*}(\cdot,p)$ stands for the Deligne complex of currents computing the Deligne homology groups, $({i_{\mu_n}}_!T)(\eta)=T(i_{\mu_n}^*\eta)$ for a current $T$ and a test form $\eta$, $i: Y\hookrightarrow P:=\mathbb{P}(N_{X/Y}\oplus\mathcal{O}_Y)$ is the associated zero section embedding with projection $\pi: P\to Y$ and
$$i_*: \widetilde{\Z}C_*(Y,\mu_n)\to \widetilde{\Z}C_*(P,\mu_n)$$ is the complex morphism defined by sending a hermitian cube $\overline{E}$ to $\sum_{j=0}^n(-1)^j\overline{Q}^\vee\otimes \pi^*\overline{E}$ provided the Koszul resolution
$$K(\overline{E},\overline{N}_{X/Y}):\quad 0\to \wedge^n\overline{Q}^\vee\otimes \pi^*\overline{E} \to \cdots \to \wedge\overline{Q}^\vee\otimes \pi^*\overline{E}\to \pi^*\overline{E}\to i_*\overline{E}\to 0.$$
For any hermitian $k$-cube $\overline{E}$, one chain homotopy $\mathbf{H}_k(\overline{E})$ of (\ref{bcc}) is given by the formula
$$\mathbf{H}_k(\overline{E})=T_g\big(K(\overline{\mathcal{O}}_Y,\overline{N}_{X/Y})\big)\bullet{\rm ch}_g(\pi^*\overline{E})$$
where $T_g\big(K(\overline{\mathcal{O}}_Y,\overline{N}_{X/Y})\big)$ is the equivariant Bott-Chern singular current associated to the Koszul resolution which satisfies
$$d_\mathcal{D}T_g\big(K(\overline{\mathcal{O}}_Y,\overline{N}_{X/Y})\big)=\sum_{j=1}^n(-1)^j{\rm ch}_g(\wedge^n\overline{Q}^\vee)-{i_{\mu_n}}_!\big({\rm ch}_g(\overline{O}_Y){\rm Td}_g(\overline{N}_{X/Y})\big).$$ 
For more details the reader is referred to \cite[Section 4.2]{T3}.

It is clear that if we choose another resolution
$$0\to \overline{F}_n \to \cdots\to \overline{F}_1\to \overline{F}_0\to i_*\overline{O}_Y\to 0$$
with respect to the zero section embedding $i: Y\hookrightarrow \mathbb{P}(N_{X/Y}\oplus\mathcal{O}_Y)$ such that the metrics on $F.$ satisfy the Bismut's assumption (A), we may construct a different homotopy of (\ref{bcc}) and we shall get a different embedding morphism $i_*: \widehat{K}_m(Y,\mu_n)\to \widehat{K}_m(P,\mu_n)$. Our first result in this subsection is the following.

\begin{prop}\label{403}
The embedding morphism over rational arithmetic K-groups $$i_*: \widehat{K}_m(Y,\mu_n)_{\Q}\to \widehat{K}_m(P,\mu_n)_{\Q}$$ is independent of the choice of the resolution of $i_*\overline{\mathcal{O}}_Y$ on $\mathbb{P}(N_{X/Y}\oplus\mathcal{O}_Y)$ which satisfies the Bismut's assumption (A).
\end{prop}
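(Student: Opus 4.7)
The plan is to exhibit, given any two hermitian resolutions of $i_*\overline{\mathcal{O}}_Y$, an explicit higher chain homotopy between the corresponding chain homotopies for the square (\ref{bcc}); applying Lemma~\ref{NewCA} will then force the two resulting embedding morphisms to agree after tensoring by $\Q$.

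First, let $\overline{F}_\bullet$ and $\overline{F}'_\bullet$ be two hermitian resolutions of $i_*\overline{\mathcal{O}}_Y$ satisfying Bismut's assumption (A), with associated chain homotopies
\begin{align*}
\mathbf{H}^F_k(\overline{E}) = \frac{(-1)^{k+1}}{(k+1)!(2\pi i)^k}\int_{(\mathbb{P}^1)^k} C_{k+1}\bigl(T_g(\overline{F}_\bullet)\bullet {\rm ch}^0_g({\rm tr}_k\circ\lambda(\overline{E})),\log|z_1|^2,\ldots,\log|z_k|^2\bigr)
\end{align*}
and analogously $\mathbf{H}^{F'}_k$. By a standard double-complex argument, any two such resolutions can be compared through a third resolution mapping to both, so it suffices to treat the case where there is a fixed morphism of resolutions $\overline{F}_\bullet \to \overline{F}'_\bullet$.

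Second, I would invoke the Bismut-Ma immersion formula in its equivariant form: modulo $\mathrm{Im}\,\partial + \mathrm{Im}\,\overline{\partial}$, the difference $T_g(\overline{F}_\bullet) - T_g(\overline{F}'_\bullet)$ is the sum of a secondary equivariant Bott-Chern form $\widetilde{{\rm ch}}_g(\overline{F}_\bullet,\overline{F}'_\bullet)$ comparing the two resolutions, plus a correction term of the shape ${i_{\mu_n}}_!\bigl({\rm Td}^{-1}_g(\overline{N}_{X/Y})\bullet R_g(\overline{N}_{X/Y})\bullet {\rm ch}_g(\overline{\mathcal{O}}_Y)\bullet(\cdot)\bigr)$. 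Plugging this decomposition into $\mathbf{H}^F_k - \mathbf{H}^{F'}_k$, the secondary Bott-Chern piece is naturally the boundary of a higher cubical transgression form, built using a modified second transgression of an emi-$2$-cube analogous to $H_f^{f'}$ in Proposition~\ref{210} but now formed from the comparison $\overline{F}_\bullet \to \overline{F}'_\bullet$; this yields a higher homotopy $H^{(1)}$. The $R$-genus piece, after pushing forward through the zero section, is precisely the contribution computed by the endomorphism $\otimes R_g(\overline{N}_{X/Y})$ introduced before Theorem~\ref{402}; by the metric-independence statement there, combined with \cite[Remark 2.4, Lemma 2.5]{T3}, this contribution is naturally homotopic to zero on the rational level, producing a second higher homotopy $H^{(2)}$.

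The main obstacle will be the combinatorial bookkeeping of faces for $H^{(1)}$: one must verify that the modified second transgression bundle constructed from the short exact sequences relating $\overline{F}_\bullet$ and $\overline{F}'_\bullet$ restricts correctly to the distinguished faces $\{0\}$ and $\{\infty\}$ of the extra $\mathbb{P}^1$ factor, exactly as in the constructions preceding Proposition~\ref{210} and Proposition~\ref{304}. A further analytic subtlety is the compatibility of the Bismut-Ma formula with the trivial fibration $p_0: P\times (\mathbb{P}^1)^\bullet \to P$, which is handled by the same restriction principle used in Lemma~\ref{delta} and Remark~\ref{deltac}: the auxiliary currents and secondary forms involved in the immersion formula pull back coherently along $p_0$ and restrict correctly to the $\mathbb{P}^1$-faces. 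Once these compatibilities are in place, $H^{(1)}+H^{(2)}$ satisfies the homotopy-of-homotopies relation $(H^{(1)}+H^{(2)})d - d(H^{(1)}+H^{(2)}) = \mathbf{H}^F_k - \mathbf{H}^{F'}_k$ in the sense of Definition~\ref{homotopy}, and Lemma~\ref{NewCA} delivers the coincidence of the two embedding morphisms on $\widehat{K}_m(Y,\mu_n)_\Q$.
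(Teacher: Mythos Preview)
There are two genuine gaps in your proposal.

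\textbf{First, the wrong comparison formula.} You invoke the Bismut--Ma immersion formula (Theorem~\ref{404}) to compare $T_g(\overline{F}_\bullet)$ with $T_g(\overline{F}'_\bullet)$, and you claim an $R$-genus correction appears. But Theorem~\ref{404} relates \emph{analytic torsion forms for a submersion} to a \emph{singular current for an immersion}; it does not compare two singular currents for different resolutions of the same immersed sheaf. The relevant comparison result here is the uniqueness property of equivariant Bott--Chern singular currents: for two resolutions of $i_*\overline{\mathcal{O}}_Y$ satisfying assumption~(A), the difference $T_g(\overline{F}_\bullet)-T_g(\overline{F}'_\bullet)$ is, modulo ${\rm Im}\,d$, simply the secondary Bott--Chern form of the comparison data, with \emph{no} $R$-genus term. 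Your step ``the $R$-genus piece is naturally homotopic to zero'' is therefore fighting a phantom; and your appeal to the metric-independence of $\otimes R_g(\overline{N})$ does not help, since that statement concerns independence of the metric on $N$, not vanishing of the operator.

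\textbf{Second, you have not addressed the change in the left vertical arrow.} In the square~(\ref{bcc}) the map $i_*:\widetilde{\Z}C_*(Y,\mu_n)\to\widetilde{\Z}C_*(P,\mu_n)$ is defined by $\overline{E}\mapsto\sum_j(-1)^j\overline{F}_j\otimes\pi^*\overline{E}$, so it \emph{depends on the resolution}. To apply Lemma~\ref{NewCA} you need, in its notation, a chain homotopy $\pi_f$ between the two cube-level maps $i_*$ and $i'_*$, and then a homotopy of chain homotopies between $\mathbf{H}'_k$ and ${\rm ch}_g\circ\pi_f+\mathbf{H}_k$. Your proposal only compares $\mathbf{H}^F_k$ with $\mathbf{H}^{F'}_k$ directly and never constructs $\pi_f$. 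The paper handles this by dominating one resolution by the other, writing the kernel $\overline{A}_\bullet$ as an orthogonally split exact complex, and taking $H_i(\overline{E})=\sum_j(-1)^j\varepsilon_j\otimes\pi^*\overline{E}+\sum_j(-1)^j\chi_j\otimes\pi^*\overline{E}$ as the required cube-level homotopy; the higher homotopy $\widetilde{\mathbf{H}}_k$ is then built from the singular current of the transgressed resolution $\overline{\Xi}_\bullet$ on $P\times(\mathbb{P}^1)^{k+1}$, and the differential equation for $T_g(\overline{\Xi}_\bullet)$ (which again involves no $R$-genus) produces exactly the identity $\widetilde{\mathbf{H}}_{k-1}\circ d-d\circ\widetilde{\mathbf{H}}_k=\mathbf{H}'_k-(\mathbf{H}^{(1)}_k+\mathbf{H}_k)$.
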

\begin{proof}
Since any two resolutions of $i_*\overline{\mathcal{O}}_Y$ on $\mathbb{P}(N_{X/Y}\oplus\mathcal{O}_Y)$ are dominated by a third one, we may assume that $\overline{F}.$ and $\wedge^.\overline{Q}^\vee$ fit into the following diagram
$$\xymatrix{ & 0 \ar[d] & 0 \ar[d] & 0 \ar[d] & \\
0 \ar[r] & \overline{A}_n \ar[r] \ar[d] & \overline{F}_n \ar[r] \ar[d] & \wedge^n\overline{Q}^\vee \ar[r] \ar[d] & 0\\
 & \vdots \ar[d] & \vdots \ar[d] & \vdots \ar[d] & \\
0 \ar[r] & \overline{A}_1 \ar[r] \ar[d] & \overline{F}_1 \ar[r] \ar[d] & \wedge\overline{Q}^\vee \ar[r] \ar[d] & 0\\
0 \ar[r] & \overline{A}_0 \ar[r] \ar[d] & \overline{F}_0 \ar[r] \ar[d] & \overline{\mathcal{O}}_P \ar[r] \ar[d] & 0\\
 & 0 \ar[r]  & i_*\overline{\mathcal{O}}_Y \ar[r]  &  i_*\overline{\mathcal{O}}_Y & }$$
where $\overline{A}.$ is an exact sequence of hermitian vector bundles on $P$. We endow $A.$ with the metrics coming form $\overline{F}.$ via the natural inclusion. We split $\overline{A}.$ into a family of short exact sequence of hermitian bundles from $j=1$ to $n-1$
$$\xymatrix{\chi_j: \quad 0 \ar[r] & \Ker d_j \ar[r] & \overline{A}_j \ar[r]^-{d_j} & \Ker d_{j-1} \ar[r] & 0}.$$
Moreover, we denote by $\varepsilon_j$ the short exact sequence $$\xymatrix{0 \ar[r] & \overline{A}_j \ar[r] & \overline{F}_j \ar[r] & \wedge^j\overline{Q}^\vee \ar[r] & 0}$$ from $j=0$ to $n$. Write $i_*$ (resp. $i'_*$) for the morphism $\widetilde{\Z}C_*(Y,\mu_n)\to \widetilde{\Z}C_*(P,\mu_n)$ with respect to the Koszul resolution $K(\overline{\mathcal{O}}_Y,\overline{N}_{X/Y})$ (resp. the resolution $\overline{F}.$). Then, for any hermitian $k$-cube $\overline{E}$ on $Y$, the assignment $$H_i(\overline{E}):=\sum_{j=0}^n(-1)^j\varepsilon_j\otimes \pi^*\overline{E}+\sum_{j=1}^{n-1}(-1)^j\chi_j\otimes\pi^*\overline{E}\in \widetilde{\Z}C_{k+1}(P,\mu_n)$$ provides a chain homotopy between $i'_*$ and $i_*$. Consequently, the formula
$$\mathbf{H}^{(1)}_k(\overline{E})=\big(\sum_{j=0}^n(-1)^j{\rm ch}_g(\varepsilon_j)+\sum_{j=1}^{n-1}(-1)^j{\rm ch}_g(\chi_j)\big){\rm ch}_g(\pi^*\overline{E})$$ defines a chain homotopy between ${\rm ch}_g\circ i'_*$ and ${\rm ch}_g\circ i_*$. We claim that there exists a homotopy of chain homotopies between $\mathbf{H}'_k(\overline{E})$ and $\mathbf{H}^{(1)}_k(\overline{E})+\mathbf{H}_k(\overline{E})$. 

In fact, according to \cite[Theorem 3.14, Corollary 3.10]{KR1}, we have 
$$-\sum_{j=1}^{n-1}(-1)^j{\rm ch}_g(\chi_j)+T_g(\overline{F}.)-T_g\big(K(\overline{\mathcal{O}}_Y,\overline{N}_{X/Y})\big)=\sum_{j=0}^n(-1)^j{\rm ch}_g(\varepsilon_j)$$ 
up to ${\Im d_\mathcal{D}}$. We fix an element $\Delta$ such that 
$$d_\mathcal{D}\Delta=\sum_{j=0}^n(-1)^j{\rm ch}_g(\varepsilon_j)+\sum_{j=1}^{n-1}(-1)^j{\rm ch}_g(\chi_j)-T_g(\overline{F}.)+T_g\big(K(\overline{\mathcal{O}}_Y,\overline{N}_{X/Y})\big)$$ and set 
$$\widetilde{\mathbf{H}}_k(\overline{E}):=\Delta\bullet{\rm ch}_g(\pi^*\overline{E}).$$
Then 
$$d_\mathcal{D}\circ \widetilde{\mathbf{H}}_k(\overline{E})=\mathbf{H}^{(1)}_k(\overline{E})+\mathbf{H}_k(\overline{E})-\mathbf{H}'_k(\overline{E})+\widetilde{\mathbf{H}}_{k-1}\circ d(\overline{E}).$$
So we are done.
\end{proof}

Notice that the product $P\times (\mathbb{P}^1)^\cdot$ can be identified with the projective space bundle over $Y\times (\mathbb{P}^1)^\cdot$ with respect to the vector bundle $p_0^*N_{X/Y}$, and
$$0\to p_0^*\wedge^n\overline{Q}^\vee \to \cdots \to p_0^*\wedge\overline{Q}^\vee\to \overline{\mathcal{O}}_{P\times (\mathbb{P}^1)^\cdot}\to i_*\overline{\mathcal{O}}_{Y\times (\mathbb{P}^1)^\cdot}\to 0$$
is the Koszul resolution so that the corresponding Bott-Chern singular current is the pullback $p_0^*T_g\big(K(\overline{\mathcal{O}}_Y,\overline{N}_{X/Y})\big)$. We shall still write it as $T_g(K(\overline{\mathcal{O}}_Y,\overline{N}_{X/Y})$ for the sake of simplicity. Then, like before, by the projection formula and \cite[Remark 2.4, Lemma 2.5]{T3}, $\mathbf{H}_k(\overline{E})$ is homotopic to the following chain homotopy
$$\frac{(-1)^{k+1}}{(k+1)!(2\pi i)^k}\int_{(\mathbb{P}^1)^k}C_{k+1}\Big(T_g\big(K(\overline{\mathcal{O}}_Y,\overline{N}_{X/Y})\big)\bullet {\rm ch}_g^0\big({\rm tr}_k\circ \lambda(\overline{E})\big),\log\mid z_1\mid^2,\cdots,\log\mid z_{k}\mid^2\Big),$$
which will be still denoted by $\mathbf{H}_k(\overline{E})$.

Now, let us recall the Bismut-Ma's immersion formula which relates analytic torsion forms and the Bott-Chern singular current. Let $X$ be a smooth $\mu_n$-equivariant algebraic variety over $\C$ and let $i: Y\hookrightarrow X$ be an equivariant closed smooth subvariety. Let $S$ be a smooth algebraic variety with trivial $\mu_n$-action, and let $f: Y\rightarrow S$, $l: X\rightarrow S$ be two equivariant proper smooth morphisms such that $f=l\circ i$. Assume that $\overline{\eta}$ is an equivariant hermitian bundle on $Y$ and $\overline{\xi}.$ is a complex of equivariant hermitian bundles on $X$ which provides a resolution of $i_*\overline{\eta}$ such that the metrics on $\xi.$ satisfy the Bismut's assumption (A). Let $\omega^Y$, $\omega^X$ be two K\"{a}hler fibrations on $f$ and on $l$ respectively. We shall assume that $\omega^Y$ is the pull-back of
$\omega^X$ so that the K\"{a}hler metric on $Y$ is induced by the K\"{a}hler metric on $X$. Consider the following exact sequence
$$\overline{\mathcal{N}}:\quad 0\to \overline{Tf}\to
\overline{Tl}\mid_Y\to \overline{N}_{X/Y}\to 0$$
where $N_{X/Y}$ is endowed with the quotient metric. Denote by ${\rm Td}_g(\overline{\mathcal{N}})=\Phi^{-1}\big(\frac{\widetilde{{\rm Td}}_g(\overline{\mathcal{N}})}{2}\big)$ (see Section 5.3 in the Appendix) the equivariant secondary Todd form of $\overline{\mathcal{N}}$ which satisfies the identity
$$d_\mathcal{D}{\rm Td}_g(\overline{\mathcal{N}})={\rm Td}_g(Tl\mid_Y,h^{Tl})-{\rm Td}_g(Tf,h^{Tf}){\rm Td}_g(\overline{N}_{X/Y}).$$

We suppose that in the resolution $\xi.$,
$\xi_j$ are all $l-$acyclic and moreover $\eta$ is $f-$acyclic. Denote by $h^{H(\xi.)}$ the hermitian metric on $f_*\eta$ corresponding to the $L^2$-metric on the hypercohomology of $\xi.$ over the fibre of $l: X\rightarrow S$ (see Section 5.3 in the Appendix). By an easy argument of long exact sequence, we have the following
exact sequence of hermitian vector bundles on $S$
$$\overline{\Xi}:\quad 0\to l_*(\overline{\xi}_m)\to l_*(\overline{\xi}_{m-1})\to\ldots\to l_*(\overline{\xi}_0)\to \big(f_*\eta, h^{H(\xi.)}\big) \to 0.$$
We may split $\Xi.$ into a family of short exact sequence of hermitian bundles from $j=1$ to $m$
$$\xymatrix{\chi_j: \quad 0 \ar[r] & \Ker d_j \ar[r] & \overline{\Xi}_j \ar[r]^-{d_j} & \Ker d_{j-1} \ar[r] & 0}$$
such that the kernel of every map $d_{j-1}$ for $j=2,\ldots,m$ carries the metric induced by $\overline{\Xi}_j$ and $\Ker d_0=\overline{\Xi}_0=\big(f_*\eta, h^{H(\xi.)}\big), \Ker d_m=\overline{\Xi}_{m+1}=l_*(\overline{\xi}_m)$. We regard $\chi_j$ as a hermitian $1$-cube on $S$ and we set ${\rm ch}_g(\overline{\Xi}.)=\sum_{j=1}^m(-1)^j{\rm ch}_g(\chi_j)$. Then it satisfies the differential equation
$$d_\mathcal{D}{\rm ch}_g(\overline{\Xi}.)={\rm ch}_g\big(f_*\eta, h^{H(\xi.)}\big)-\sum_{j=0}^m{\rm ch}_g\big(l_*(\overline{\xi}_j)\big).$$
Set ${\rm ch}_g(\overline{\Xi}., f_*\overline{\eta}):={\rm ch}_g(\overline{\Xi}.)+{\rm ch}_g\big(f_*\eta, h^{H(\xi.)}, f_*h^\eta\big)$, it satisfies the differential equation
$$d_\mathcal{D}{\rm ch}_g(\overline{\Xi}., f_*\overline{\eta})={\rm ch}_g(f_*\overline{\eta})-\sum_{j=0}^m{\rm ch}_g\big(l_*(\overline{\xi}_j)\big).$$

With some abuse of notations, we still use $\overline{\Xi}$ to denote the long exact sequence
$$0\to l_*(\overline{\xi}_m)\to l_*(\overline{\xi}_{m-1})\to\ldots\to l_*(\overline{\xi}_0)\to f_*\overline{\eta} \to 0$$
and identify ${\rm ch}_g(\overline{\Xi}.)$ with ${\rm ch}_g(\overline{\Xi}., f_*\overline{\eta})$.

\begin{thm}\label{404}(Immersion formula)
Let notations and assumptions be as above. Then the following identity holds in $\bigoplus_{p\geq0}\big(D^{2p-1}(S,p)/{\Im d_\mathcal{D}}\big)$.
\begin{multline*}
\sum_{i=0}^m(-1)^iT_g(\omega^X,h^{\xi_i})-T_g(\omega^Y,h^\eta)+{\rm
ch}_g(\overline{\Xi}.)\\=-\frac{1}{(2\pi i)^{r_l}}\int_{X_{\mu_n}/S}{\rm
Td}_g(\overline{Tl})T_g(\overline{\xi}.)-\frac{1}{(2\pi i)^{r_f}}\int_{Y_{\mu_n}/S}{\rm
Td}_g(\overline{\mathcal{N}}){{\rm
Td}_g^{-1}(\overline{N}_{X/Y})}{\rm ch}_g(\overline{\eta})\hfill\\
+\frac{1}{(2\pi i)^{r_f}}\int_{Y_{\mu_n}/S}{\rm
Td}_g(\overline{Tf})R_g(\overline{N}_{X/Y}){\rm ch}_g(\overline{\eta})
\end{multline*}
where $r_f$ and $r_l$ are the relative dimensions of $Y_{\mu_n}/S$ and of $X_{\mu_n}/S$ respectively.
\end{thm}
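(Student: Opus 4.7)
The plan is to derive this identity as the direct equivariant analogue of Bismut--Ma's immersion formula, following the same template used in the proof of Theorem~\ref{301} where X. Ma's non-equivariant identity was upgraded to the equivariant setting by systematically inserting the fixed generator $g\in\mu_n(\C)$ into the relevant supertraces. All the geometric data ($\omega^X$, $\omega^Y$, the metrics on $\xi_\bullet$ and on $\eta$) can be chosen $\mu_n$-invariantly, and the K\"ahler fibration $\omega^Y$ is by assumption the pull-back of $\omega^X$, so such an insertion is compatible with the localization of all relevant heat kernels near the fixed-point loci.

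First I would introduce, following Bismut--Ma, a one-parameter family of rescaled Quillen superconnections $A_t$ ($t>0$) on the fibration $l: X\to S$ built from the resolution $\overline{\xi}.$ together with the relative metrics. Integrating the derivative in $t$ of the equivariant supertrace $\operatorname{Tr}_s\bigl[g\cdot\exp(-A_t^{2})\bigr]$, twisted by the appropriate number operator, against $dt/t$ over $(0,\infty)$ produces boundary contributions at $t\to 0$ and $t\to \infty$ whose identification yields the stated identity. The $t\to 0$ limit produces the local family index contribution $\frac{1}{(2\pi i)^{r_l}}\int_{X_{\mu_n}/S}{\rm Td}_g(\overline{Tl})\,{\rm ch}_g(\overline{\xi}.)$ via the equivariant local index theorem in the form of \cite{Ma1}; while the $t\to\infty$ limit yields the combination $\sum_i(-1)^iT_g(\omega^X,h^{\xi_i})-T_g(\omega^Y,h^\eta)+{\rm ch}_g(\overline{\Xi}.)$, because under Bismut's assumption (A) the limiting superconnection concentrates on $\overline{\eta}$ on $Y$ and the cokernel terms assemble into the comparison sequence $\overline{\Xi}.$.

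The two key analytic ingredients are: (i) the double localization of the heat operator near the nested fixed-point locus $Y_{\mu_n}\subset X_{\mu_n}$, which after a Clifford rescaling in the directions normal to $Y$ in $X$ produces, via the equivariant local index theorem, the integrand $\frac{1}{(2\pi i)^{r_f}}{\rm Td}_g(\overline{\mathcal{N}})\,{\rm Td}_g^{-1}(\overline{N}_{X/Y})\,{\rm ch}_g(\overline{\eta})$ along $Y_{\mu_n}/S$; and (ii) the emergence of the equivariant $R$-genus $R_g(\overline{N}_{X/Y})$ as the finite part in the Mellin transform of the small-time asymptotic expansion of the corresponding supertrace in the directions normal to $Y_{\mu_n}$ inside $X_{\mu_n}$, exactly as in Bismut's original computation.

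The main obstacle, as in Theorem~\ref{301}, will be verifying that every intermediate estimate in the Bismut--Ma asymptotic analysis is local on a neighborhood of $l^{-1}(Y_{\mu_n})$ and is compatible with the Clifford rescalings in the normal directions, so that inserting $g$ merely restricts the supertraces to the fixed-point contributions without disturbing the functional-analytic estimates. Once this is established, the identification of the boundary contributions modulo $\operatorname{Im}d$ follows from the uniqueness of secondary characteristic classes in the style of \cite{BGS} together with the equivariant modifications already isolated in \cite[Sections 7--9]{Ma1}. As in the proof of Theorem~\ref{301}, the detailed estimates parallel the non-equivariant Bismut--Ma argument, and would be left to the interested reader.
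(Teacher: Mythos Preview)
Your sketch outlines the correct analytic strategy, but there is a misconception in the framing. You treat Bismut--Ma's immersion formula as a non-equivariant result that must be ``upgraded'' by inserting $g$ into the supertraces, in analogy with the proof of Theorem~\ref{301}. In fact the reference \cite{BM} is already the equivariant version: its title is \emph{Holomorphic immersions and equivariant torsion forms}, and Theorems~0.1 and~0.2 there are stated and proved for a compact group action. The paper's proof is accordingly a one-line citation of \cite[Theorem~0.1 and~0.2]{BM}; no additional equivariant extension is required, unlike the situation in Theorem~\ref{301} where \cite{Ma2} was genuinely non-equivariant.

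What you have written is essentially a sketch of the strategy that Bismut and Ma themselves carry out in \cite{BM}: the Quillen superconnection on the resolution $\overline{\xi}.$, the $t\to 0$ local index contribution, the $t\to\infty$ concentration on $Y$ under assumption~(A), and the appearance of $R_g(\overline{N}_{X/Y})$ from the Mellin-transform finite part. So your argument is not wrong in spirit, but it reproduces a substantial body of hard analysis that is already in the literature in exactly the form needed. The appropriate proof here is simply to invoke \cite{BM} directly.
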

\begin{proof}
This is a translation of \cite[Theorem 0.1 and 0.2]{BM} (see also Theorem~\ref{A30} in the Appendix). 
\end{proof}

With the same notations as in Remark~\ref{explain_tt} and Theorem~\ref{A3} in the Appendix, we set 
$$\Delta(f,l,i_*\overline{\eta},\overline{\xi}.):=-\Phi^{-1}\big(\frac{\Delta^0(f,l,i_*\overline{\eta},\overline{\xi}.)+\Delta_0(f,l,i_*\overline{\eta},\overline{\xi}.)}{2}\big)-\Delta(\overline{\Xi}.).$$ 
Then $d_\mathcal{D}\Delta(f,l,i_*\overline{\eta},\overline{\xi}.)$ measures the difference
\begin{multline*}
\sum_{i=0}^m(-1)^iT_g(\omega^X,h^{\xi_i})-T_g(\omega^Y,h^\eta)+{\rm
ch}_g(\overline{\Xi}.)+\frac{1}{(2\pi i)^{r_l}}\int_{X_{\mu_n}/S}{\rm
Td}_g(\overline{Tl})T_g(\overline{\xi}.)\\
+\frac{1}{(2\pi i)^{r_f}}\int_{Y_{\mu_n}/S}{\rm
Td}_g(\overline{\mathcal{N}}){{\rm
Td}_g^{-1}(\overline{N}_{X/Y})}{\rm ch}_g(\overline{\eta})-\frac{1}{(2\pi i)^{r_f}}\int_{Y_{\mu_n}/S}{\rm
Td}_g(\overline{Tf})R_g(\overline{N}_{X/Y}){\rm ch}_g(\overline{\eta})
\end{multline*}
in Theorem~\ref{404}. Let us go back to the same situation described before Lemma~\ref{delta} and assume that the following diagrams
$$\xymatrix{Y\times Z \ar[rd]_-{f_Z} \ar[rr]^-{i_Z} && X\times Z \ar[ld]^-{l_Z} \\ & S\times Z & }\quad \text{and}\quad
\xymatrix{Y\times Z_1 \ar[rd]_-{f_{Z_1}} \ar[rr]^-{i_{Z_1}} && X\times Z_1 \ar[ld]^-{l_{Z_1}} \\ & S\times Z_1 & }$$
are obtained by smooth base changes. Then $Y\times Z$ and $X\times Z_1$ intersect transversely along $Y\times Z_1$ and the singular currents can be pulled back.

\begin{lem}\label{deltai}
The restriction of $\Delta(f_Z,l_Z,{i_{Z}}_*\overline{\eta},\overline{\xi}.)$ over $S\times Z_1$ is equal to the differential form $\Delta(f_{Z_1},l_{Z_1},{i_{Z_1}}_*\overline{\eta}\mid_{Y\times Z_1},\overline{\xi}.\mid_{X\times Z_1})$.
\end{lem}
\begin{proof}
This is a consequence of Theorem~\ref{A3} in the Appendix.
\end{proof}

\begin{prop}\label{405}
Let $Y$ be a regular $\mu_n$-projective arithmetic scheme over $(D,\Sigma,F_\infty)$ and let $\overline{N}$ be a $\mu_n$-equivariant hermitian vector bundle on $Y$. Suppose that the $\mu_n$-action on $Y$ is trivial and consider the zero section embedding $$i: Y\hookrightarrow P:=\mathbb{P}(N\oplus \mathcal{O}_Y)$$ with hermitian normal bundle $\overline{N}$ and the natural projection $\pi: P\to Y$. Then for any element $x\in \widehat{K}_m(Y,\mu_n)_\Q$ with integer $m\geq1$, the following identity
$$x-R_g(N)\cdot x=\pi_*i_*(x)$$ holds in $\widehat{K}_m(Y,\mu_n)_\Q$.
\end{prop}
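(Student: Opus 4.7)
The strategy is to apply the Bismut--Ma immersion formula (Theorem~\ref{404}) to the triangle $Y\xrightarrow{i}P\xrightarrow{\pi}Y$, taking $S=Y$, $f={\rm id}_Y$ and $l=\pi$. Since $\pi\circ i={\rm id}_Y$, algebraically $\pi_*\circ i_*={\rm id}$, and the content of the proposition is that on the \emph{arithmetic} $K$-groups the correction between $\pi_*\circ i_*$ and ${\rm id}$ is precisely $-R_g(N)\cdot(\cdot)$. Note that the transitivity statement Corollary~\ref{305} does not apply directly, since the closed immersion $i$ is not smooth over the generic fibre; the composition must therefore be assembled by hand from the two chain homotopies that respectively define $\pi_*$ and $i_*$.

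First I would write down, in the spirit of Section~3.2 and Proposition~\ref{403}, an explicit chain homotopy representing $\pi_*\circ i_*$ at the level of morphisms of complexes $\widetilde{\Z}C_*(Y,\mu_n)\to\bigoplus_{p\geq 0}D^{2p-*}(Y,p)_{R_n}$, obtained by composing the chain homotopy $\mathbf{H}^i_k$ used in Section~4.2 (which involves the Bott--Chern singular current $T_g(K(\overline{\mathcal{O}}_Y,\overline{N}))$ of the Koszul resolution) with the analytic-torsion chain homotopy $\Pi^\pi_k$ for $\pi_*$ provided by Theorem~\ref{209}. In parallel, the identity map on $\widehat{S}(Y,\mu_n)$ carries the zero chain homotopy, and (as recorded just before Theorem~\ref{402}) the endomorphism $\otimes R_g(N)$ is represented up to natural homotopy by $-R_g(\overline{N})\bullet{\rm ch}_g(\cdot)$. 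The goal is then to exhibit an explicit homotopy between the difference of these two chain homotopies and $\Pi_R$.

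Next, applying Theorem~\ref{404} with $\overline{\eta}={\rm tr}_k\circ\lambda(\overline{E})$ and $\overline{\xi}.=K(\pi^*\overline{\eta},\overline{N})$, several simplifications occur: $T_g(\omega^Y,h^\eta)=0$ because $f={\rm id}$, the relative dimension $r_f=0$ and ${\rm Td}_g(\overline{Tf})=1$, and choosing the metric on $\overline{T\pi}\mid_Y$ induced by the canonical isomorphism $\overline{T\pi}\mid_Y\cong\overline{N}$ makes ${\rm Td}_g(\overline{\mathcal{N}}){\rm Td}_g^{-1}(\overline{N})$ vanish modulo ${\rm Im}\,d$. Integrating the remaining terms of the immersion formula against $C_k(\log|z_1|^2,\ldots,\log|z_k|^2)$ over $(\mathbb{P}^1)^k$ produces precisely the differential-form identity needed between the composite chain homotopy for $\pi_*\circ i_*$ and $R_g(\overline{N})\bullet{\rm ch}_g(\overline{E})$, up to boundary terms.

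The hard step is upgrading this pointwise differential-form identity to an actual homotopy of chain homotopies in the sense of Definition~\ref{homotopy}, so that Lemma~\ref{NewCA} applies. Following the template of Propositions~\ref{210} and~\ref{304}, I would construct auxiliary second-transgression bundles attached to the Koszul resolution together with a $\Delta_k$-term built from Ma's discrepancy form $\Delta(f,l,i_*\overline{\eta},\overline{\xi}.)$ in the immersion formula, whose compatibility under smooth base change by $(\mathbb{P}^1)^k$ is ensured by Remark~\ref{deltai}. Once this homotopy of chain homotopies is in place, combining it with Remark~\ref{206}(iv) yields the claimed equality $\pi_*i_*(x)=x-R_g(N)\cdot x$ in $\widehat{K}_m(Y,\mu_n)_\Q$ for every $m\geq 1$.
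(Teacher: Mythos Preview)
Your proposal is correct and follows essentially the same strategy as the paper: both apply the Bismut--Ma immersion formula (Theorem~\ref{404}) to the triangle $Y\hookrightarrow P\to Y$ with $f={\rm id}_Y$, $l=\pi$, and then upgrade the resulting form-level identity to a homotopy of chain homotopies via second-transgression bundles together with a $\Delta_k$-term built from the discrepancy form (Remark~\ref{deltai}), so that Lemma~\ref{NewCA} applies.

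Two small remarks. First, the paper invokes Proposition~\ref{403} to replace the Koszul resolution by a resolution $\overline{F}_\cdot$ with $\pi$-acyclic terms before applying Theorem~\ref{404}; your direct use of the Koszul resolution is fine since the terms $\wedge^jQ^\vee\otimes\pi^*\overline{\eta}$ are already $\pi$-acyclic, but you should state this. Second, your description of the composite homotopy for $\pi_*\circ i_*$ omits the piece the paper calls $\mathbf{H}^{(1)}_k$, which records that $\pi_*i_*(\overline{E})$ and $\overline{E}$ differ as \emph{hermitian} cubes (the $L^2$-metric on $\pi_*\overline{\mathcal{O}}_P$ is not the trivial metric on $\mathcal{O}_Y$); this term is exactly what matches the ${\rm ch}_g(\overline{\Xi}_\cdot)$ contribution in the immersion formula, and without it the bookkeeping in your final paragraph will not close up. The paper handles this by introducing the $2$-cubes $H^{(j)}(\overline{E})$ and the auxiliary homotopy $\widetilde{\mathbf{H}}_k$; you should make the analogous construction explicit.
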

\begin{proof}
By the definition of the action of $R_g(N)$ on $\widehat{K}_m(Y,\mu_n)_\Q$, the map $x\mapsto x-R_g(N)\cdot x$ is defined via the chain homotopy
$$\Pi^{(0)}_k(\overline{E})=\frac{(-1)^{k+1}}{(k+1)!(2\pi i)^k}\int_{(\mathbb{P}^1)^k}C_{k+1}\Big(R_g(\overline{N})\bullet {\rm ch}_g^0\big({\rm tr}_k\circ \lambda(\overline{E})\big),\log\mid z_1\mid^2,\cdots,\log\mid z_{k}\mid^2\Big)$$ of the square
\begin{align*}
\xymatrix{ \widetilde{\Z}C_*(Y,\mu_n) \ar[r]^-{{\rm ch}_g} \ar[d]^-{{\rm Id}} & \bigoplus_{p\geq0}{D}^{2p-*}(Y_{\mu_n},p)_{R_n} \ar[d]^-{{\rm Id}} \\
\widetilde{\Z}C_*(Y,\mu_n) \ar[r]^-{{\rm ch}_g} & \bigoplus_{p\geq0}{D}^{2p-*}(Y_{\mu_n},p)_{R_n}.}
\end{align*}

According to Proposition~\ref{403}, to define the morphism $i_*: \widehat{K}_m(Y,\mu_n)_\Q\to \widehat{K}_m(P,\mu_n)_\Q$, we may choose a resolution $F.$ of $i_*\mathcal{O}_Y$ on $P$ such that every $F_j$ is $\pi$-acyclic. We shall endow $F.$ with the metrics satisfying the Bismut's assumption (A). Then we have an exact sequence of hermitian bundles on $Y$
$$\overline{\Xi}:\quad 0\to \pi_*(\overline{F}_m)\to \pi_*(\overline{F}_{m-1})\to\ldots\to \pi_*(\overline{F}_0)\to \overline{\mathcal{O}}_Y\to 0.$$
Like before, splitting $\overline{\Xi}$ into a family of short exact sequence of hermitian bundles from $j=1$ to $m$
$$\xymatrix{\chi_j: \quad 0 \ar[r] & \Ker d_j \ar[r] & \overline{\Xi}_j \ar[r]^-{d_j} & \Ker d_{j-1} \ar[r] & 0,}$$
we may construct a chain homotopy
$$H_{\pi\circ i}(\overline{E}):=\sum_{j=1}^{m}(-1)^j\chi_j\otimes\overline{E}\in \widetilde{\Z}C_{k+1}(Y,\mu_n)$$
between the maps ${\rm Id}$ and $\pi_*\circ i_*: \widetilde{\Z}C_{*}(Y,\mu_n)\to \widetilde{\Z}C_{*}(Y,\mu_n)$. Consequently, the formula
$$\mathbf{H}^{(1)}_k(\overline{E})=\frac{(-1)^{k+1}}{2(k+1)!(2\pi i)^{k+1}}\int_{(\mathbb{P}^1)^{k+1}}{\rm ch}_g^0\Big({\rm tr}_{k+1}\circ\lambda\big(H_{\pi\circ i}(\overline{E})\big)\Big)\wedge C_{k+1}(\log\mid z_1\mid^2,\cdots,\log\mid z_{k+1}\mid^2)$$ defines a chain homotopy between ${\rm ch}_g\circ {\rm Id}$ and ${\rm ch}_g\circ \pi_*\circ i_*$. Then $\mathbf{H}^{(1)}_k+\Pi_k^\pi\circ i_*+{\pi_{\mu_n}}_*\circ \big({\rm Td}_g(\overline{T\pi})\bullet \mathbf{H}_k\big)$ also defines a chain homotopy between ${\rm ch}_g\circ {\rm Id}$ and ${\rm Id}\circ {\rm ch}_g$.
We compare it with $\Pi^{(0)}_k$.

Firstly, denote by ${\rm Pr}_P$ (resp. ${\rm Pr}_Y$) the projection from $P\times (\mathbb{P}^1)^k$ (resp. $Y\times (\mathbb{P}^1)^k$) to $P$ (resp. $Y$). Then, according to the functoriality of projective space bundle construction we have used before, ${\rm Pr}_P^*\overline{F}.$ provides a resolution of $i_*\overline{\mathcal{O}}_{Y\times (\mathbb{P}^1)^k}$ on $P\times (\mathbb{P}^1)^k$. Hence we have an exact sequence
$$\overline{\Xi}':\quad 0\to \pi_*({\rm Pr}_P^*\overline{F}_m)\to \pi_*({\rm Pr}_P^*\overline{F}_{m-1})\to\ldots\to \pi_*({\rm Pr}_P^*\overline{F}_0)\to \overline{\mathcal{O}}_{Y\times (\mathbb{P}^1)^k}\to 0$$
which can be split into a family of short exact sequence of hermitian bundles from $j=1$ to $m$
$$\xymatrix{\chi'_j: \quad 0 \ar[r] & \Ker d_j \ar[r] & \overline{\Xi}'_j \ar[r]^-{d_j} & \Ker d_{j-1} \ar[r] & 0.}$$
Furthermore, the short exact sequence of hermitian $1$-cube
$$H^{(j)}(\overline{E}): \xymatrix{0 \ar[r] & \chi'_j\otimes {\rm tr}_k\circ \lambda(\overline{E}) \ar[r]^-{\rm Id} & {\rm Pr}_Y^*\chi_j\otimes {\rm tr}_k\circ \lambda(\overline{E}) \ar[r] & 0 \ar[r] & 0}$$
forms a hermitian $2$-cube on $Y\times (\mathbb{P}^1)^k$. We set
\begin{align*}
\widetilde{\mathbf{H}}_k(\overline{E}):=\frac{(-1)^{k+2}}{2(k+2)!(2\pi i)^{k+2}}&\int_{(\mathbb{P}^1)^{k+2}}{\rm ch}_g^0\Big(\sum_{j=1}^m(-1)^j{\rm tr}_2\circ \lambda\big(H^{(j)}(\overline{E})\big)\Big)\wedge C_{k+2}(\log\mid z_1\mid^2,\cdots,\log\mid z_{k+2}\mid^2),
\end{align*}
it satisfies the differential equation
\begin{multline*}
d_\mathcal{D}\circ \widetilde{\mathbf{H}}_k(\overline{E})=\widetilde{\mathbf{H}}_{k-1}\circ d\overline{E}+\sum_{j=0}^m(-1)^j\Pi'^\pi_k(\overline{F}_j\otimes\pi^*\overline{E})\\
+\frac{(-1)^{k+1}}{2(k+1)!(2\pi i)^{k+1}}\int_{(\mathbb{P}^1)^{k+1}}{\rm ch}_g^0\Big({\rm tr}_{k+1}\circ\lambda\big(H_{\pi\circ i}(\overline{E})\big)\Big)\wedge C_{k+1}(\log\mid z_1\mid^2,\cdots,\log\mid z_{k+1}\mid^2)\\
-\frac{(-1)^{k+1}}{2(k+1)!(2\pi i)^{k+1}}\int_{(\mathbb{P}^1)^{k+1}}{\rm ch}_g^0\big(\sum_{j=1}^m(-1)^j{\rm tr}_1\circ\lambda(\chi'_j)\boxtimes {\rm tr}_k\circ \lambda(\overline{E})\big)\wedge C_{k+1}(\log\mid z_1\mid^2,\cdots,\log\mid z_{k+1}\mid^2)\hfill\\
=\widetilde{\mathbf{H}}_{k-1}\circ d\overline{E}+\mathbf{H}^{(1)}_k(\overline{E})+\Pi'^\pi_k\circ i_*(\overline{E})\hfill\\
-\frac{(-1)^{k+1}}{(k+1)!(2\pi i)^{k}}\int_{(\mathbb{P}^1)^{k}}C_{k+1}\Big({\rm ch}_g\big(\overline{\Xi}'\otimes {\rm tr}_k\circ \lambda(\overline{E})\big),\log\mid z_1\mid^2,\cdots,\log\mid z_{k}\mid^2\Big).
\end{multline*}
On the other hand, we apply the immersion formula to the resolution ${\rm Pr}_P^*\overline{F}.\otimes {\rm tr}_k\circ \lambda(\overline{E})$. We then have
\begin{multline*}
\Pi''^\pi_k\circ i_*(\overline{E})=-\frac{(-1)^{k+1}}{(k+1)!(2\pi i)^{k}}\int_{(\mathbb{P}^1)^{k}}C_{k+1}\Big({\rm ch}_g\big(\overline{\Xi}'\otimes {\rm tr}_k\circ \lambda(\overline{E})\big),\log\mid z_1\mid^2,\cdots,\log\mid z_{k}\mid^2\Big)\\
-\frac{(-1)^{k+1}}{(k+1)!(2\pi i)^{k}}\int_{(\mathbb{P}^1)^{k}}C_{k+1}\Big(\frac{1}{(2\pi i)^{r_\pi}}\int_{P_{\mu_n}/Y}{\rm
Td}_g(\overline{T\pi})T_g\big({\rm Pr}_P^*\overline{F}.\otimes {\rm tr}_k\circ \lambda(\overline{E})\big), \log\mid z_1\mid^2,\cdots,\log\mid z_{k}\mid^2\Big)\hfill\\
+\frac{(-1)^{k+1}}{(k+1)!(2\pi i)^k}\int_{(\mathbb{P}^1)^k}C_{k+1}\Big(R_g(\overline{N})\bullet {\rm ch}_g^0\big({\rm tr}_k\circ \lambda(\overline{E})\big),\log\mid z_1\mid^2,\cdots,\log\mid z_{k}\mid^2\Big)\\
+\frac{(-1)^{k+1}}{(k+1)!(2\pi i)^k}\int_{(\mathbb{P}^1)^k}C_{k+1}\Big(d_\mathcal{D}\Delta\big({\rm tr}_k\circ \lambda(\overline{E})\big),\log\mid z_1\mid^2,\cdots,\log\mid z_{k}\mid^2\Big)\\
=-\frac{(-1)^{k+1}}{(k+1)!(2\pi i)^{k}}\int_{(\mathbb{P}^1)^{k}}C_{k+1}\Big({\rm ch}_g\big(\overline{\Xi}'\otimes {\rm tr}_k\circ \lambda(\overline{E})\big),\log\mid z_1\mid^2,\cdots,\log\mid z_{k}\mid^2\Big)-{\pi_{\mu_n}}_*\circ \big({\rm Td}_g(\overline{T\pi})\bullet \mathbf{H}_k(\overline{E})\big)\hfill\\
+\Pi^{(0)}_k(\overline{E})+\frac{(-1)^{k+1}}{(k+1)!(2\pi i)^k}\int_{(\mathbb{P}^1)^k}C_{k+1}\Big(d_\mathcal{D}\Delta\big({\rm tr}_k\circ \lambda(\overline{E})\big),\log\mid z_1\mid^2,\cdots,\log\mid z_{k}\mid^2\Big).
\end{multline*}

We then formally define a product $C_{k+1}\Big(\Delta\big({\rm tr}_k\circ\lambda(\overline{E})\big),\log\mid z_1\mid^2,\cdots,\log\mid z_{k}\mid^2\Big)$ in the same way as (\ref{NewC}), and we set
$$\Delta_k(\overline{E})=\frac{(-1)^{k+1}}{(k+1)!(2\pi i)^k}\int_{(\mathbb{P}^1)^k}C_{k+1}\Big(\Delta\big({\rm tr}_k\circ\lambda(\overline{E})\big),\log\mid z_1\mid^2,\cdots,\log\mid z_{k}\mid^2\Big).$$
Again, it is readily checked by Lemma~\ref{deltai} that
\begin{align*}
&\Delta_{k-1}(d\overline{E})-d_\mathcal{D}\Delta_k(\overline{E})\\
=&\frac{(-1)^{k}}{(k+1)!(2\pi i)^k}\int_{(\mathbb{P}^1)^k}C_{k+1}\Big(d_\mathcal{D}\Delta\big({\rm tr}_k\circ\lambda(\overline{E})\big),\log\mid z_1\mid^2,\cdots,\log\mid z_{k}\mid^2\Big).
\end{align*}

Getting together all the above discussions, we see that $\widetilde{\mathbf{H}}_k+\Delta_k$ provides a homotopy between $\Pi^{(0)}_k$ and $\mathbf{H}^{(1)}_k+\Pi_k^\pi\circ i_*+{\pi_{\mu_n}}_*\circ \big({\rm Td}_g(\overline{T\pi})\bullet \mathbf{H}_k\big)$ which implies that $x-R_g(N)\cdot x=\pi_*i_*(x)$ for any element $x\in \widehat{K}_m(Y,\mu_n)_\Q$ with integer $m\geq1$.
\end{proof}

\begin{cor}\label{406}
Let $S$ be another regular $\mu_n$-projective arithmetic scheme with the trivial $\mu_n$-action. Let $f: Y\to S$ and $l=f\circ \pi: P\to S$ be two equivariant morphisms which are smooth over the generic fibres. Then the identity
$$f_*(x)-f_*(R_g(N)\cdot x)=l_*\circ i_*(x)$$
holds in $\widehat{K}_m(S,\mu_n)_\Q$ for any element $x\in \widehat{K}_m(Y,\mu_n)$.
\end{cor}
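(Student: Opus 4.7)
The plan is to obtain this statement as an immediate formal consequence of Proposition~\ref{405} combined with the transitivity property of direct image maps established in Corollary~\ref{305}. No new analytic input is required; the argument is pure bookkeeping at the level of rational higher arithmetic K-groups.

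First I would apply the direct image homomorphism $f_*\colon \widehat{K}_m(Y,\mu_n)_\Q\to \widehat{K}_m(S,\mu_n)_\Q$ to both sides of the identity
\[
x-R_g(N)\cdot x=\pi_*i_*(x)
\]
furnished by Proposition~\ref{405} (which is applicable because in the intended setting the $\mu_n$-action on $Y$ is trivial, as inherited from the hypothesis carried over from that proposition). By the $\Q$-linearity of $f_*$ on rational arithmetic K-theory, the left-hand side becomes $f_*(x)-f_*\bigl(R_g(N)\cdot x\bigr)$, while the right-hand side becomes $f_*\circ\pi_*\circ i_*(x)$ as an element of $\widehat{K}_m(S,\mu_n)_\Q$.

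Second, I would invoke Corollary~\ref{305} to identify $f_*\circ\pi_*$ with $g_*$ on rational arithmetic K-theory. The hypotheses are met: the $\mu_n$-action on the target $S$ is trivial by assumption, the morphism $\pi\colon P\to Y$ is automatically equivariant, proper and smooth (being a projective bundle), the morphism $f\colon Y\to S$ is equivariant, proper and smooth over the generic fibre by hypothesis, and the composition $g=f\circ\pi$ inherits these properties. Corollary~\ref{305} therefore yields $g_*=f_*\circ\pi_*$ as homomorphisms $\widehat{K}_m(P,\mu_n)_\Q\to\widehat{K}_m(S,\mu_n)_\Q$ for every $m\geq1$. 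Composing with $i_*$ gives $f_*\circ\pi_*\circ i_*(x)=g_*\circ i_*(x)$, and combining with the first step produces the claimed identity.

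There is no serious obstacle in this argument; essentially the only thing to verify, besides the straightforward bookkeeping, is that the two results one is chaining together are compatible with $\Q$-coefficients in the required way. The deeper analytic content — the immersion formula behind Proposition~\ref{405} and the transitivity of analytic torsion forms behind Corollary~\ref{305} — has already been absorbed in those earlier results, so Corollary~\ref{406} is simply their formal composition.
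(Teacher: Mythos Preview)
Your proposal is correct and follows exactly the same approach as the paper, which simply states that this is an immediate consequence of Proposition~\ref{405} and Corollary~\ref{305}. You have spelled out the bookkeeping (apply $f_*$ to the identity from Proposition~\ref{405}, then use transitivity from Corollary~\ref{305} to rewrite $f_*\circ\pi_*$ as $g_*$) in precisely the intended way.
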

\begin{proof}
This is an immediate consequence of Proposition~\ref{405} and Corollary~\ref{305}.
\end{proof}

Now, we consider general situation. Let $X,S$ be two regular $\mu_n$-projective arithmetic schemes over $(D,\Sigma,F_\infty)$, and let $Y$ be a regular $\mu_n$-equivariant arithmetic closed subscheme of $X$ with immersion $i: Y\to X$. Let $l: X\to S$ and $f=l\circ i: Y\to S$ be two equivariant morphisms which are smooth over the generic fibres. We shall suppose that the $\mu_n$-actions on $Y$ and on $S$ are trivial (e.g. $Y=X_{\mu_n},S={\rm Spec}D$). Then the main result in this subsection is the following.

\begin{thm}\label{407}
For any element $x\in \widehat{K}_m(Y,\mu_n)$ with integer $m\geq 1$, the identity
$$f_*(x)-f_*(R_g(N_{X/Y})\cdot x)=l_*\circ i_*(x)$$
holds in $\widehat{K}_m(S,\mu_n)_\Q$.
\end{thm}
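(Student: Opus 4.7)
The plan is to reduce the general immersion $i: Y \hookrightarrow X$ to the zero-section case $Y \hookrightarrow P := \mathbb{P}(N_{X/Y} \oplus \mathcal{O}_Y)$ already handled by Corollary~\ref{406}, using the deformation to the normal cone. Let $W$ be the blow-up of $X \times_D \mathbb{P}^1_D$ along $Y \times \{\infty\}$, endowed with its canonical $\mu_n$-action (trivial on the $\mathbb{P}^1_D$-factor), and let $\widetilde{W} := W \smallsetminus \widetilde{X}$, where $\widetilde{X}$ is the strict transform of $X \times \{\infty\}$. Then $\widetilde{W}$ is $\mu_n$-equivariant and flat over $\mathbb{P}^1_D$, with fiber $X$ at every finite point and fiber $P$ over $\infty$. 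The obvious embedding $\widetilde{\imath}: Y \times \mathbb{P}^1_D \hookrightarrow \widetilde{W}$ is a regular immersion restricting to $i$ over every finite $t \in \mathbb{P}^1_D$ and to the zero section over $\infty$; its normal bundle is $p_Y^* N_{X/Y}$ for $p_Y: Y \times \mathbb{P}^1_D \to Y$.

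\textbf{Execution.} Set $\widetilde{x} := p_Y^*(x) \in \widehat{K}_m(Y \times \mathbb{P}^1_D, \mu_n)$, and let $\widetilde{f}: Y \times \mathbb{P}^1_D \to \mathbb{P}^1_S$ and $\widetilde{g}: \widetilde{W} \to \mathbb{P}^1_S$ be the natural morphisms to $\mathbb{P}^1_S := \mathbb{P}^1_D \times_D S$; write $p: \mathbb{P}^1_S \to S$ for the projection. Applying Corollary~\ref{305} to the factorizations $f \circ p_Y = p \circ \widetilde{f}$ and the analogous one for $g$ (restricted fiberwise), Theorem~\ref{407} reduces to showing that the class
\[
\mathcal{E} := \widetilde{g}_* \widetilde{\imath}_*(\widetilde{x}) - \widetilde{f}_*(\widetilde{x}) + \widetilde{f}_*\bigl(R_g(N_{X/Y}) \cdot \widetilde{x}\bigr) \in \widehat{K}_m(\mathbb{P}^1_S, \mu_n)_\Q
\]
satisfies $p_*(j_0^* \mathcal{E}) = p_*(j_\infty^* \mathcal{E})$, where $j_t$ denotes the fiber inclusion over $t$. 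By construction, $p_*(j_\infty^* \mathcal{E})$ is precisely the quantity handled by Corollary~\ref{406} applied to the zero section $Y \hookrightarrow P$ and hence vanishes, while $p_*(j_0^* \mathcal{E})$ is precisely the quantity claimed to vanish in Theorem~\ref{407}. The required coincidence of the two restrictions after $p_*$ follows from homotopy invariance on $\mathbb{P}^1_S$ (the two fiber-restrictions of any class in $\widehat{K}_m(\mathbb{P}^1_S, \mu_n)_\Q$ have equal $p_*$-image, as one sees from the projective bundle formula for the regular base $S$).

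\textbf{Main obstacle.} The chief difficulty is analytic rather than combinatorial: one must choose $\mu_n$-invariant K\"ahler fibration structures on $\widetilde{g}, \widetilde{f}$ and their fiberwise restrictions coherently with the embedding $\widetilde{\imath}$, so that the Bismut--Ma immersion formula (Theorem~\ref{404}) together with its restriction compatibility (Remark~\ref{deltai}) assembles the chain-level constructions of Sections 2.2--2.3 and 4.2 into a single coherent argument across $\mathbb{P}^1_S$. This is arranged by gluing a tubular K\"ahler form in a neighborhood of $Y \times \mathbb{P}^1_D$ with a chosen form on the rest of $\widetilde{W}$ via a $\mu_n$-invariant partition of unity, and by invoking Corollary~\ref{212} to suppress the choice of fibration modulo torsion. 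A secondary technical issue is the possible non-regularity of $\widetilde{W}$ along the exceptional divisor; this is handled by working with the K$'$-theoretic variant of arithmetic K-theory (Remark~\ref{206}(v)), which coincides with $\widehat{K}_m$ on the regular schemes $Y \times \mathbb{P}^1_D$ and for which $\widetilde{\imath}$ remains a regular immersion, so that the constructions of Section 4.2 apply without modification.
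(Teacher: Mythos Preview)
Your strategy is the same as the paper's---deformation to the normal cone to reduce to Corollary~\ref{406}---but the execution has a genuine gap. The open deformation space $\widetilde{W}=W\smallsetminus\widetilde{X}$ is \emph{not} proper over $\mathbb{P}^1$: the special fibre of $W$ is $P\cup\widetilde{X}$ glued along the divisor $\mathbb{P}(N_{X/Y})\subset P$, so removing $\widetilde{X}$ leaves $P\smallsetminus\mathbb{P}(N_{X/Y})$, i.e.\ the affine total space of $N_{X/Y}$, not the projective bundle $P$. Hence your map $\widetilde{g}:\widetilde{W}\to\mathbb{P}^1_S$ is not proper and the direct image $\widetilde{g}_*$ of Definition~\ref{211} is simply undefined; the class $\mathcal{E}$ you write down does not exist. (The appeal to K$'$-theory does not help here: $W$ is already regular, and the issue is properness, not regularity or flatness.)

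The paper works instead with the full blow-up $W$, which \emph{is} proper over $\mathbb{P}^1$, and pays the price of controlling the extra component $\widetilde{X}$. Concretely, rather than invoking a base-change identity $j_t^*\circ g_*=(\text{fibre})_*\circ u_t^*$ directly (which in arithmetic K-theory carries anomaly terms), the paper applies the Bismut--Ma immersion formula to each of the divisor embeddings $X\hookrightarrow W$, $P\hookrightarrow W$, $\widetilde{X}\hookrightarrow W$, $P\cap\widetilde{X}\hookrightarrow W$ via their Koszul resolutions (Lemma~\ref{411} and Remark~\ref{412}). This yields identities with explicit correction terms $g_*(\overline{\mathcal{O}}(-D)\otimes\,\cdot\,)$ and $R_g(N_{W/D})$-contributions; one then checks that (i) the $R_g$-corrections vanish because $i^*R_g(N_{W/X})=0$ on $Y$ (the normal bundle of a divisor has trivial $\mu_n$-action along $Y$), and (ii) the line-bundle corrections for $X$ and for $P$ agree because $j_*{\rm Pr}^*x$ is supported away from $\widetilde{X}$, so the identities for $\widetilde{X}$ and $P\cap\widetilde{X}$ combine with $\overline{\mathcal{O}}(X)\cong\overline{\mathcal{O}}(P)\otimes\overline{\mathcal{O}}(\widetilde{X})$ (Lemma~\ref{deformation}) to give $h_*(\overline{\mathcal{O}}(-X)\otimes j_*{\rm Pr}^*x)=h_*(\overline{\mathcal{O}}(-P)\otimes j_*{\rm Pr}^*x)$. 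This is the substitute for the homotopy-invariance step you sketched, and it is where the actual analytic work (choice of compatible metrics on the $\mathcal{O}(D)$, Lemma~\ref{deformation}, and the chain-level computations behind Lemma~\ref{411}) is concentrated.
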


To prove Theorem~\ref{407}, we use the deformation to the normal cone construction. Denote by $W$ the blowing up of $X\times \mathbb{P}^1$ along $Y\times \{0\}$, and denote by $q_W: W\to \mathbb{P}^1$ the composition of the blow-down map $W\to X\times \mathbb{P}^1$ with the projection $X\times \mathbb{P}^1\to \mathbb{P}^1$. For any point $t\in \mathbb{A}^1\subset \mathbb{P}^1$, $t$ is called a $\mathbb{Z}$-point if it corresponds to a prime ideal $(x-a)$ in $D[x]$ with $a\in \mathbb{Z}$. Then for any $\mathbb{Z}$-point $t\in \mathbb{P}^1$ we have
$$q_W^{-1}(t)\cong\left\{
\begin{array}{ll}
    X\times \{t\}, & \hbox{if $t\neq0$,} \\
    P\cup \widetilde{X}, & \hbox{if $t=0$,} \\
\end{array}%
\right.$$
where $\widetilde{X}$ is isomorphic to the blowing up of $X$ along $Y$ and $P$ is the projective space
bundle $\mathbb{P}(N_{X/Y}\oplus\mathcal{O}_Y)$. Let $j: Y\times \mathbb{P}^1\rightarrow W$ be the closed immersion
induced by $i\times {\rm Id}$, then the component $\widetilde{X}$ doesn't meet $j(Y\times \mathbb{P}^1)$ and the intersection of
$j(Y\times \mathbb{P}^1)$ with $P$ is exactly the image of $Y$ under the zero section embedding. Moreover, denote by $s_t$ the obvious section $Y\cong Y\times\{t\}\hookrightarrow Y\times \mathbb{P}^1$ for every $\mathbb{Z}$-point $t$ and denote by $u_t$ the natural inclusion $q_W^{-1}(t)\hookrightarrow W$. We have two ${\rm Tor}$-independent squares
$$\xymatrix{ Y\times \mathbb{P}^1 \ar[r]^-{j} & W \\ Y \ar[u]^-{s_t} \ar[r]^-i & X \ar[u]^-{u_t}}$$
with $t\neq0$ and
$$\xymatrix{ Y\times \mathbb{P}^1 \ar[r]^-{j} & W \\ Y \ar[u]^-{s_0} \ar[r]^-{i_0} & \mathbb{P}(N_{X/Y}\oplus\mathcal{O}_Y) \ar[u]^-{u_0}.}$$

Notice that the complement $X\setminus Y$ is contained in $W\setminus {Y\times \mathbb{P}^1}$, we have pull-back morphism $u_t^*: \widehat{K}_{Y\times \mathbb{P}^1,m}(W,\mu_n)\to \widehat{K}_{Y,m}(X,\mu_n)$.

\begin{lem}\label{408}
For any $\mathbb{Z}$-point $t\neq0$, the diagram
$$\xymatrix{ \widehat{K}_m(Y\times \mathbb{P}^1,\mu_n) \ar[r]^-{\cong}_-{j_*} \ar[d]^-{s_t^*} & \widehat{K}_{Y\times \mathbb{P}^1,m}(W,\mu_n) \ar[d]^-{u_t^*} \\ \widehat{K}_m(Y,\mu_n) \ar[r]^-{\cong}_-{i_*} & \widehat{K}_{Y,m}(X,\mu_n) }$$
is commutative.
\end{lem}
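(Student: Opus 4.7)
The plan is to reduce the commutativity to a statement about the explicit chain-level constructions of the purity isomorphisms $j_*$ and $i_*$, combined with the Tor-independence hypothesis already recorded in the excerpt.

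First I would exploit the Tor-independence of the square displayed in the statement. Since $u_t : X \hookrightarrow W$ meets $j(Y\times\mathbb{P}^1)$ transversely along $s_t(Y)$, there is a canonical isomorphism $s_t^* N_{W/(Y\times\mathbb{P}^1)} \cong N_{X/Y}$, and hence a natural identification of projective completions $u_t^*\mathbb{P}(N_{W/(Y\times\mathbb{P}^1)}\oplus\mathcal{O}) \cong \mathbb{P}(N_{X/Y}\oplus\mathcal{O})$. One may choose $\mu_n$-invariant hermitian metrics on $N_{W/(Y\times\mathbb{P}^1)}$ such that the induced metric under this pullback coincides with the chosen metric on $N_{X/Y}$.

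Next I would recall from the beginning of Section 4.2 that $j_*$ (respectively $i_*$) is constructed using the canonical Koszul resolution and the equivariant Bott-Chern singular current for the corresponding zero section embedding, together with the chain homotopy $\mathbf{H}_k$ appearing in diagram~(\ref{bcc}). Under the identification above, $u_t^*$ sends the Koszul resolution $K(\overline{\mathcal{O}}_{Y\times\mathbb{P}^1},\overline{N}_{W/(Y\times\mathbb{P}^1)})$ term-by-term to the Koszul resolution $K(\overline{\mathcal{O}}_{Y},\overline{N}_{X/Y})$ with matching metrics, and by the functoriality of Bismut's singular currents under transverse base change by a regular immersion, the associated current $T_g(K(\overline{\mathcal{O}}_{Y\times\mathbb{P}^1},\overline{N}_{W/(Y\times\mathbb{P}^1)}))$ pulls back to $T_g(K(\overline{\mathcal{O}}_{Y},\overline{N}_{X/Y}))$. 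At the simplicial Waldhausen level this means that $u_t^*$ carries the data defining $j_*$ applied to a hermitian cube $\overline{E}$ on $Y\times\mathbb{P}^1$ to the data defining $i_*$ applied to $s_t^*\overline{E}$.

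Finally I would translate this compatibility into the commutativity of the diagram on homotopy groups: for each hermitian cube $\overline{E}$ on $Y\times\mathbb{P}^1$ representing a class in $\widehat{K}_m(Y\times\mathbb{P}^1,\mu_n)$, both $u_t^*\circ j_*(\overline{E})$ and $i_*\circ s_t^*(\overline{E})$ produce the same hermitian class in $\widehat{K}_{Y,m}(X,\mu_n)$ together with the same analytic datum, the identification being given at the level of chain homotopies. The main obstacle will be the analytic side: verifying in detail that the equivariant Bott-Chern singular current associated to the canonical Koszul resolution on $W$ restricts, under the transverse base change by $u_t$, to the corresponding current on $X$. This requires an analogue of Lemma~\ref{delta} and Remark~\ref{deltai} for singular currents under base change by a regular immersion rather than by a smooth projection, and it is the analytic heart of the argument.
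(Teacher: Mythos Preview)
Your approach and the paper's share the same core: reduce to the projective completions $P' = \mathbb{P}(N_{W/(Y\times\mathbb{P}^1)}\oplus\mathcal{O})$ and $P = \mathbb{P}(N_{X/Y}\oplus\mathcal{O})$ and verify that the Koszul resolutions and singular Bott--Chern currents defining ${j_0}_*$ and ${i_0}_*$ are compatible under pullback. However, the paper inserts a step you do not mention: it invokes the $\mathbb{A}^1$-homotopy invariance of both equivariant K-theory and Deligne--Beilinson cohomology (together with the fact that each $s_t$ is a section of $Y\times\mathbb{P}^1\to Y$) to reduce the commutativity of the original square, which involves $\widehat{K}_{Y\times\mathbb{P}^1,m}(W)$ and $\widehat{K}_{Y,m}(X)$, to that of the auxiliary square with $P'$ and $P$ in place of $W$ and $X$. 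Without this reduction your argument has a gap: the explicit chain homotopy $\mathbf{H}_k$ of diagram~(\ref{bcc}) lives on $P'$, not on $W$, and you have not explained why compatibility under the induced map $P\to P'$ forces the diagram with the genuine $u_t^*: \widehat{K}_{Y\times\mathbb{P}^1,m}(W)\to \widehat{K}_{Y,m}(X)$ to commute.

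On the analytic issue you flag as the main obstacle, the paper sidesteps it entirely. Since $N_{W/(Y\times\mathbb{P}^1)} \cong N_{X/Y}\boxtimes\mathcal{O}(-1)$, the paper equips it with the product of the given metric on $N_{X/Y}$ and the Fubini--Study metric on $\mathcal{O}(-1)$; then $s_t^*\overline{N}_{W/(Y\times\mathbb{P}^1)}$ is isometric to $\overline{N}_{X/Y}$, and the Koszul resolution and Bott--Chern singular current for $j_0$ pull back along $s_t$ to those for $i_0$ by inspection. No transverse-base-change lemma for singular currents of the kind you anticipate is needed.
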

\begin{proof}
The commutativity of the algebraic prototype of this diagram follows from the Tor-independence of the deformation diagrams, but for arithmetic K-theory it is more complicated because the morphisms $j_*$ and $i_*$ are defined via another deformation to the normal cone construction according to the $\mathbb{A}^1$-homotopy invariance of the K-theory and the Deligne-Beilinson cohomology. 

Write $c_t^*: \widehat{K}_m(Y\times \mathbb{P}^1,\mu_n)\to \widehat{K}_m(Y,\mu_n)$ for the composition $i_*^{-1}\circ u_t^*\circ j_*$. We need to show that $c_t^*=s_t^*$. The morphism $s_t^*$ is induced by the commutativity between $s_t^*$ and $\widetilde{{\rm ch}}_g$, while the morphism $c_t^*$ is induced by the homotopy defining $j_*$ and the homotopy defining $i_*$. Again, using the $\mathbb{A}^1$-homotopy invariance of the K-theory and the Deligne-Beilinson cohomology, we may consider the pull-backs of $s_t^*$ and $c_t^*$ to $\widehat{K}_m(Y\times \mathbb{P}^1\times \mathbb{A}^1,\mu_n)\to \widehat{K}_m(Y\times \mathbb{A}^1,\mu_n)$ and restrict them to $\{0\}\hookrightarrow \mathbb{A}^1$, then the statement in this lemma will follows from the commutativity of the diagram
\begin{align}\label{dn}
\xymatrix{ \widehat{K}_m(Y\times \mathbb{P}^1,\mu_n) \ar[r]^-{\cong}_-{{j_0}_*} \ar[d]^-{s_t^*} & \widehat{K}_{Y\times \mathbb{P}^1,m}(P',\mu_n) \ar[d]^-{u_t^*} \\ \widehat{K}_m(Y,\mu_n) \ar[r]^-{\cong}_-{{i_0}_*} & \widehat{K}_{Y,m}(P,\mu_n) }
\end{align}
where $P'=\mathbb{P}\Big(\big(N_{X/Y}\boxtimes \mathcal{O}(-1)\big)\oplus\mathcal{O}_{Y\times \mathbb{P}^1}\Big)$ is the projective completion of $N_{W/{Y\times \mathbb{P}^1}}$ over $Y\times \mathbb{P}^1$. It is equivalent to show that the following diagram
\begin{align}\label{dn'}
\xymatrix{ \widehat{K}_m(Y\times \mathbb{P}^1,\mu_n) \ar[r]^-{{j_0}_*} \ar[d]^-{s_t^*} & \widehat{K}_{m}(P',\mu_n) \ar[d]^-{u_t^*} \\ \widehat{K}_m(Y,\mu_n) \ar[r]^-{{i_0}_*} & \widehat{K}_{m}(P,\mu_n) }
\end{align}
is commutative because the morphism ${i_0}_*: \widehat{K}_m(Y,\mu_n) \to \widehat{K}_{m}(P,\mu_n)$ is injective.
We endow $N_{X/Y}\boxtimes \mathcal{O}(-1)$ with the product metric coming from the metric on $N_{X/Y}$ and the Fubini-Study metric on $\mathcal{O}(-1)$, then the pull-back of $\overline{N}_{W/{Y\times \mathbb{P}^1}}$ along $s_t$ is isometric to $\overline{N}_{X/Y}$ so that the pull-back along $s_t$ of the Koszul resolution and of the corresponding Bott-Chern singular current with respect to $j_0$ is exactly the Koszul resolution and the corresponding Bott-Chern singular current with respect to $i_0$. According to the construction of the homotopies defining ${j_0}_*$ and ${i_0}_*$, we get the commutativity of the diagram (\ref{dn'}) and hence of (\ref{dn}). So we are done.
\end{proof}

\begin{cor}\label{409}
For any $\mathbb{Z}$-point $t\neq0$, the diagram
$$\xymatrix{ \widehat{K}_m(Y\times \mathbb{P}^1,\mu_n) \ar[r]^-{j_*} \ar[d]^-{s_t^*} & \widehat{K}_{m}(W,\mu_n) \ar[d]^-{u_t^*} \\ \widehat{K}_m(Y,\mu_n) \ar[r]^-{i_*} & \widehat{K}_{m}(X,\mu_n) }$$
is commutative.
\end{cor}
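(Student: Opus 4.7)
The plan is to deduce Corollary 4.9 from Lemma 4.8 by factoring both horizontal maps through the equivariant arithmetic K-theory with supports. Recall from Section 4.2 that the arithmetic purity theorem of \cite{T3} gives isomorphisms $i_*^{\mathrm{supp}}: \widehat{K}_m(Y,\mu_n)\xrightarrow{\cong}\widehat{K}_{Y,m}(X,\mu_n)$ and $j_*^{\mathrm{supp}}: \widehat{K}_m(Y\times\mathbb{P}^1,\mu_n)\xrightarrow{\cong}\widehat{K}_{Y\times\mathbb{P}^1,m}(W,\mu_n)$, and that the direct image maps factor as $i_*=\phi_X\circ i_*^{\mathrm{supp}}$ and $j_*=\phi_W\circ j_*^{\mathrm{supp}}$, where $\phi_X$ and $\phi_W$ denote the natural forget-supports homomorphisms coming from the localization long exact sequences.

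The key technical step will be to verify that the pullback $u_t^{*,\mathrm{supp}}: \widehat{K}_{Y\times\mathbb{P}^1,m}(W,\mu_n)\to \widehat{K}_{Y,m}(X,\mu_n)$ appearing in Lemma 4.8 (whose existence uses the inclusion $X\setminus Y\subset W\setminus(Y\times\mathbb{P}^1)$) is compatible with the forget-supports maps, in the sense that the square
\begin{equation*}
\xymatrix{\widehat{K}_{Y\times\mathbb{P}^1,m}(W,\mu_n)\ar[r]^-{\phi_W}\ar[d]_-{u_t^{*,\mathrm{supp}}} & \widehat{K}_m(W,\mu_n)\ar[d]^-{u_t^*}\\
\widehat{K}_{Y,m}(X,\mu_n)\ar[r]^-{\phi_X} & \widehat{K}_m(X,\mu_n)}
\end{equation*}
commutes. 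Both $u_t^{*,\mathrm{supp}}$ and $u_t^*$ are realized on representatives by the same pullback of hermitian cubes along $u_t$, together with the corresponding pullback of Bott-Chern data; the only difference is whether supports are tracked. The Tor-independence of the square relating $u_t$ and $j$ (and of the analogous square for $i$) recorded just before Lemma 4.8 guarantees that the pullback respects the support condition without introducing any higher $\mathrm{Tor}$ terms, so the commutativity follows from unwinding the definitions.

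Given this compatibility, Corollary 4.9 reduces to a diagram chase. Indeed, Lemma 4.8 can be rephrased as $u_t^{*,\mathrm{supp}}\circ j_*^{\mathrm{supp}}=i_*^{\mathrm{supp}}\circ s_t^*$, and therefore
\begin{align*}
u_t^*\circ j_* &= u_t^*\circ \phi_W\circ j_*^{\mathrm{supp}} = \phi_X\circ u_t^{*,\mathrm{supp}}\circ j_*^{\mathrm{supp}}\\
&= \phi_X\circ i_*^{\mathrm{supp}}\circ s_t^* = i_*\circ s_t^*,
\end{align*}
which is precisely the asserted commutativity. The only non-routine ingredient is the verification of the forget-supports compatibility; this is the main obstacle, but it should follow cleanly from the Tor-independence already emphasized in the setup of the deformation to the normal cone. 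Once that is in place, everything else is formal.
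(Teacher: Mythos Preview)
Your proposal is correct and is essentially the intended argument: the paper states Corollary~4.9 with no proof, treating it as an immediate consequence of Lemma~4.8, and the passage from K-theory with supports to ordinary K-theory via the forget-supports maps, together with the evident compatibility of $u_t^*$ with these maps, is exactly the content you have spelled out. Your explicit factorization $i_*=\phi_X\circ i_*^{\mathrm{supp}}$, $j_*=\phi_W\circ j_*^{\mathrm{supp}}$ and the diagram chase simply make precise what the paper leaves implicit.
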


\begin{rem}\label{410}
Using the same argument as in Lemma~\ref{408}, we know that the diagram
$$\xymatrix{ \widehat{K}_m(Y\times \mathbb{P}^1,\mu_n) \ar[r]^-{j_*} \ar[d]^-{s_0^*} & \widehat{K}_{m}(W,\mu_n) \ar[d]^-{u_0^*} \\ \widehat{K}_m(Y,\mu_n) \ar[r]^-{{i_0}_*} & \widehat{K}_{m}(P,\mu_n) }$$
is also commutative.
\end{rem}

Next, we consider the commutative diagram
$$\xymatrix{ W \ar[rd]^-{l} & \\ X \ar[u]^-{u_t} \ar[r]^-{f}  & S} $$ with $\mathbb{Z}$-point $t\neq 0$ and we compare the map $f_*\circ u_t^*$ with the map $l_*$ from $\widehat{K}_m(W,\mu_n)_\Q$ to $\widehat{K}_m(S,\mu_n)_\Q$.

Firstly, for any $\mu_n$-invariant K\"{a}hler metric $\omega^X$ on $X$ which induces an invariant K\"{a}hler metric $\omega^Y$ on $Y$, there exists a $\mu_n$-invariant K\"{a}hler metric $\omega^W$ on $W$ such that the restrictions of $\omega^W$ over $X\cong X\times\{t\}$ with $t\neq 0$ and to $Y\cong Y\times\{0\}$ are exactly $\omega^X$ and $\omega^Y$. This fact follows from \cite[Lemma 3.5]{T2}. Actually, such a metric is constructed via the Grassmannian graph construction. In this construction, we have an embedding $W\to X\times
\mathbb{P}^r\times\mathbb{P}^1$ and the metric $\omega^W$ is the $\mu_n$-average of the restriction of a product metric on $X\times
\mathbb{P}^r\times\mathbb{P}^1$. We fix such an invariant K\"{a}hler metric $\omega^W$ on $W$ and endow all submanifolds of $W$ with the induced metrics. Moreover, all normal bundles appearing in the construction of the deformation to the normal cone will be endowed with the quotient metrics.

Secondly, to the three divisors $u_t(X)$, $u_0(P)$ and $u_0(\widetilde{X})$ in $W$, we have the following result.

\begin{lem}\label{deformation}
Over $W$, there are $\mu_n$-invariant hermitian metrics on $\mathcal{O}(X)$, $\mathcal{O}(P)$ and $\mathcal{O}(\widetilde{X})$ such that
the isometry $\overline{\mathcal{O}}(X)\cong\overline{\mathcal{O}}(P)\otimes\overline{\mathcal{O}}(\widetilde{X})$
holds and such that the restriction of $\overline{\mathcal{O}}(X)$ over $X$ yields the metric of
$N_{W/{X}}$, the restriction of $\overline{\mathcal{O}}(\widetilde{X})$ over $\widetilde{X}$
yields the metric of $N_{W/{\widetilde{X}}}$ and the restriction of $\overline{\mathcal{O}}(P)$ over $P$ yields
the metric of $N_{W/{P}}$.
\end{lem}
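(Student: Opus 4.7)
The plan is to first establish the isomorphism $\mathcal{O}_W(X)\cong\mathcal{O}_W(P)\otimes\mathcal{O}_W(\widetilde{X})$ at the level of holomorphic line bundles, and then construct compatible $\mu_n$-invariant hermitian metrics by a rescaling argument that crucially exploits the disjointness $X\cap(P\cup\widetilde{X})=\emptyset$. For the first step, one observes that the divisor $X$ (identified with $X\times\{t_0\}$ for some $t_0\neq 0$) and the divisor $P+\widetilde{X}$ are both fibers of $q_W:W\to\mathbb{P}^1$, hence linearly equivalent. Concretely, pulling back the defining sections of the points $\{t_0\}$ and $\{0\}$ in $\mathcal{O}_{\mathbb{P}^1}(1)$ exhibits both $\mathcal{O}_W(X)$ and $\mathcal{O}_W(P+\widetilde{X})$ as copies of $q_W^*\mathcal{O}_{\mathbb{P}^1}(1)$, yielding the desired isomorphism.

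For the second step, I would first choose any $\mu_n$-invariant smooth hermitian metric $h_P$ on $\mathcal{O}_W(P)$ whose adjunction restriction on $\mathcal{O}_W(P)|_P\cong N_{W/P}$ equals the quotient metric induced by $\omega^W$; such an extension is produced by extending the prescribed metric on $N_{W/P}$ to a tubular neighbourhood of $P$ and then to all of $W$ by a $\mu_n$-invariant partition of unity (averaging if necessary). Choose $h_{\widetilde{X}}$ on $\mathcal{O}_W(\widetilde{X})$ in the same fashion. Set provisionally $h_X^{\mathrm{prov}}:=h_P\otimes h_{\widetilde{X}}$ via the isomorphism of the previous paragraph. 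The restriction $h_X^{\mathrm{prov}}|_X$ defines a $\mu_n$-invariant smooth metric on $N_{W/X}$ which may differ from the quotient metric induced by $\omega^W$; the ratio is a smooth positive $\mu_n$-invariant function $\rho$ on $X$. Since $X$ and $P\cup\widetilde{X}$ are disjoint closed $\mu_n$-invariant subsets of $W$, there exists a $\mu_n$-invariant smooth real-valued function $\phi$ on $W$ with $\phi|_X=\log\rho$ and $\phi|_{P\cup\widetilde{X}}=0$. Replacing $h_P$ by $e^{\phi}h_P$ (and leaving $h_{\widetilde{X}}$ unchanged) preserves both restriction conditions on $P$ and $\widetilde{X}$ (since $\phi$ vanishes there) while correcting $h_X:=h_P\otimes h_{\widetilde{X}}$ so that $h_X|_X$ becomes the required quotient metric on $N_{W/X}$.

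The main point I expect to verify carefully is the internal consistency at the intersection $E=P\cap\widetilde{X}$, the exceptional divisor of $\widetilde{X}\to X$. Along $E$ the classical adjunctions $N_{P/E}\cong\mathcal{O}_W(\widetilde{X})|_E$ and $N_{\widetilde{X}/E}\cong\mathcal{O}_W(P)|_E$, combined with the two-step isomorphisms $N_{W/E}\cong N_{W/P}|_E\otimes N_{P/E}\cong N_{W/\widetilde{X}}|_E\otimes N_{\widetilde{X}/E}$, must be compatible with the tensor product decomposition of $\mathcal{O}_W(X)|_E$. This compatibility holds automatically at the level of holomorphic line bundles, and at the level of metrics it reduces to the fact that the quotient metric on $N_{W/E}$ induced by $\omega^W$ can be computed identically from either filtration, through $P$ or through $\widetilde{X}$. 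Hence no further adjustment is needed near $E$, and the metrics produced above satisfy all the stated conditions.
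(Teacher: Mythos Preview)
Your proposal is correct and follows essentially the same strategy as the paper: choose metrics on $\mathcal{O}(P)$ and $\mathcal{O}(\widetilde{X})$ that restrict correctly near $P$ and $\widetilde{X}$, then exploit the disjointness of $X$ from $P\cup\widetilde{X}$ together with a partition-of-unity argument to arrange that the tensor-product metric on $\mathcal{O}(X)$ also restricts correctly along $X$, finally averaging to achieve $\mu_n$-invariance. The only cosmetic difference is that the paper absorbs your two steps (provisional extension followed by the $e^{\phi}$-rescaling) into a single partition-of-unity extension; your formulation with the explicit correction function $\phi$ is perhaps clearer about exactly where the disjointness hypothesis enters.

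One remark: your final paragraph about compatibility at $E=P\cap\widetilde{X}$ is unnecessary. The lemma imposes conditions on $h_P|_P$, $h_{\widetilde{X}}|_{\widetilde{X}}$, and $(h_P\otimes h_{\widetilde{X}})|_X$ only; these involve restrictions of \emph{different} line bundles and are logically independent, so there is no consistency constraint to verify at $E$. The adjunction identities you write down for $N_{P/E}$ and $N_{\widetilde{X}/E}$ are true but play no role in the statement being proved. You can safely drop that paragraph.
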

\begin{proof}
choose metric on $\mathcal{O}(P)$ in a small neighborhood of $P$ such that the restriction of $\overline{\mathcal{O}}(P)$ over $P$ yields the metric of the normal bundle. Do the same for $\mathcal{O}(\widetilde{X})$. Since $X$ is closed and disjoint from $\widetilde{X}$ and $P$, we can extend these metrics via a partition of unity to metrics defined on $W$ so that the restriction of the metric that $\mathcal{O}(X)$ inherits from the isomorphism $\mathcal{O}(X)\cong\mathcal{O}(P)\otimes\mathcal{O}(\widetilde{X})$ yields the metric of the normal bundle $N_{W/X}$. We then take the $\mu_n$-averages of these metrics to make them $\mu_n$-invariant. Since the metrics on $N_{W/{X}}$, $N_{W/{P}}$ and $N_{W/{\widetilde{X}}}$ are already $\mu_n$-invariant, the $\mu_n$-invariant metrics on $\mathcal{O}(X)$, $\mathcal{O}(P)$ and $\mathcal{O}(\widetilde{X})$ obtained as above have the properties that we require.
\end{proof}

Now, consider the Koszul resolution
$$0\to \overline{\mathcal{O}}(-X)\to \overline{\mathcal{O}}_W\to {u_t}_*\overline{\mathcal{O}}_X\to 0.$$
The associated equivariant singular Bott-Chern current $T_g(W/X)$ satisfies the identity
$$d_\mathcal{D}T_g(W/X)={\rm ch}_g^0(\overline{\mathcal{O}}_W)-{\rm ch}_g^0\big(\overline{\mathcal{O}}(-X)\big)-{u_t}_*[{\rm ch}_g^0(\overline{\mathcal{O}}_X){\rm Td}_g^{-1}(\overline{N}_{W/X})].$$
We claim the following result.

\begin{lem}\label{411}
For any element $x\in \widehat{K}_m(W,\mu_n)_\Q$ with integer $m\geq 1$, the identity
$$f_*\circ u_t^*(x)-f_*(R_g(N_{W/X})\cdot u_t^*x)=l_*(x)-l_*(\overline{O}(-X)\otimes x)$$ hold in
$\widehat{K}_m(S,\mu_n)_\Q$.
\end{lem}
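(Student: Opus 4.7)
The plan is to emulate the proof of Proposition~\ref{405}, with the zero-section embedding $i_0:Y\hookrightarrow\mathbb{P}(N\oplus\mathcal{O}_Y)$ replaced by the codimension-one embedding $u_t:X\hookrightarrow W$, and the projection $\pi:P\to Y$ replaced by the proper morphism $g:W\to S$. The algebraic analogue of the identity is immediate: the Koszul resolution
$$0\to\overline{\mathcal{O}}(-X)\to\overline{\mathcal{O}}_W\to{u_t}_*\overline{\mathcal{O}}_X\to 0$$
combined with the projection formula gives ${u_t}_*u_t^*(x)=x-\overline{\mathcal{O}}(-X)\otimes x$ in $K_m(W,\mu_n)_{\Q}$, and the transitivity $g_*\circ{u_t}_*=f_*$ of Corollary~\ref{305} then yields $f_*u_t^*(x)=g_*(x)-g_*(\overline{\mathcal{O}}(-X)\otimes x)$ at the level of rational algebraic K-theory. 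The real content of the lemma is the arithmetic refinement: the torsion correction arising from the equivariant Bott-Chern singular current $T_g(W/X)$, once pushed down via $g_*$, must be identified with the term $-f_*(R_g(N_{W/X})\cdot u_t^*x)$.

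Concretely, I would proceed as follows. First, define the pullback $u_t^*:\widehat{K}_m(W,\mu_n)\to\widehat{K}_m(X,\mu_n)$ at the chain level via the arithmetic K$'$-theory of Remark~\ref{206}(v). Second, construct a chain homotopy $\mathbf{H}^{(1)}_k$ between ${\rm ch}_g\circ{u_t}_*\circ u_t^*$ and ${\rm ch}_g\circ\bigl(\mathrm{Id}-\overline{\mathcal{O}}(-X)\otimes(-)\bigr)$, assembled from a suitable emi-2-cube on $W\times(\mathbb{P}^1)^k$ that compares the Koszul resolution tensored with ${\rm tr}_k\circ\lambda(\overline{E})$ to its image under $u_t^*$ followed by ${u_t}_*$, via the modified second-transgression construction already used in the proofs of Propositions~\ref{210}, \ref{304}, and \ref{405}. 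Third, apply the equivariant Bismut-Ma immersion formula (Theorem~\ref{404}) to the tower $X\hookrightarrow W\to S$ with $\overline{\eta}=\overline{\mathcal{O}}_X$ and $\overline{\xi}_\cdot=\{\overline{\mathcal{O}}(-X)\to\overline{\mathcal{O}}_W\}$, after pulling back along $\mathrm{pr}:W\times(\mathbb{P}^1)^k\to W$ and twisting by ${\rm tr}_k\circ\lambda(u_t^*\overline{E})$, which is legitimate by the base-change compatibility stated in Remark~\ref{deltai}. Fourth, integrate the resulting pointwise identity against the kernel $C_{k+1}(\cdot,\log|z_1|^2,\ldots,\log|z_k|^2)$ and combine the outcome with $\mathbf{H}^{(1)}_k$, the chain homotopies $\Pi^{u_t}_k$, $\Pi^g_k$, $\Pi^f_k$ of Theorem~\ref{209} for the three direct images, the transitivity chain homotopy of Proposition~\ref{304} (which compares $\Pi^g_k\circ{u_t}_*+g_*\circ\Pi^{u_t}_k$ with $\Pi^f_k$ modulo the composite Todd correction), and a delta correction $\Delta_k$ built from Remark~\ref{deltai}. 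A final application of Lemma~\ref{NewCA} packages these into a homotopy of homotopies on simple complexes, yielding the asserted equality in $\widehat{K}_m(S,\mu_n)_{\Q}$.

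The main difficulty will be the combinatorial bookkeeping: explicitly constructing the modified second-transgression bundles ${\rm tr}_2$ for the various emi-2-cubes so that their prescribed codimension-one restrictions on $S\times(\mathbb{P}^1)^{k+2}$ reproduce precisely the singular currents, torsion forms, and Bott-Chern forms appearing after integration of the Bismut-Ma identity, and then verifying that, once pushed down by ${g_{\mu_n}}_*$, the contributions coming from ${\rm Td}_g(\overline{\mathcal{N}})\,{\rm Td}_g^{-1}(\overline{N}_{W/X})$, $T_g(\overline{\xi}_\cdot)$, and ${\rm ch}_g(\overline{\Xi}_\cdot)$ on the right-hand side of Theorem~\ref{404} cancel with the corresponding contributions of $\Pi^{u_t}_k$ and the composite Todd correction of Proposition~\ref{304}, leaving precisely the term $\tfrac{1}{(2\pi i)^{r_f}}\int_{X_{\mu_n}/S}{\rm Td}_g(\overline{Tf})\,R_g(\overline{N}_{W/X})\,{\rm ch}_g(u_t^*\overline{E})$. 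This parallels the bookkeeping carried out in Propositions~\ref{210}, \ref{304}, and especially \ref{405}, so while intricate it is entirely template-driven.
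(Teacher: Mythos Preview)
Your proposal has the right analytic kernel---applying the Bismut--Ma immersion formula (Theorem~\ref{404}) to the tower $X\hookrightarrow W\to S$ with the Koszul resolution $\overline{\mathcal{O}}(-X)\to\overline{\mathcal{O}}_W$---but the organizational scaffold you build around it is not viable with the tools available.

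The specific problem is your appeal to ``$\Pi^{u_t}_k$ of Theorem~\ref{209}'' and ``the transitivity chain homotopy of Proposition~\ref{304}''. Both Theorem~\ref{209} and Proposition~\ref{304} are for holomorphic \emph{submersions}; $u_t$ is a closed immersion, so there is no analytic torsion $\Pi^{u_t}_k$. The embedding morphism ${u_t}_*$ is defined via a completely different homotopy, the Bott--Chern singular current $\mathbf{H}_k$ of~(\ref{bcc}). More seriously, no transitivity statement of the form $g_*\circ{u_t}_*=\,?$ has been proved anywhere: Theorem~\ref{407}, toward which Lemma~\ref{411} is a step, is \emph{precisely} the assertion that $g_*\circ i_*(y)=f_*(y)-f_*(R_g(N)\cdot y)$ for a closed immersion $i$. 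So factoring the lemma as ``${u_t}_*u_t^*(x)=x-\overline{\mathcal{O}}(-X)\otimes x$ on $W$'' followed by ``apply $g_*\circ{u_t}_*$'' is circular.

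The paper avoids this entirely. It never invokes ${u_t}_*$ as an arithmetic push-forward. Instead it pushes the Koszul resolution directly to $S$ along the smooth map $g$, obtaining a short exact sequence
\[
\chi(\overline{E}):\quad 0\to g_*(\overline{\mathcal{O}}(-X)\otimes\overline{E})\to g_*(\overline{E})\to f_*(u_t^*\overline{E})\to 0
\]
of hermitian cubes on $S$. This $\chi$ is itself a chain homotopy between $g_*-g_*(\overline{\mathcal{O}}(-X)\otimes-)$ and $f_*\circ u_t^*$ at the level of $\widetilde{\Z}C_*$, and $\mathbf{H}^{(1)}_k$ is built from it. A second homotopy $\mathbf{H}^{(2)}_k$ on the Deligne-complex side is built from $T_g(W/X)$ and the Todd correction ${\rm Td}_g(\overline{\mathcal{N}})\,{\rm Td}_g^{-1}(\overline{N}_{W/X})$. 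One then compares $\Pi^g_k-\Pi^g_k(\overline{\mathcal{O}}(-X)\otimes-)$ with $\Pi^f_k\circ u_t^*$ (all torsions for smooth maps) via an emi-$2$-cube $H_\chi(\overline{E})$ on $S\times(\mathbb{P}^1)^k$, and the immersion formula closes the computation. Only the smooth direct images $g_*$ and $f_*$ and the pull-back $u_t^*$ enter; no composition with an embedding push-forward is needed.
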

\begin{proof}
Let $\overline{E}$ be a $l$-acyclic hermitian $k$-cube in $\widehat{\mathcal{P}}(W,\mu_n)$. Since $W$ admits a very ample invertible $\mu_n$-sheaf which is relative to the morphism $l: W\to S$ (cf. \cite[Lemma 3.9]{T2}), we may assume that $\overline{O}(-X)\otimes \overline{E}$ is also $l$-acyclic and $u_t^*\overline{E}$ is $f$-acyclic. Then we have a short exact sequence of hermitian $k$-cubes in $\widehat{\mathcal{P}}(S,\mu_n)$
$$\chi(\overline{E}): \quad 0\to l_*(\overline{\mathcal{O}}(-X)\otimes \overline{E})\to l_*(\overline{E})\to f_*({u_t}^*\overline{E})\to 0,$$ which will be regarded as a hermitian $(k+1)$-cube and as a chain homotopy between the maps $l_*-l_*(\overline{\mathcal{O}}(-X)\otimes)$ and $f_*\circ u_t^*$. Consequently, the formula
$$\mathbf{H}^{(1)}_k(\overline{E})=\frac{(-1)^{k+1}}{2(k+1)!(2\pi i)^{k+1}}\int_{(\mathbb{P}^1)^{k+1}}{\rm ch}_g^0\Big({\rm tr}_{k+1}\circ\lambda\big(\chi(\overline{E})\big)\Big)\wedge C_{k+1}(\log\mid z_1\mid^2,\cdots,\log\mid z_{k+1}\mid^2)$$ defines a chain homotopy between ${\rm ch}_g\circ l_*-{\rm ch}_g\circ l_*(\overline{\mathcal{O}}(-X)\otimes)$ and ${\rm ch}_g\circ f_*\circ u_t^*$.

On the other hand, for any element $\alpha\in \bigoplus_{p\geq 0}D^{2p-*}(W_{\mu_n},p)_{R_n}$, the formula $$\frac{1}{(2\pi i)^{r_l}}\int_{W_{\mu_n}/S}T_g(W/X)\bullet {\rm Td}_g(\overline{Tg})\bullet \alpha+\frac{1}{(2\pi i)^{r_f}}\int_{X_{\mu_n}/S}{\rm Td}_g(\overline{\mathcal{N}})\bullet{\rm Td}_g^{-1}(\overline{N}_{W/X})\bullet\alpha$$ gives a chain homotopy between the maps ${l_{\mu_n}}_!\circ ({\rm Td}_g(\overline{Tg})\bullet)-{l_{\mu_n}}_!\circ \Big({\rm Td}_g(\overline{Tg}){\rm ch}_g^0\big(\overline{\mathcal{O}}(-X)\big)\bullet\Big)$ and ${f_{\mu_n}}_!\circ ({\rm Td}_g(\overline{Tf})\bullet u_t^*)$. Hence, it defines a chain homotopy between ${l_{\mu_n}}_!\circ ({\rm Td}_g(\overline{Tg})\bullet{\rm ch}_g)-{l_{\mu_n}}_!\circ \Big({\rm Td}_g(\overline{Tg}){\rm ch}_g^0\big(\overline{\mathcal{O}}(-X)\big)\bullet{\rm ch}_g\Big)$ and ${f_{\mu_n}}_!\circ ({\rm Td}_g(\overline{Tf})\bullet u_t^*\circ {\rm ch}_g)$. Like before, using the projection formula and the fact that the deformation to the normal cone construction is base-change invariant along smooth morphisms, we write the induced homotopy as
\begin{multline*}
\mathbf{H}^{(2)}_k(\overline{E})=\frac{(-1)^{k}}{2k!(2\pi i)^{k}}\int_{(\mathbb{P}^1)^{k}}\bigg(\Big(\frac{1}{(2\pi i)^{r_l}}\int_{W_{\mu_n}\times (\mathbb{P}^1)^{k}/{S\times (\mathbb{P}^1)^{k}}}T_g(W/X)\bullet {\rm Td}_g(\overline{Tg}){\rm ch}_g^0\big({\rm tr}_{k}\circ\lambda(\overline{E})\big)\Big)\\
{\wedge C_{k}(\log\mid z_1\mid^2,\cdots,\log\mid z_{k}\mid^2)\bigg)}\\
+\frac{(-1)^{k}}{2k!(2\pi i)^{k}}\int_{(\mathbb{P}^1)^{k}}\bigg(\Big(\frac{1}{(2\pi i)^{r_f}}\int_{X_{\mu_n}\times (\mathbb{P}^1)^{k}/{S\times (\mathbb{P}^1)^{k}}}{\rm Td}_g(\overline{\mathcal{N}})\bullet{\rm Td}_g^{-1}(\overline{N}_{W/X}){\rm ch}_g^0\big({\rm tr}_{k}\circ\lambda(u_t^*\overline{E})\big)\Big)\hfill\\
{\wedge C_{k}(\log\mid z_1\mid^2,\cdots,\log\mid z_{k}\mid^2)\bigg).}
\end{multline*}

Now, we denote by $H_{\chi}(\overline{E})$ the following $2$-cube of hermitian bundles on $S\times (\mathbb{P}^1)^k$
$$\xymatrix{l_*\big({\rm tr}_k\circ\lambda(\overline{\mathcal{O}}(-X)\otimes\overline{E})\big) \ar[r]^-{\rm Id} \ar[d] & {\rm tr}_k\circ\lambda\big(l_*(\overline{\mathcal{O}}(-X)\otimes\overline{E})\big) \ar[r] \ar[d] & 0 \ar[d]\\
l_*\big({\rm tr}_k\circ\lambda(\overline{E})\big) \ar[r]^-{\rm Id} \ar[d] & {\rm tr}_k\circ\lambda\big(l_*(\overline{E})\big) \ar[r] \ar[d] & 0 \ar[d]\\
f_*\big({\rm tr}_k\circ\lambda(u_t^*\overline{E})\big) \ar[r]^-{\rm Id} & {\rm tr}_k\circ\lambda\big(f_*(u_t^*\overline{E})\big) \ar[r] & 0}$$ and we set
\begin{align*}
\widetilde{\mathbf{H}}_k(\overline{E}):=\frac{(-1)^{k+2}}{2(k+2)!(2\pi i)^{k+2}}&\int_{(\mathbb{P}^1)^{k+2}}{\rm ch}_g^0\Big({\rm tr}_2\circ \lambda\big(H_\chi(\overline{E})\big)\Big)\wedge C_{k+2}(\log\mid z_1\mid^2,\cdots,\log\mid z_{k+2}\mid^2),
\end{align*}
it satisfies the differential equation
\begin{multline*}
d_\mathcal{D}\circ \widetilde{\mathbf{H}}_k(\overline{E})\\
=\widetilde{\mathbf{H}}_{k-1}\circ d\overline{E}+\frac{(-1)^{k+1}}{2(k+1)!(2\pi i)^{k+1}}\int_{(\mathbb{P}^1)^{k+1}}{\rm ch}_g^0\Big({\rm tr}_{k+1}\circ\lambda\big(\chi(\overline{E})\big)\Big)\wedge C_{k+1}(\log\mid z_1\mid^2,\cdots,\log\mid z_{k+1}\mid^2)\hfill\\
-\frac{(-1)^{k+1}}{2(k+1)!(2\pi i)^{k+1}}\int_{(\mathbb{P}^1)^{k+1}}{\rm ch}_g^0\bigg({\rm tr}_1\circ\lambda\Big(\chi\big({\rm tr}_k\circ \lambda(\overline{E})\big)\Big)\bigg)\wedge C_{k+1}(\log\mid z_1\mid^2,\cdots,\log\mid z_{k+1}\mid^2)\hfill\\
-\Pi'^l_k(\overline{E})+\Pi'^l_k(\overline{\mathcal{O}}(-X)\otimes\overline{E})+\Pi'^f_k(u_t^*\overline{E})\\
=\widetilde{\mathbf{H}}_{k-1}\circ d\overline{E}+\mathbf{H}^{(1)}_k(\overline{E})-\Pi'^l_k(\overline{E})+\Pi'^l_k(\overline{\mathcal{O}}(-X)\otimes\overline{E})+\Pi'^f_k(u_t^*\overline{E})\hfill\\
-\frac{(-1)^{k+1}}{2(k+1)!(2\pi i)^{k+1}}\int_{(\mathbb{P}^1)^{k+1}}{\rm ch}_g^0\bigg({\rm tr}_1\circ\lambda\Big(\chi\big({\rm tr}_k\circ \lambda(\overline{E})\big)\Big)\bigg)\wedge C_{k+1}(\log\mid z_1\mid^2,\cdots,\log\mid z_{k+1}\mid^2).\hfill
\end{multline*}

Similar to the tricks that we used frequently before, we set
\begin{multline*}
\mathbf{H}^{(2')}_k(\overline{E})\\
=\frac{(-1)^{k+1}}{(k+1)!(2\pi i)^{k}}\int_{(\mathbb{P}^1)^{k}}C_{k+1}\Big(\frac{1}{(2\pi i)^{r_l}}\int_{W_{\mu_n}\times (\mathbb{P}^1)^{k}/{S\times (\mathbb{P}^1)^{k}}}T_g(W/X)\bullet {\rm Td}_g(\overline{Tg}){\rm ch}_g^0\big({\rm tr}_{k}\circ\lambda(\overline{E})\big)\hfill\\
{,\log\mid z_1\mid^2,\cdots,\log\mid z_{k}\mid^2\Big)}\\
+\frac{(-1)^{k+1}}{(k+1)!(2\pi i)^{k}}\int_{(\mathbb{P}^1)^{k}}C_{k+1}\Big(\frac{1}{(2\pi i)^{r_f}}\int_{X_{\mu_n}\times (\mathbb{P}^1)^{k}/{S\times (\mathbb{P}^1)^{k}}}{\rm Td}_g(\overline{\mathcal{N}})\bullet{\rm Td}_g^{-1}(\overline{N}_{W/X}){\rm ch}_g^0\big({\rm tr}_{k}\circ\lambda(u_t^*\overline{E})\big)\hfill\\
{,\log\mid z_1\mid^2,\cdots,\log\mid z_{k}\mid^2\Big).}
\end{multline*}
then our lemma follows from the Bimut-Ma's immersion formula and the fact that there exists a homotopy between $\mathbf{H}^{(2')}_k(\overline{E})$ and $\mathbf{H}^{(2)}_k(\overline{E})$. So we are done.
\end{proof}

\begin{rem}\label{412}
Similar to Lemma~\ref{411}, we consider other three divisors $\xymatrix{ W & P \ar[l]^-{u_0} \ar[r]^-{p} & S}$, $\xymatrix{ W & \widetilde{X} \ar[l]^-{u_0}  \ar[r]^-{h_1} & S}$ and $\xymatrix{ W & P\cap \widetilde{X} \ar[l]^-{u_0}  \ar[r]^-{h_2} & S}$ and corresponding Koszul resolutions
$$ 0\to \overline{\mathcal{O}}(-P)\to \overline{\mathcal{O}}_{W}\to {u_0}_*\overline{\mathcal{O}}_{P}\to 0,$$
$$ 0\to \overline{\mathcal{O}}(-\widetilde{X})\to \overline{\mathcal{O}}_{W}\to {u_0}_*\overline{\mathcal{O}}_{\widetilde{X}}\to 0,$$
and
$$ 0\to \overline{\mathcal{O}}(-\widetilde{X})\otimes\overline{\mathcal{O}}(-P)\to
\overline{\mathcal{O}}(-\widetilde{X})\oplus \overline{\mathcal{O}}(-P)\to
\overline{\mathcal{O}}_{W}\to {u_0}_*\overline{\mathcal{O}}_{\widetilde{X}\cap P}\to 0.$$
Then, for any element $x\in \widehat{K}_m(W,\mu_n)_\Q$, we have
$$p_*\circ u_0^*(x)-p_*(R_g(N_{W/P})\cdot u_0^*x)=l_*(x)-l_*\big(\overline{O}(-P)\otimes x\big),$$
$${h_1}_*\circ u_0^*(x)-{h_1}_*(R_g(N_{W/\widetilde{X}})\cdot u_0^*x)=l_*(x)-l_*\big(\overline{O}(-\widetilde{X})\otimes x\big),$$ and
\begin{multline*}
{h_2}_*\circ u_0^*(x)-{h_2}_*(R_g(N_{W/{P\cap \widetilde{X}}})\cdot u_0^*x)=l_*(x)-l_*(\overline{O}(-P)\otimes x)-l_*(\overline{O}(-\widetilde{X})\otimes x)+l_*(\overline{O}(-P)\otimes\overline{O}(-\widetilde{X})\otimes x)
\end{multline*}
which hold in
$\widehat{K}_m(S,\mu_n)_\Q$.
\end{rem}

Now, we are ready to give the proof of Theorem~\ref{407}.

\begin{proof}(of Theorem~\ref{407})
Let $x$ be an element in $\widehat{K}_m(Y,\mu_n)_\Q$, we consider the following two diagrams
$$\xymatrix{ Y\times \mathbb{P}^1 \ar[r]^-{j} & W \ar[rd]^-{h} & \\ Y \ar[u]^-{s_t} \ar[r]^-i & X \ar[u]^-{u_t} \ar[r]^-{l} & S}$$
with $\mathbb{Z}$-point $t\neq0$ and
$$\xymatrix{ Y\times \mathbb{P}^1 \ar[r]^-{j} & W \ar[rd]^-{h} & \\ Y \ar[u]^-{s_0} \ar[r]^-{i_0} & \mathbb{P}(N_{X/Y}\oplus\mathcal{O}_Y) \ar[u]^-{u_0} \ar[r]^-{p} & S.}$$
By Corollary~\ref{409} and the fact that $s_t$ is a section of the obvious projection $Pr$ from $Y\times \mathbb{P}^1$ to $Y$, we have that $i_*(x)=u_t^*\circ j_* \circ Pr^*(x)$ and hence $l_*\circ i_*(x)=l_*\circ u_t^*\circ j_* \circ Pr^*(x)$. According to Lemma~\ref{411}, $$l_*\circ u_t^*(j_*Pr^*x)=h_*(j_*Pr^*x)-h_*(\overline{O}(-X)\otimes j_*Pr^*x)+l_*(R_g(N_{W/X})\cdot i_*x).$$
Similarly, we have
$$l_*\circ u_0^*(j_*Pr^*x)=h_*(j_*Pr^*x)-h_*(\overline{O}(-P)\otimes j_*Pr^*x)+p_*(R_g(N_{W/P})\cdot i_*x).$$
Notice that the image $j(Y\times \mathbb{P}^1)$ doesn't meet $\widetilde{X}$, the localization sequence of the higher equivariant arithmetic K-groups implies that $u_0^*(j_*Pr^*x)$ vanishes in $\widehat{K}_m(\widetilde{X},\mu_n)_\Q$ and in $\widehat{K}_m(P\cap \widetilde{X},\mu_n)_\Q$ so that $$h_*(\overline{O}(-X)\otimes j_*Pr^*x)=h_*(\overline{O}(-P)\otimes j_*Pr^*x).$$ This can be seen from the several identities mentioned in Remark~\ref{412}. On the other hand,
\begin{align*}
R_g(N_{W/X})\cdot i_*x=&R_g(N_{W/X}){\rm ch}_g(i_*x)=R_g(N_{W/X})i_*\big({\rm Td}_g^{-1}(N_{X/Y}){\rm ch}_g(x)\big)\\
=&i_*\big(i^*R_g(N_{W/X}){\rm Td}_g^{-1}(N_{X/Y}){\rm ch}_g(x)\big)=0.
\end{align*}
The same reasoning gives that $R_g(N_{W/P})\cdot i_*x=0$ also. So $l_*\circ i_*(x)$ is actually equal to $p_*\circ {i_0}_*(x)$. Therefore, the statement in Theorem~\ref{407} follows from Corollary~\ref{406}.
\end{proof}

\subsection{Proof of the statement}
In this subsection, we give a complete proof of Theorem~\ref{402}. Denote by $i$ the closed immersion $X_{\mu_n}\to X$, then the arithmetic concentration theorem (cf. \cite[Theorem 5.2]{T3}) tells us that
$$i_*: \widehat{K}_m(X_{\mu_n},\mu_n)_\rho\cong \widehat{K}_m(X,\mu_n)_\rho$$ with inverse map $\otimes \lambda_{-1}^{-1}(\overline{N}_{X/{X_{\mu_n}}})\circ \tau$.

Then let $x$ be any element in $\widehat{K}_m(X,\mu_n)$, we apply Theorem~\ref{407} to the morphisms $i$, $f$ and $f_{\mu_n}=f\circ i$ and we compute
\begin{align*}
f_*(x)=&f_*\big(i_*\circ \otimes \lambda_{-1}^{-1}(\overline{N}_{X/{X_{\mu_n}}})\circ \tau(x)\big)\\
=&f_*\circ i_*\big(\otimes \lambda_{-1}^{-1}(\overline{N}_{X/{X_{\mu_n}}})\circ \tau(x)\big)\\
=&{f_{\mu_n}}_*\big(\otimes \lambda_{-1}^{-1}(\overline{N}_{X/{X_{\mu_n}}})\circ \tau(x)\big)-{f_{\mu_n}}_*\big(\otimes R_g(N_{X/{X_{\mu_n}}})\circ\otimes \lambda_{-1}^{-1}(\overline{N}_{X/{X_{\mu_n}}})\circ \tau(x)\big)\\
=&{f_{\mu_n}}_*\big(\Lambda_R\circ \tau(x)\big)
\end{align*}
which holds in $\widehat{K}_m(Y,\mu_n)_\rho\otimes \Q$. This completes the proof of Theorem~\ref{402}.

\section{Appendix: Remarks on the equivariant analytic torsion forms and the immersion formula}

\subsection{Anomaly formula for the equivariant analytic torsion forms}
\label{Ma2.s1}
Let $W, V$ be two $\mu_n$-projective complex manifolds, and
 let $f: W\to V$ be an equivariant, holomorphic submersion with fiber $X$. 
 Fix a $\mu_n$-invariant K\"{a}hler metric on $W$ and choose 
 corresponding K\"{a}hler form $\omega$ as a K\"{a}hler fibration 
 structure on $f$. We fix a primitive $n$-th root of unity $g$ as 
 a generator of $\mu_n(\C)$. In the following, ${\rm ch}_g$
  and ${\rm Td}_g$ should stand for the usual Chern-Weil forms 
  with the factor $2\pi i$ in their definitions. Notice that they are 
  denoted by ${\rm ch}'_g$ and ${\rm Td}'_g$ in the text.

Let $(E,h^E)$ be a $\mu_n$-equivariant hermitian vector bundle 
on $W$ such that $E$ is $f$-acyclic. 
Let $T_g(f,\omega,h^E)\in\bigoplus_{p\geq0}A^{p,p}(V_{\mu_n})$ 
be the equivariant analytic torsion form
 \cite[(2.27)]{Ma1} which satisfies 
the differential equation
$$\frac{\bar\partial \partial}{2\pi i}T_g(f,\omega,h^E)
={\rm ch}_g(f_*E,f_*h^E)
-\int_{W_{\mu_n}/{V_{\mu_n}}}
{\rm Td}_g(Tf,h^{Tf}){\rm ch}_g(E,h^E)$$
where $h^{Tf}$ is the hermitian metric induced by $\omega$
 on the holomorphic tangent bundle $Tf$. We shall write 
 $T_g(\omega,h^E)$ for $T_g(f,\omega,h^E)$, 
 if there is no ambiguity about the underlying map.
  The following result is \cite[Theorem 2.13]{Ma1}
  which extends  \cite[Theorem 1.23]{BGS3}, 
  \cite[Theorem 3.10]{BK}, \cite[Theorem 2.5]{Bi95}.

\begin{thm}\label{A10}(Anomaly formula)
Let $\omega'$ be the form associated to another K\"{a}hler
 fibration structure on $f: W\to V$. Let $h'^{Tf}$ be the metric on
$Tf$ induced by  $\omega'$.  
Then the following identity holds in 
$\bigoplus_{p\geq 0}A^{p,p}(V_{\mu_n})/
{(\Im{\partial}+\Im{\bar\partial})}$:
\begin{align*}
T_g(\omega,h^E)-T_g(\omega',h^E)=& - 
\widetilde{{\rm ch}}_g(f_*E,h^{f_*E},h'^{f_*E})\\
&+ \int_{W_{\mu_n}/{V_{\mu_n}}}
\widetilde{{\rm Td}}_g(Tf,h^{Tf},h'^{Tf}){\rm ch}_g(E,h^E)
\end{align*}
where $(f_*E,h^{f_*E},h'^{f_*E})$ and $(Tf,h^{Tf},h'^{Tf})$ 
stand for the exact sequences of hermitian vector bundles
$$\xymatrix{ 0\ar[r] & (f_*E,h^{f_*E}) \ar[r]^-{\rm Id} &
 (f_*E,h'^{f_*E}) \ar[r] & 0 \ar[r] & 0}$$
and
$$\xymatrix{ 0\ar[r] & (Tf,h^{Tf}) \ar[r]^-{\rm Id} &
 (Tf,h'^{Tf}) \ar[r] & 0 \ar[r] & 0}.$$
\end{thm}

We shall see that there is a natural way to write down explicitly 
some differential forms $\Delta^0(f,\overline{E},\omega,\omega')$, 
$\Delta_0(f,\overline{E},\omega,\omega')$ such that they 
are functorial in certain sense and they measure 
the difference of the anomaly formula.
\begin{align*}
\Delta&=\partial\Delta^0(f,\overline{E},\omega,\omega')
+\bar\partial\Delta_0(f,\overline{E},\omega,\omega')\\
&=T_g(\omega,h^E)-T_g(\omega',h^E)
+ \widetilde{{\rm ch}}_g(f_*E,h^{f_*E},h'^{f_*E})
- \int_{W_{\mu_n}/{V_{\mu_n}}}
\widetilde{{\rm Td}}_g(Tf,h^{Tf},h'^{Tf}){\rm ch}_g(E,h^E).
\end{align*}
To do so, we need to fix the construction of
$\widetilde{{\rm ch}}_g(f_*E,h^{f_*E},h'^{f_*E})$, 
$\widetilde{{\rm Td}}_g(Tf,h^{Tf},h'^{Tf})$ at the differential form level,
i.e., without modulo $\Im{\partial}+\Im{\bar\partial}$.
Let's fix the  definition of $\widetilde{{\rm ch}}_g(f_*E,h^{f_*E},h'^{f_*E})$
as the left side of  \cite[(2.42)]{Bi95},  and the definition 
$\widetilde{{\rm Td}}_g(Tf,h^{Tf},h'^{Tf})$ as the integral for $0$
to $1$ for the parameter $c$ of the differential form as the part
$\frac{\partial}{\partial b}\cdots$ via the last term of \cite[(3.67)]{BK}, 
note that we can also 
fix the path of the metric as the segment direct connecting 
two metrics. Thus we can write them under the notation in
\cite[(2.34), (2.56)] {Ma1} 
(cf. also the convention before Theorem 2.5 of this paper),
\begin{align}\label{Ma2.3.2}\begin{split}
&\widetilde{{\rm ch}}_g(f_*E,h^{f_*E},h'^{f_*E})
=  \int_0^1 \Phi \mbox{Tr}_s[g Q^{H(X, E|_{X})} _c 
\exp(-(\nabla^{H(X, E|_{X})}_c)^2)] dc,\\
&\widetilde{{\rm Td}}_g(Tf,h^{Tf},h'^{Tf})
=   \int_0^1 \frac{\partial}{\partial b}
\Big [ \mbox{Td}\Big (\frac{-R^{TX^g}_c}{2i\pi}
-b(h^{TX^g})^{-1}\frac{\partial h^{TX^g}}{\partial c}\Big ) \\
&\hspace{3cm} \times \prod_{j=1}^q 
\frac{\mbox{Td}}{e}\Big (\frac{-R^{N_{X^g/X}^{\theta_j}}_c}{2i\pi}
-b(h^{N^{\theta_j}_{X^g/X}})^{-1}
\frac{\partial h^{N^{\theta_j}_{X^g/X}}}{\partial c}+i \theta_j\Big )
\Big ]_{b=0} d c.
\end{split}\end{align}

Let $V_1$ be an equivariant closed submanifold of $V$, and let 
$W_1=f^{-1}(V_1)\subset W$ be the closed submanifold of $W$ 
with restricted K\"{a}hler metric. Then $f_1: W_1\to V_1$ is also 
an equivariant holomorphic submersion with compact fibre. 
Denote by $j$ (resp. $i$) the natural embedding $W_1\to W$ 
(resp. $V_1\to V$) and by $\omega_1, \omega'_1$
 the induced K\"{a}hler forms $j^*\omega, j^*\omega'$.  
 Let $\overline{E}$ be an $f$-acyclic hermitian bundle on $W$.

\begin{thm}\label{A1}
There is a natural way to write down explicitly differential forms 
$\Delta^0(f,\overline{E},\omega,\omega')$,
 $\Delta_0(f,\overline{E},\omega,\omega')$ such that 
$\Delta=\partial\Delta^0(f,\overline{E},\omega,\omega')
+\bar\partial\Delta_0(f,\overline{E},\omega,\omega')$ and 
they are functorial in the following sense.
\begin{align}\label{Ma2.3.5}
	i_{\mu_n}^*\Delta^0(f,\overline{E},\omega,\omega')
=\Delta^0(f_1,j^*\overline{E},\omega_1,\omega'_1)
\end{align}
and
\begin{align}\label{Ma2.3.6}
	i_{\mu_n}^*\Delta_0(f,\overline{E},\omega,\omega')
=\Delta_0(f_1,j^*\overline{E},\omega_1,\omega'_1).
\end{align}
\end{thm}
\begin{proof}
By the equivariant extension of 
\cite[Definition 3.14, Theorems 3.16, 3.17]{BK}] (cf. \cite[(2.34)]{Ma1}), 
there exist differential forms $\theta^1, \theta^2$ and $\theta^3$ 
such that 
\begin{align}\label{Ma2.3.7}
	\Delta+ d \varrho=\bar{\partial}\theta^1+\partial \theta^2
+\bar{\partial}\partial\theta^3
\end{align}
and $d\varrho$ is from the last term of  
\cite[(3.38)]{BK}, in particular, $\varrho$ is a local term from the 
small time heat kernel asymptotics of Bismut Superconnection, 
$\theta^k$ ($k=1, 2, 3$) have universal expression in terms
 of $g, \omega, \omega'$ and $h^E$ via the Bismut superconnection. 
 Thus, from \cite[Definition 3.14, Theorem 3.16]{BK}] 
 and  \cite[(2.34)]{Ma1}, 
we know that if $i: V_1\to V$ is a complex submanifold of $V$, 
when we consider the corresponding objects for the submersion $ f_1$,
each above term is the restriction of the corresponding term
for the global submersion $f$.
Thus let  $\Delta_1, \theta_1^k$ be the corresponding terms associated 
 to the fibration $f_1: W_1\to V_1$, then we have 
 $\Delta_1=i_{\mu_n}^*\Delta$ and 
 $\theta_1^k=i_{\mu_n}^*\theta^k$ ($k=1, 2, 3$),
 $\varrho_{1}=i_{\mu_n}^*\varrho$.
 So write $\Delta^0(f,\overline{E},\omega,\omega')=\theta^2-\varrho$ 
 and $\Delta_0(f,\overline{E},\omega,\omega')
 =\theta^1+\partial\theta^3- \varrho$, 
 we are done.
\end{proof}

\subsection{Functoriality of the equivariant analytic torsion forms} \label{Ma2.s2}
Let $W, V$ and $S$ be three $\mu_n$-equivariant projective
 complex manifolds with $S=S_{\mu_n}$. Suppose that $f: W\to V$ 
 and $h: V\to S$ are two holomorphic submersions with compact fibres
 $X,Y$.  Then $h\circ f$ is also a holomorphic submersion 
 with compact fibre $Z$.

Let $\omega^W$ and $\omega^V$ be two $\mu_n$-invariant K\"{a}hler 
forms on $W$ and on $V$. As usual, $\omega^W$ and $\omega^V$ 
decide K\"{a}hler fibration structures on the morphisms $f, h$ and $g$ 
and they induce $\mu_n$-invariant hermitian metrics
associated with the K\"ahler forms
 $\omega^X, \omega^Y$ and $\omega^Z$ 
on relative tangent bundles $Tf, Th$ and $T(h\circ f)$. 
Consider the following short exact sequence of hermitian vector bundles
$$\overline{T}(f,h,h\circ f):\quad 0\to \overline{Tf}\to
 \overline{T(h\circ f)}\to f^*\overline{Th}\to 0.$$
Denote by ${\rm Td}_g(\overline{T}(f,h,h\circ f))$
 the equivariant secondary Todd form, it satisfies the differential equation
$$\frac{\bar \partial \partial}{2\pi i}{\rm Td}_g(\overline{T}(f,h,h\circ f))
={\rm Td}_g(\overline{T(h\circ f)})-f_{\mu_n}^*{\rm Td}_g
(\overline{Th}){\rm Td}_g(\overline{Tf}).$$

Now let $\overline{E}$ be a hermitian vector bundle on $W$, 
we shall assume that $E$ is $f$-acyclic and $h\circ f$-acyclic. 
Then the Leray spectral sequence
$E_2^{i,j}=R^ih_*(R^jf_*E)$ degenerates at $E_2$ 
so that $f_*E=R^0f_*(E)$ is $h$-acyclic and 
 $(h\circ f)_*E\cong h_*f_*E$. Clearly, $(h\circ f)_*E$ and $h_*f_*E$ 
 carry in general different $L^2$ metrics
 (Note that  for $\sigma\in ((h\circ f)_*E)_b$, $b\in S$,
 \begin{align} \label{Ma2.4.2} \begin{split}
&	 \|\sigma\|_{(h\circ f)_*E}^2=(2\pi)^{-\dim Z}\int_{Z_b} |\sigma|^2 
   \frac{(\omega^Z)^{\dim Z}}{(\dim Z)!} , \\
 &  \|\sigma\|_{h_*f_*E}^2=(2\pi)^{-\dim Z}\int_{Y_b} 
  \Big( \int_{X} |\sigma|^2 
 \frac{(\omega^X)^{\dim X}}{(\dim X)!}\Big) 
 \frac{(\omega^Y)^{\dim Y}}{(\dim Y)!} ,
 \end{split}\end{align}
 thus they are different in general). 
   Consider the following short exact sequence of hermitian vector bundles
$$\overline{E}(f,h,h\circ f):\quad 0\to h_*f_*\overline{E} 
\to (h\circ f)_*\overline{E} \to 0\to 0.$$
The equivariant secondary Bott-Chern form 
$\widetilde{{\rm ch}}_g(\overline{E}(f,h,h\circ f))$ satisfies the differential equation
$$\frac{\bar \partial \partial}{2\pi i}\widetilde{{\rm ch}}_g
(\overline{E}(f,h,h\circ f))
={\rm ch}_g((h\circ f)_*\overline{E})-{\rm ch}_g(h_*f_*\overline{E}).$$

\begin{thm}\label{A20}
Let notations and assumptions be as above. 
Then the following identity holds in 
$\bigoplus_{p\geq 0}A^{p,p}(S)/{(\Im{\partial}+\Im\bar\partial)}$:
\begin{align} \label{Ma2.4.3} \begin{split}
T_g(h\circ f,\omega^W,h^E)-T_g(h,\omega^V,h^{f_*E})&
-\int_{V_{\mu_n}/S}{\rm Td}_g(\overline{Th})T_g(f,\omega^W,h^E)\\
=&\widetilde{{\rm ch}}_g(\overline{E}(f,h,h\circ f))
-\int_{W_{\mu_n}/S}\widetilde{{\rm Td}}_g(\overline{T}(f,h,h\circ f))
{\rm ch}_g(\overline{E}).
\end{split}\end{align}
\end{thm}
\begin{proof}(a sketch)
This is a natural extension of \cite[Th\'{e}or\`{e}me 3.5]{Ma2} 
to the equivariant case, or the family extension of 
\cite[Theorem 3.1]{Ma1} which is an equivariant 
extension of \cite[Theorem 3.1]{BerB}.
To prove this extension, one may
follow the same approach as \cite[Sections 4-9]{Ma2}.
In fact, as a purely functional analysis argument, 
the \cite[Theorems 4.5, 4.6 and 4.7]{Ma2} can be extended 
formally to the equivariant case by introduing in the right place 
the operator $g$. The reason one can do this formal extension 
has been given in \cite[Section 5]{Ma1}. For the equivariant 
extensions of \cite[Theorems 4.8, 4.9, 4.10 and 4.11]{Ma2}, 
one can show that their proofs are local on $f^{-1}(V_{\mu_n})$ 
and certain rescaling on Clifford variables which doesn't effect
the action of $g$ can be made (cf. \cite[Section 7 b)]{Ma2}). 
Replacing the equivariant local index technique in 
\cite[Sections 7, 8, and 9]{Ma1} by its equivariant relative local index,
one gets the desired identity.

To help the readers, we will use directly the notation in 
 \cite[Section 4]{Ma2}. By the anomaly formula Theorem \ref{207},
 we only need to establish Theorem \ref{A20} for a special coupe
 of K\"ahler forms, thus we will assume that
 $\omega^{W}=\widetilde{\omega}^{W}+ f^{*} \omega^{V}$
with $\widetilde{\omega}^{W}$ a K\"ahler form on $W$. 
 
 Let $\Delta$ be the rectangular domain 
 in $\R^2$ with coordinates $(u,T)$,
  defined by the four vertices $(1, \varepsilon), (T_0, \varepsilon)$, 
 $(T_0, A), (1,A)$, following \cite[(4.7)]{Ma2}, set 
\begin{align} \label{Ma2.4.7}
\begin{split}
\theta^0_1 &= (2\pi i)^{-1/2} \int_\Delta \frac{2}{u} \frac{\partial}{
\partial b} \left\{\varphi \mbox{Tr}_s \left[ g \left[ B'_{3,u^2,T},
N_{3,u^2,T}\right] \exp(-B^2_{3,u^2,T} - b
M_{3,u^2,T})\right]\right\}_{b=0} dudT, \\
\theta^0_2& = (2\pi i)^{-1/2} \int_\Delta  \frac{2}{u}  \frac{\partial}{
\partial b} \left\{\varphi \mbox{Tr}_s \left[ g \left[ B''_{3,u^2,T},
N_{3,u^2,T}\right] \exp(-B^2_{3,u^2,T} - b
M_{3,u^2,T})\right]\right\}_{b=0} dudT,   \\
\theta^0_3 &= (2\pi i)^{-1} \int_\Delta  \frac{2}{u}  \frac{\partial}{
\partial b} \left\{\varphi \mbox{Tr}_s \left[ g 
N_{3,u^2,T} \exp(-B^2_{3,u^2,T} - b
M_{3,u^2,T})\right]\right\}_{b=0} dudT .
\end{split}
\end{align}
The only difference  comparing with  \cite[(4.7)]{Ma2}
is that in \eqref{Ma2.4.7}, we add the operator $g$ as the first term
in ${\rm Tr}_s[\cdots]$ in \cite[(4.7)]{Ma2},
i.e., replace ${\rm Tr}_s[\cdots]$ by ${\rm Tr}_s[g \cdots]$.
Note that $B_{3,u^2,T}$ is the Bismut superconnection 
assocaited with the submersion $h\circ f$ and the form
$\omega^W_T= \frac{1}{T^{2}}
\widetilde{\omega}^{W}+ f^{*} \omega^{V}$, and 
$B'_{3,u^2,T}, B''_{3,u^2,T}$ are holomorphic and 
anti-holomorphic part of $B_{3,u^2,T}$.
Moreover $N_{3,u^2,T}$ is a generalized number operator 
associated with $\omega^W_T$. 

 The boundary of $\Delta$ composes as four oriented segments 
 $\Gamma_1,\cdots,\Gamma_4$.
 Let $I^0_k$ be the integral of the one form on $\R^2$ with values
 in $\Lambda^\bullet(T^*_\R S)$ defined by 
 replacing ${\rm Tr}_s[\cdots]$ by ${\rm Tr}_s[g \cdots]$ in
 \cite[Definition 4.2]{Ma2}, then we have the $g$-analogue of
  \cite[(4.8)]{Ma2}: 
 \begin{eqnarray}
\sum^4_{k=1} I^0_k = \overline \partial \theta^0_1 - \partial  \theta^0_2 -
\overline \partial \partial  \theta^0_3 .
\end{eqnarray}
We study the terms $I^0_k$ and $\theta^0_j$ in succession
 as $A\to +\infty$, $T_0\to +\infty$, $\varepsilon\to 0$ : 
 roughly, we get 
 
 $\bullet$ the term $-T_g(h,\omega^V,h^{f_*E})$ from $I^0_1$,
 
 $\bullet$ a differential form version of
 $- \widetilde{{\rm ch}}_g(\overline{E}(f,h,h\circ f))$
 (via \cite[(1.58)]{BGS} or \cite[(4.17)]{Ma1} by replacing  
 ${\rm Tr}_s[\cdots]$ by ${\rm Tr}_s[g \cdots]$)
 from $I^0_2$, 
 
 $\bullet$ $T_g(h\circ f,\omega^W,h^E)$ from  $I^0_3$, 
 
 $\bullet$ 
$-\int_{V_{\mu_n}/S}{\rm Td}_g(\overline{Th})T_g(f,\omega^W,h^E)
 + \int_{W_{\mu_n}/S} \widetilde{{\rm Td}}_g(\overline{T}(f,h,h\circ f))
{\rm ch}_g(\overline{E})$
(here we should use the differential form version of
$\widetilde{{\rm Td}}_g(\overline{T}(f,h,h\circ f))$ from 
the term $\int_{1}^{\infty}\cdots$  in  \cite[(4.72)]{BerB} by 
replacing ${\rm Td}$ therein by ${\rm Td}_g$)
from $I^0_4$.

Let $\theta_j^3 (j=1,2,3)$ be the differential forms on $S$ obtained 
from $\theta_j^0$ by the above procedure,  then 
the difference of two sides in \eqref{Ma2.4.3}
(by using the differential form versions of
$\widetilde{{\rm ch}}_g(\overline{E}(f,h,h\circ f))$ and
$\widetilde{{\rm Td}}_g(\overline{T}(f,h,h\circ f))$ as above)  is 
\begin{align}\label{Ma2.4.10}
	\Delta+ d^{S} \Theta=\bar{\partial}\theta_1^3-\partial\theta_2^3
-\bar{\partial}\partial\theta_3^3
\end{align}
and $\theta_k^3$ ($k=1, 2, 3$) have universal expressions 
via the Bismut superconnection $B_{3,u^2,T}$, 
$\Theta$ is a combination of local terms from the small time heat kernel 
asymptotics of the Bismut superconnection for the fibration $h$
and $h\circ f$, cf. 
 \cite[(2.24), (2.27)]{Ma1} and \cite[(4.27), (4.29)]{Ma2}.
\end{proof}

Let $\theta_k^3$ ($k=1, 2, 3$) be the form
in \eqref{Ma2.4.10} associated with the couple 
$\omega^{M}=\widetilde{\omega}^{W}+ f^{*}\omega^{V}$, $\omega^{V}$.
Set
\begin{align}\label{Ma2.4.11}
\Delta^0(f, h, \omega^{W}, \omega^{V}, \overline{E})
=-\theta_{2}^3 -\Theta
\quad \text{ and }
\Delta_0(f, h, \omega^W, \omega^V, \overline{E})
=\theta_{1}^3-\partial\theta_{3}^3 -\Theta.
\end{align}
Then when we fix the differential form versions of
$\widetilde{{\rm ch}}_g(\overline{E}(f,h,h\circ f))$ and
$\widetilde{{\rm Td}}_g(\overline{T}(f,h,h\circ f))$ as above, 
\eqref{Ma2.4.11}
measure the difference of the formula \eqref{Ma2.4.3}
at the differential form level from \eqref{Ma2.4.10}: 
\begin{multline}\label{Ma2.4.12}
\Delta=\partial\Delta^0(f, h, \omega^W, \omega^V, \overline{E})
+\bar{\partial}\Delta_0(f, h, \omega^W, \omega^V, \overline{E})\\
=T_g(h\circ f,\omega^W,h^E)-T_g(h,\omega^V,h^{f_*E})
-\int_{V_{\mu_n}/S}{\rm Td}_g(\overline{Th})T_g(f,\omega^W,h^E)\\
-\widetilde{{\rm ch}}_g(\overline{E}(f,h,h\circ f))
+\int_{W_{\mu_n}/S}\widetilde{{\rm Td}}_g(\overline{T}(f,h,h\circ f))
{\rm ch}_g(\overline{E}).
\end{multline}

Let $S_1$ be a closed submanifold of $S$, and let 
$V_1=h^{-1}(S_1)\subset V$ (resp. $W_1=(h\circ f)^{-1}(S_1)\subset W$) 
be the closed submanifold of $V$ (resp. $W$) with restricted 
K\"{a}hler metric. Then $f_1: W_1\to V_1$, 
$h_1: V_1\to S_1$ and $h_1\circ f_1: W_1\to S_1$
also form a triple of equivariant holomorphic submersions 
with compact fibres. 
Denote by $j$ (resp. $i$) the natural embedding $W_1\to W$ 
(resp. $V_1\to V$) and by $\omega^{W_1}, \omega^{V_1}$ 
the induced K\"{a}hler forms $j^*\omega^{W}, i^*\omega^{V}$.  
Denote by $l$ the embedding $S_1\to S$. 
Let $\overline{E}$ be an $f$-acyclic and $h\circ f$-acyclic hermitian 
bundle on $W$.

\begin{thm}\label{A2} 
The forms $\Delta^0(f, h, \omega^{W}, \omega^{V}, \overline{E})$ and 
$\Delta_0(f, h, \omega^W, \omega^V, \overline{E})$
are functorial in the following sense that
$$l^*\Delta^0(f, h, \omega^{W}, \omega^{V}, \overline{E})
=\Delta^0(f_1, h_1, \omega^{W_1},  \omega^{V_1}, j^*\overline{E})$$
and 
$$l^*\Delta_0(f, h, \omega^{W}, \omega^{V}, \overline{E})
=\Delta_0(f_1, h_1, \omega^{W_1},  \omega^{V_1}, j^*\overline{E}).$$
\end{thm}
\begin{proof}
Note that the square of the 
Bismut superconnection is a second order fiberwise elliptic operator 
with differential form coefficients
\cite[Theorem 3.6]{Bi86} (cf. also \cite[Theorem 10.17]{BGV}),
in particular, its heat kernel along the fibers is well-defined, 
and  in \eqref{Ma2.4.7},  the terms  $\left[B'_{3,u^2,T},
N_{3,u^2,T}\right]$,   $\left[B''_{3,u^2,T},
N_{3,u^2,T}\right]$ are first oder differential operators along the fiber,
the terms $N_{3,u^2,T}$, $M_{3,u^2,T}$  are tensors,
thus we see clearly that when we consider
the corresponding objects for the submersion $h_1\circ f_1$,
each above term is the restriction of the corresponding term
for the global submersion $h\circ f$.
 
We obtain that if 
$l: S_1\hookrightarrow S$ is a complex submanifold of $S$, 
and $\theta_{k,1}^0$,  $\Theta_{1}$ are the corresponding terms
associated to the relevant fibrations,  then we have 
\begin{align} \label{Ma2.4.9}
\Theta_{1}=l^*\Theta,\quad
\theta_{k,1}^0=l^*\theta_{k}^0\qquad (k=1, 2, 3). 
\end{align}
Now we make the  procedure as $A\to +\infty$, $T_0\to +\infty$, 
$\varepsilon\to 0$, to get $\theta_{k,1}^3$, 
then from \eqref{Ma2.4.9}, we get 
$\theta_{k,1}^3
=l^*\theta_{k}^3$ ($k=1, 2, 3$). 
Combining it with \eqref{Ma2.4.11}, we get Theorem \ref{A2}.
\end{proof}

Now for a general $\omega^{W}$, as we can use the anomaly 
formula for the trip
$(h\circ f, \omega^{W}, \omega^{W}+ f^{*}\omega^{V})$,
in particular, its differential form version as in 
Section \ref{Ma2.s1}, we can still define
$\Delta^0(f, h, \omega^W, \omega^V, \overline{E})$
and $\Delta_0(f, h, \omega^W, \omega^V, \overline{E})$
such that  Theorem \ref{A2} and \eqref{Ma2.4.12} still hold,
again we need to fix a differential form version of 
$\widetilde{{\rm Td}}_g(\overline{T}(f,h,h\circ f))$.

\subsection{Immersion formula}
Let $V, W$ be two $\mu_n$-equivariant projective complex manifolds 
and let $i: W\hookrightarrow V$ be an equivariant closed immersion. 
Let $S$ be a compact complex manifold with trivial $\mu_n$-action, 
and let $f: W\rightarrow S$, $l: V\rightarrow S$ be two equivariant 
holomorphic submersions with fibers $Y,X$ such that $f=l\circ i$. 
Assume that $\overline{\eta}$ is an equivariant hermitian bundle 
on $W$ and $(\overline{\xi}., v)$ is a complex of equivariant hermitian 
bundles on $V$ which provides a resolution of $i_*\overline{\eta}$
such that the metrics on $\xi.$ satisfy the Bismut's assumption (A).
Let $\omega^W$, $\omega^V$ be two K\"{a}hler fibrations on $f$
and on $l$ respectively. We shall assume that $\omega^W$ is the 
pull-back of
$\omega^V$ so that the K\"{a}hler metric on $W$ is induced 
by the K\"{a}hler metric on $V$. Consider the following exact sequence
$$\overline{\mathcal{N}}:\quad 0\to \overline{Tf}\to
\overline{Tl}\mid_W\to \overline{N}_{X/Y}\to 0$$
where $N_{X/Y}$ is endowed with the quotient metric.
Then the equivariant secondary Todd form of $\overline{\mathcal{N}}$ 
satisfies the identity
$$\frac{\bar \partial\partial}{2\pi i}
\widetilde{{\rm Td}}_g(\overline{\mathcal{N}})
={\rm Td}_g(Tl\mid_W,h^{Tl})-{\rm Td}_g(Tf,h^{Tf})
{\rm Td}_g(\overline{N}_{X/Y}).$$

We suppose that in the resolution $\xi.$,
$\xi_j$ are all $l-$acyclic and moreover $\eta$ is $f-$acyclic. 

Let $T_g(\omega^V, h^{\xi})$ be
the equivariant analytic torsion forms associated with 
the family of  relative Dolbeault double complexes 
$(\Omega(X,\xi |_{X}), \overline{\partial}^X +v)$.
Let $h^{H(X,\xi |_{X})}$ be the
corresponding $L_2$ metric on  the hypercohomology $H(X,\xi|_X)$
of $\xi|_X$. 

Note that under our assumption,
$H(X,\xi|_X)\simeq f_{*}\eta$. And we have the following
exact sequence of hermitian vector bundles on $S$
$$\overline{\Xi}:\quad 0\to l_*(\overline{\xi}_m)\to 
l_*(\overline{\xi}_{m-1})\to\ldots\to l_*(\overline{\xi}_0)
\to \overline{H(X,\xi|_X)}\to 0.$$
We can split $\overline{\Xi}.$ into a family of short exact sequence 
of hermitian bundles from $j=1$ to $m$
$$\xymatrix{\chi_j: \quad 0 \ar[r] & \Ker d_j \ar[r] &
\overline{\Xi}_j \ar[r]^-{d_j} & \Ker d_{j-1} \ar[r] & 0}$$
such that the kernel of every map $d_{j-1}$ for $j=2,\ldots,m$
carries the metric induced by $\overline{\Xi}_j$ and 
$\Ker d_0=\overline{\Xi}_0=\overline{H(X,\xi|_X)}, 
\Ker d_m=\overline{\Xi}_{m+1}=l_*(\overline{\xi}_m)$. 
We set $\widetilde{{\rm ch}}_g(\overline{\Xi}.)
=\sum_{j=0}^{m+1}(-1)^j\widetilde{{\rm ch}}_g(\chi_j)$. 
Then it satisfies the differential equation
$$\frac{\bar\partial\partial}{2\pi i}
\widetilde{{\rm ch}}_g(\overline{\Xi}.)
={\rm ch}_g(\overline{H(X,\xi|_X)})    
-\sum_{j=0}^m (-1)^{j} {\rm ch}_g(l_*(\overline{\xi}_j)).$$

The following result is the combination of \cite[Theorems 0.1 and 0.2]{BM}
which is an equivariant  extension of \cite[Theorems 0.1 and 0.2]{Bi},
and  a family extension of \cite[Theorem 0.1]{Bi95}, 
\cite[Theorem 0.1]{BiL}, 

Let $R_g$ be the equivariant R-genus of Bismut \cite{Bi94}.

\begin{thm}\label{A30}(Immersion formula)
The following identity holds in 
$\bigoplus_{p\geq0}A^{p,p}(S)/{(\Im\partial+\Im\bar\partial)}$.
\begin{align}
&T_g(\omega^V, h^{\xi}) -T_g(\omega^W,h^\eta)
+\widetilde{{\rm ch}}_g(f_*\eta, h^{H(X,\xi |_{X})}, 
h^{f_{*}\eta}) 
=-\int_{V_{\mu_n}/S}{\rm
Td}_g(\overline{Tl})T_g(\overline{\xi}.)\nonumber\\
&\label{BM.2.3} \hspace{0.5cm}  -\int_{W_{\mu_n}/S}
 \widetilde{{\rm Td}}_g(\overline{\mathcal{N}}){{\rm
Td}_g^{-1}(\overline{N}_{X/Y})}{\rm ch}_g(\overline{\eta})
+\int_{W_{\mu_n}/S}{\rm
Td}_g(\overline{Tf})R_g(\overline{N}_{X/Y}){\rm 
ch}_g(\overline{\eta}),\\
&\label{BM.2.4} T_g(\omega^V, h^{\xi})- \sum_{i=0}^m(-1)^iT_g(\omega^V,h^{\xi_i})
- \widetilde{{\rm ch}}_g(\overline{\Xi}.)=0.
\end{align}
\end{thm}

Again to understand \eqref{BM.2.3} at the differential form level,
 i.e., without modulo ${\Im\partial+\Im\bar\partial}$,
 then we need to fix first 
 $\widetilde{{\rm ch}}_g(f_*\eta, h^{H(X,\xi |_{X})}, 
h^{f_{*}\eta}) $
 and $\widetilde{{\rm Td}}_g(\overline{\mathcal{N}})$ as differential forms, 
and $T_g(\overline{\xi}.)$ as a current. 
The natural and nice way is that we use 
\cite[(7.33)]{Bi95} to replace 
$- \widetilde{{\rm Td}}_g(\overline{\mathcal{N}}){{\rm
Td}_g^{-1}(\overline{N}_{X/Y})} + {\rm
Td}_g(\overline{Tf})R_g(\overline{N}_{X/Y})$ by the differential 
form ${\bf B}_g(\overline{\mathcal{N}})$ in \cite[(7.24)]{Bi95}. 
Then we use the current $T_g(\overline{\xi}.)$ defined  in 
 \cite[(6.30)]{Bi95} and 
 $\widetilde{{\rm ch}}_g(f_*\eta, h^{H(X,\xi |_{X})}, 
h^{f_{*}\eta})$ as the integral  $\int_1^{+\infty}$ in  \cite[(3.24)]{BM}.

Let $\Delta^0(f,l,i_*\overline{\eta},\overline{\xi}.)$ and 
$\Delta_0(f,l,i_*\overline{\eta},\overline{\xi}.)$ 
be the differential forms such that
$$\Delta:=\partial\Delta^0(f,l,i_*\overline{\eta},\overline{\xi}.)
+\bar\partial\Delta_0(f,l,i_*\overline{\eta},\overline{\xi}.)$$ 
measures the difference
\begin{align*}
&T_g(\omega^V, h^{\xi}) -T_g(\omega^W,h^\eta)
+\widetilde{{\rm ch}}_g(f_*\eta, h^{H(X,\xi |_{X})}, 
h^{f_{*}\eta})\\
&+ \int_{V_{\mu_n}/S}{\rm
Td}_g(\overline{Tl})T_g(\overline{\xi}.)
+\int_{W_{\mu_n}/S} {\bf B}_g(\overline{\mathcal{N}})
{\rm ch}_g(\overline{\eta}).
\end{align*}
We claim that $\Delta^0(f,l,i_*\overline{\eta},\overline{\xi}.)$ 
and $\Delta_0(f,l,i_*\overline{\eta},\overline{\xi}.)$ 
can be written down explicitly and they admit certain functoriality.

Let $S_1$ be a closed submanifold of $S$, and let 
$W_1=f^{-1}(S_1)\subset W$ (resp. $V_1=l^{-1}(S_1)\subset V$) 
be the closed submanifold of $W$ (resp. $V$) with restricted 
K\"{a}hler metric. Then $i_1: W_1\to V_1$, $l_1: V_1\to S_1$
and $f_1:W_1\to S_1$ also form a triple of equivariant morphisms 
such that $f_1=l_1\circ i_1$. 
Denote by $j$ the embedding $S_1\to S$. 

\begin{thm}\label{A3}
There is a natural way to write down explicitly differential forms 
$\Delta^0(f, l, {i}_*\overline{\eta}, \overline{\xi}.)$ and 
$\Delta_0(f, l, {i}_*\overline{\eta}, \overline{\xi}.)$ such that  
$\Delta:=\partial\Delta^0(f,l,i_*\overline{\eta},\overline{\xi}.)
+\bar\partial\Delta_0(f,l,i_*\overline{\eta},\overline{\xi}.)$ 
and they are functorial in the following sense.
$$j^*\Delta^0(f, l, {i}_*\overline{\eta}, \overline{\xi}.)
=\Delta^0(f_{1},l_{1},{i_{1}}_*\overline{\eta}\mid_{W_1},
\overline{\xi}.\mid_{V_1})$$ and 
$$j^*\Delta_0(f, l, {i}_*\overline{\eta}, \overline{\xi}.)
=\Delta_0(f_{1},l_{1},{i_{1}}_*\overline{\eta}\mid_{W_1},
\overline{\xi}.\mid_{V_1}).$$
\end{thm}
\begin{proof}
By the equivariant extension of 
\cite[(6.109), (6.110), (6.158), (6.170)]{Bi}
in \cite[Definition 3.4]{BM}, 
there exist universal smooth forms $\gamma^3, \delta^3$
on $S$ such that 
$$\Delta+ d^{S}\beta=\bar{\partial}\gamma^3+\partial\delta^3.$$
Again $\beta$ is a combination of local terms from the small time 
 heat kernel 
asymptotics of the Bismut superconnection for the fibration $h$
and $h\circ f$, cf. \cite[Theorem 6.4, (6.36), (6.55)]{Bi} and
 \cite[(2.24), (2.27)]{Ma1}.
More precisely, before we make  the  procedure 
as $A\to +\infty$, $T_0\to +\infty$, 
$\varepsilon\to 0$,  the forms $\gamma, \delta$ 
defined in \cite[(3.13)]{BM}  are
double integrals of certain supertrace
of the heat kernel of the square of Bismut superconnection
as in  \eqref{Ma2.4.7}. 
Note that  the square of the Bismut superconnection is 
a second order fiberwise elliptic operator with differential form 
coefficients and when we consider
the corresponding objects for the submersion $l_1$,
each above term is the restriction of the corresponding term
for the global submersion $l$, thus 
if $\Delta_1, \gamma_1^3, \delta_1^3$, $\beta_{1}$
are corresponding terms associated to the relevant fibrations 
$i_1, l_1$ and $f_1$, we have
$$\Delta_1=j^*\Delta, \gamma_1^3=j^*\gamma^3, 
\delta_1^3=j^*\delta^3, \beta_1=j^*\beta,$$
So write $\Delta^0(f, l, {i}_*\overline{\eta}, 
\overline{\xi}.)=\gamma^3-\beta$ 
and $\Delta_0(f, l, {i}_*\overline{\eta}, \overline{\xi}.)
= \delta^3-\beta$, we are done.
\end{proof}

We can do the same analysis for \eqref{BM.2.4}.

Note that we can relax our condition on $f: V\to S$ as follows:
$S$ is a (possible noncompact) complex manifold
and $f: V\to S$ is a  K\"ahler fibration in the sense of 
Bismut-Gillet-Soul\'e \cite[Definition 1.4]{BGS2}.

\hspace{5cm} \hrulefill\hspace{5.5cm}

Shun Tang

Beijing Advanced Innovation Center for Imaging Theory and Technology

Academy for Multidisciplinary Studies, Capital Normal University

School of Mathematical Sciences, Capital Normal University

West 3rd Ring North Road 105, 100048 Beijing, P. R. China

E-mail: shun.tang@outlook.com
\\

Xiaonan Ma

Universit\'{e} Paris Diderot - Paris 7

UFR de Math\'{e}matiques, Case 7012

75205 Paris Cedex 13, France

E-mail: xiaonan.ma@imj-prg.fr

\end{document}